\documentclass[final,leqno]{siamltex704}

\usepackage{amsmath,amssymb}% for \begin{bmatrix}, \begin{smallmatrix}, \begin{align} etc.
\usepackage{amsfonts}% for \mathbb{} etc. `blackboard' for uppercase only.
\usepackage{exscale}
\usepackage{graphicx}
\usepackage[pdftex]{thumbpdf}      %%% thumbnails for pdflatex. Run thumbpdf at the end
\usepackage[pdftex,                %%% hyper-references for pdflatex
pdfstartview=FitH,%
bookmarks=true,%                   %%% generate bookmarks ...
bookmarksnumbered=true,%           %%% ... with numbers
bookmarksopen=true,%
hypertexnames=false,%              %%% needed for correct links to figures !!!
breaklinks=true,%                  %%% break links if exceeding a single line
  colorlinks=true,%
  linkcolor=blue,anchorcolor=blue,%
  citecolor=blue,filecolor=blue,%
  menucolor=blue]%
{hyperref} 
\usepackage{etoolbox}  % if then statement to get longer version for tech. report
\newtoggle{report}
\toggletrue{report} % set longer tech report
%\togglefalse{report} % set shorter version for journal

\makeatletter
\def\@cite#1#2{{\rm [}{{\rm#1}\if@tempswa , #2\fi}{\rm ]}}
\makeatother

\newtheorem{algorithm}{Algorithm}[section]

\title{Absolute value preconditioning for symmetric indefinite linear systems
\thanks{%
%published electronically  
the current version generated \today.
%Preliminary posted at \url{http://arxiv.org/abs/1104.4530}.
%See also \url{http://arxiv.org/abs/math/}. 
%\URL simax/?/?.html
This material is based upon work partially supported by the National Science Foundation under Grant No.~1115734. 
%The results presented in this 
The work is partially based on the PhD
thesis of the first coauthor~\cite{thesis}.
%, defended at the University of Colorado Denver
%under the supervision of the second coauthor. 
}}

%\author{Eugene Vecharynski
%\thanks{
%Department of Computer Science and
%Engineering, University of Minnesota, 200 Union Street S.E., Minneapolis, MN 55455
%(eugenev@cs.umn.edu).
%Computational Research Division, Lawrence Berkeley National Laboratory, Berkeley, CA 94720
%(eugene.vecharynski@gmail.com).} \and Andrew Knyazev
%\thanks{Department of Mathematical and
%Statistical Sciences, University of Colorado Denver, P.O.~Box
%173364, Campus Box 170, Denver, CO 80217-3364
%(andrew.knyazev@ucdenver.edu).}
%}
\author{%
Eugene Vecharynski\footnotemark[2] 
\and
Andrew V. Knyazev\footnotemark[3]\ \footnotemark[4]\ \footnotemark[5]
}

\begin{document}
%siam_id=72574
%CODEN=SJMAEL
%\slugger{sisc}{?}{?}{?}{?--?}
\maketitle

\setcounter{page}{1}

\renewcommand{\thefootnote}{\fnsymbol{footnote}}
\footnotetext[2]{
%Department of Computer Science and
%Engineering, University of Minnesota, 200 Union Street S.E., Minneapolis, MN 55455
%(eugenev[at]cs.umn.edu).
Computational Research Division, Lawrence Berkeley National Laboratory, Berkeley, CA 94720
(eugene.vecharynski@gmail.com)
}
\footnotetext[3]{Department of Mathematical and Statistical Sciences; 
University of Colorado Denver,
P.O. Box 173364, Campus Box 170, Denver, CO 80217-3364, USA
(andrew.knyazev[at]ucdenver.edu)}
\footnotetext[4]{Mitsubishi Electric Research Laboratories; 201 Broadway
Cambridge, MA 02139}
\footnotetext[5]{
\url{http://www.merl.com/people/?user=knyazev} and \url{http://math.ucdenver.edu/~aknyazev/}
%\url{http://math.ucdenver.edu/~aknyazev/}
}
\renewcommand{\thefootnote}{\arabic{footnote}}

\begin{abstract}
We introduce a novel strategy for constructing symmetric positive
definite (SPD) preconditioners for linear systems with symmetric
indefinite matrices. 
The strategy,  called absolute value preconditioning, is motivated by the
observation that 
the preconditioned minimal residual method
with the inverse of
the absolute value of the matrix as a preconditioner
converges to the exact solution of the system in at most two steps.
Neither the exact absolute value of the matrix nor its
exact inverse are computationally feasible to construct in general.
However, 
we provide a practical example of
an SPD preconditioner that is based on the suggested approach.
In this example we consider a
model problem with a shifted discrete negative Laplacian, and suggest a
geometric multigrid (MG) preconditioner, where the inverse of the matrix absolute
value appears only on the coarse grid, while
operations on finer grids are based on the Laplacian.
Our numerical tests demonstrate
practical effectiveness of the new MG preconditioner, which leads to a 
robust iterative scheme with minimalist memory requirements.
% for
%moderately small shifts.
\end{abstract}

\begin{keywords}
Preconditioning, linear system, preconditioned minimal residual method, 
polar decomposition, matrix absolute value, multigrid,
polynomial filtering
%, Chebyshev polynomials
\end{keywords}

\begin{AMS}
15A06, 65F08, 65F10, 65N22, 65N55 
\end{AMS}
% 15A06 -- Linear equations 
% 65F08 -- Preconditioners for iterative methods
% 65F10 -- Iterative methods for linear systems
% 65N22 -- Solution of discretized equations
% 65N55 -- Multigrid methods; domain decomposition

%\begin{DOI}
% 
%\end{DOI}

\pagestyle{myheadings}
\thispagestyle{plain}
\markboth{EUGENE VECHARYNSKI AND ANDREW V. KNYAZEV}{ABSOLUTE VALUE PRECONDITIONING} %50 Characters Limit

\section{Introduction}\label{sec:intro}

%For a relatively small problem size $n$, the \textit{exact} (up to the effects of round-off errors) 
%solution of~(\ref{eqn:sys}) can be
%efficiently found using a \textit{direct} method; see, e.g., the survey in~\cite{Bunch.Parlett:71}. 
%However, in most cases, the computational cost of such a method does not \textit{optimally} scale with respect
%to $n \rightarrow \infty$, i.e., the amount of involved computations does not 
%grow \textit{proportionally} to 
%%linearly depend on
%the increasing number of matrix elements.
%Therefore, if the problem size is significantly large,
%the application of a direct method
%may become infeasible. 
%Additionally, it is often required to find
%only an \textit{approximate} solution of~(\ref{eqn:sys}), as opposed
%to the \textit{exact} solution targeted by a direct solver.   

%The above arguments motivate an \textit{iterative}
%technique, which
%can be expected to provide a (nearly) optimal complexity and, 
%instead of the exact solution of~(\ref{eqn:sys}), 
%%allows to find 
%delivers a sequence 
%of approximations. 
%%In this manuscript, it is
%%assumed 
%Here we assume
%that $A$  
%is extremely large and possibly sparse. 
%We focus only on \textit{iterative} methods for solving linear system~(\ref{eqn:sys}).
%Let us remark that, in the suggested framework,
%under ``solving a linear system'' we understand ``\textit{approximately} solving a linear system,'' i.e., 
%finding a satisfactory approximation
%to the exact solution of~(\ref{eqn:sys}).
%
%

Large, sparse, symmetric, and indefinite systems 
arise in a variety of applications.
For example, in the form of saddle point problems, such systems result from mixed finite element
discretizations of underlying differential equations of fluid and solid mechanics; see, e.g.,~\cite{Benzi.Golub.Liesen:05}
and references therein. 
In acoustics,
large sparse symmetric indefinite systems are obtained after
discretizing the Helmholtz equation for certain media types and boundary conditions. 
%; see, e.g.,~\cite{Tikhonov.Samarskii:77}.
Often the need to solve symmetric indefinite problems comes as an auxiliary
task within other computational routines, 
such as the  
inner 
%Newton 
step in interior point methods in linear and nonlinear 
optimization~\cite{Benzi.Golub.Liesen:05, Nocedal.Wright:99}, 
or solution of the correction equation in 
the Jacobi-Davidson method~\cite{Sleijpen.Vorst:96} for a symmetric eigenvalue problem.  

%We consider a system of linear equations $Ax = b$,
We consider an iterative solution of a linear system $Ax = b$,
where the matrix $A$ is real nonsingular and symmetric indefinite, 
i.e., the spectrum of $A$ contains both positive and negative eigenvalues.
%\iftoggle{report}{
%There is a number of iterative methods developed \textit{specifically} to solve
%such systems, ranging from modifications of Richardson's iteration, 
%e.g.,~\cite{Calvetti.Reichel:96,Lebedev:69,Saad:83}, to the optimal Krylov subspace minimal residual method
%delivered through short-term recurrent schemes, 
%such as the MINRES algorithm~\cite{Paige.Saunders:75}. 
%In practical problems, the matrix $A$
%can be very ill-conditioned. Along with the location of the spectrum %of $A$ 
%on both sides of the origin, this makes the straightforward application 
%of existing techniques inefficient because of extremely slow convergence.
%} 
%
In order to improve the convergence, we introduce a \textit{preconditioner} $T$ and formally
replace $Ax = b$ by
%\begin{equation}\label{eqn:prec_sys}
the \textit{preconditioned system} $TAx  = Tb$.
%\end{equation} 
If $T$ is properly chosen, an iterative method for
%~(\ref{eqn:prec_sys}) 
this system
can exhibit
a better convergence behavior compared to 
a
%the original 
scheme applied to $Ax = b$.  
Neither the preconditioner $T$ 
%itself 
nor the \textit{preconditioned matrix} $TA$ is normally explicitly computed.

If $T$ is \textit{not} symmetric positive definite~(SPD), then $TA$, in general, 
is not symmetric with respect to any inner
product~\cite[Theorem~15.2.1]{Parlett:98}. 
Thus, the introduction of a non-SPD preconditioner replaces the 
original \textit{symmetric} problem $Ax=b$ by 
a generally \textit{nonsymmetric} $TAx=Tb$.
Specialized methods for \textit{symmetric} 
linear systems are
no longer applicable to the preconditioned problem, 
and must be replaced by
%which suggests that, instead, 
iterative schemes for \textit{nonsymmetric} linear systems;
%should be employed, 
e.g.,
GMRES or GMRES($m$)~\cite{Saad.Schultz:86}, Bi-CGSTAB~\cite{Vorst:92}, and QMR~\cite{Freund.Nachtigal:91}.

The approach
based on the choice of a non-SPD preconditioner, which leads to solving a nonsymmetric
problem, has several disadvantages.
First, no short-term recurrent scheme that delivers 
an \textit{optimal} Krylov subspace method 
is typically available for
a nonsymmetric linear system~\cite{Faber.Manteuffel:84}.
In practice, this means that implementations of the optimal methods (e.g., GMRES) 
require an increasing amount of work and storage at every new step, and hence 
are often computationally expensive.  
%often resulting in an unsatisfactory performance of the corresponding solver. 
%First, in order to maintain the optimality of a Krylov subspace method, one 
%has to allow the increase of the computational work at every new iteration, which can become prohibitive
%for large problems. 

Second, the convergence behavior of iterative
methods for nonsymmetric linear systems 
%such methods
is not completely understood.
In particular, the convergence %of these methods 
may not be characterized in terms of reasonably accessible 
quantities, such as the 
%(estimated) 
spectrum of the preconditioned matrix; see 
the corresponding results for GMRES and GMRES($m$) in~\cite{Greenbaum.Ptak.Strakos:96,Ve.La:10}.
This makes it difficult to predict computational costs.

If $T$ is chosen to be SPD, i.e., $T = T^* > 0$, then the matrix $TA$
of the preconditioned linear system is symmetric with respect to the $T^{-1}$--inner product
defined by $(u,v)_{T^{-1}} = (u, T^{-1} v)$ for any pair of vectors $u$ and $v$.
% \in \mathbb{R}^n$. 
Here $(\cdot,\cdot)$ denotes the Euclidean inner product $(u,v) = v^* u$, in which the matrices $A$ and $T$ are symmetric. 
%In particular, 
Due to this symmetry preservation, system $TAx = Tb$ can be solved using an \textit{optimal} Krylov subspace method
that admits a \textit{short-term recurrent} implementation, such as preconditioned MINRES (PMINRES)~\cite{Elman.Silvester.Wathen:05, Paige.Saunders:75}. 
Moreover, the convergence of the method can be
fully estimated in terms of the spectrum of $TA$.
%, e.g., preconditioned MINRES \cite{Paige.Saunders:75}, or PMINRES, 
%with the convergence behavior fully described in terms of the (estimates of) spectrum of the preconditioned matrix $TA$.

%Therefore, 
In light of the above discussion,
the choice of an SPD preconditioner for a symmetric indefinite linear system 
can be regarded as natural and favorable, especially if corresponding non-SPD preconditioning strategies fail
to provide convergence in a small number of iterations. 
We advocate the use of SPD preconditioning.

The question of constructing SPD preconditioners for symmetric indefinite
systems has been widely studied in many applications.
For saddle point problems, the block-diagonal SPD preconditioning has been addressed, e.g.,
in~\cite{Fischer.Ramage.Silvester.Wathen:98, Silvester.Wathen:94, Wathen.Silvester:93}.
%We are aware of only a few works that discuss SPD peconditioning
%for symmetric indefinite systems.  
%
In~\cite{Bayliss.Goldstein.Turkel:83}, it was proposed
to use an inverse of the negative Laplacian
as an SPD preconditioner for indefinite Helmholtz problems. 
This approach was further extended in~\cite{Laird:Giles:02}
by introducing a shift into the preconditioner. 
Another strategy was suggested in~\cite{Gill.Murray.Ponceleon.Saunders:92},
primarily in the context of linear systems arising in optimization. It is based on  
the so-called \textit{Bunch-Parlett factorization}~\cite{Bunch.Parlett:71}.

We introduce here a different idea of constructing SPD preconditioners
that resemble the inverse of the absolute value of the coefficient matrix.    
Throughout, the absolute value of $A$ is defined as a matrix function
$\left| A \right|  = V \left| \Lambda \right| V^*$, where $A = V \Lambda V^*$
is the eigenvalue decomposition of $A$. 
%and $\left| \Lambda \right| = \mbox{diag}\{\left|\lambda_j\right|\}$.
% 
%%$A \in \mathbb{R}^{n \times n}$ be a symmetric matrix with eigendecomposition $A = V \Lambda V^*$, where $V$ is an
%$A$ have an eigendecomposition $A = V \Lambda V^*$, where $V$ is an
%orthogonal matrix of eigenvectors, and $\Lambda = \mbox{diag}\{\lambda_j\}$, $j = 1, \ldots, n$, is 
%a diagonal matrix of eigenvalues of $A$. We consider the factorization
%\begin{equation}\label{eqn:absval}
%A = \left| A \right| \mbox{sign}(A) = \mbox{sign}(A) \left| A \right|,
%\end{equation}
%where $\left| A \right|  = V \left| \Lambda \right| V^*$ is a matrix absolute value of $A$, %(matrix absolute value),
%and $\mbox{sign}(A) =  V  \mbox{sign}(\Lambda) V^*$ is a matrix sign of $A$. % (matrix sign);
%%$\left| \Lambda \right| = \mbox{diag}\{\left|\lambda_j\right|\}$, 
%%$\mbox{sign}(\Lambda) = \mbox{diag}\{\mbox{sign}(\lambda_j)\}$. 
%Factorization (\ref{eqn:absval}) is, in fact, a \textit{polar decomposition}, see, e.g., \cite{Horn.Johnson:90}, of 
%the symmetric matrix $A$, 
%with the symmetric positive (semi) definite factor $\left| A \right|$ and the orthogonal factor $\mbox{sign}(A)$.
%If, additionally, $A$ is nonsingular, then $\left| A \right|$ is SPD.
%
We are motivated by the observation that 
%a preconditioned iterative method 
PMINRES 
% minimal residual method
with 
%the inverse of the absolute value of the matrix 
$|A|^{-1}$ as a preconditioner
converges to the exact solution in at most two steps.
We refer to the new approach as the \textit{absolute value} (AV) preconditioning and call the 
corresponding preconditioners the AV preconditioners.

The direct approach for constructing an AV preconditioner 
is to approximately solve $\left|A\right| z = r$.
However, 
$\left|A \right|$ is generally not available,
which makes the application of standard 
% matrix preconditioning 
techniques, such as, e.g., incomplete factorizations, approximate inverses, 
%(see~\cite{Benzi:02}) 
problematic. 
The vector $\left|A\right|^{-1} r$ can also be found using matrix function computations,
normally fulfilled by a Krylov subspace method~\cite{Golub.VanLoan:96,Higham:08}
or a polynomial approximation~\cite{Powell:81, Rivlin:81}. 
Our numerical experience shows that the convergence, with respect to the outer iterations, 
of a linear solver can be significantly improved with this approach, but
the computational costs of approximating $f(A) r = \left|A\right|^{-1} r$ may be too high,
i.e., much higher than 
%We recall that the costs of the construction and application of $T$ should preferably be similar to 
the cost of matrix-vector multiplication with $A$.

%Another approach for constructing an AV preconditioner is to 
%adapt standard matrix preconditioning techniques such as 
%%e.g.,\ strategies based on 
%incomplete factorizations, approximate inverses, etc.; see the survey in~\cite{Benzi:02}. 
%The relation between $A$ and $|A|$, which may be useful in this respect, is 
%$|A| = A -  2V_p \Lambda_p V_p^*,$ 
%where $V_p$ is the matrix of eigenvectors of $A$ corresponding to
%all $p$ negative eigenvalues and $\Lambda_p = \mbox{diag}\left\{ \lambda_1, \ldots, \lambda_p \right\}$. 

Introduction of the \textit{general concept} of the AV preconditioning is the main theoretical
contribution of the present work. As a proof of concept example of the AV preconditioning, 
we use a geometric multigrid (MG) framework. To investigate
applicability and practical effectiveness of the proposed idea, 
%for a particular computational problem, 
we choose a model problem resulting from discretization of
a shifted Laplacian (Helmholtz operator) on a unit square with Dirichlet boundary conditions.
The obtained linear system is real symmetric indefinite. 
We construct an MG AV preconditioner that, used in the 
PMINRES iteration, delivers an efficient computational scheme.   

Let us remark that the same model problem has been considered in~\cite{Bramble.Leyk.Pasciak:93},
%In this work, 
where the authors utilize the coarse grid approximation to reduce the indefinite problem to the SPD system.
%which can be solved by the conjugate gradient method. 
Satisfactory results have been reported 
for small shifts, i.e., for slightly indefinite systems. 
However, the limitation of the approach lies in the requirement on
the size of the coarse space, which should be chosen sufficiently large.  
As we show below, the MG AV preconditioner presented in this paper allows keeping
the coarsest problem reasonably small, even if the shift is large. 
%This means that 
%the cost of the new AV preconditioner is not affected much by the coarsest grid 
%operations. 

Numerical solution of Helmholtz problems is an object of active research; see, 
e.g.,~\cite{Airaksinen.Heikkola.Pennanen:07, Bollhoefer.Grote.Schenk:09, Erlangga.Vuik.Ooster:04,Elman.Ernst.OLeary:01, 
Haber.MacLachlan:11, Osei.Saad:10, vanGijzen.Erlangga.Vuik:07}.
A typical Helmholtz problem is approximated by a complex symmetric (non-Hermitian) system. 
%The real symmetric case of the Helmholtz equation, considered in this paper, is uncommon, in practice.  
The real symmetric case of the Helmholtz equation, considered in this paper, is less common. 
However, methods for complex problems are evidently applicable to our particular real case, which allows us 
to make numerical comparisons with known Helmholtz solvers. 

We test several of solvers, based on the inverted Laplacian and the 
standard MG preconditioning, to compare with the proposed AV preconditioning.    
In fact, the inverted (shifted) Laplacian  preconditioning~\cite{Bayliss.Goldstein.Turkel:83, Laird:Giles:02}
for real Helmholtz problems can be viewed as a special case of our AV
preconditioning. 
In contrast to preconditioners in~\cite{Gill.Murray.Ponceleon.Saunders:92} relying
on the Bunch-Parlett factorization, we show that the AV preconditioners
can be constructed without any decompositions of the matrix, 
which is crucial for very large or matrix-free problems.

This paper is organized as follows.
In Section~\ref{sec:absval_prec}, we present and justify the general notion of an
AV preconditioner.
The rest of the paper deals with the question of whether 
AV preconditioners can be
efficiently constructed in practice.
In Section~\ref{subsec:prec_constr}, we give a positive answer by constructing an example of 
a geometric MG AV
preconditioner for the model problem.
% problem resulting from discretization of
%a shifted negative Laplace operator on a unit square. 
%
The efficiency of this preconditioner is demonstrated in our numerical tests
in Section~\ref{sec:numeric}.
We conclude in Section~\ref{sec:conl}.
%In particular, we show that for highly indefinite systems, 
%MINRES with the new MG AV preconditioner outperforms 
%conventional schemes based on %restarted GMRES with the 
%indefinite MG preconditioning.  
%%We report mesh-independent convergence behavior for PMINRES 
%%preconditioned with the new scheme. 

%Throughout, we assume exact arithmetic. 
%The choice of real vector spaces
%has been made to simplify the presentation.
%Generalization of the results to the complex case is straightforward.
%We note that the results presented in this work are partially based on the PhD
%thesis of the first coauthor~\cite{thesis}, defended at the University of Colorado Denver
%under the supervision of the second coauthor. 

\section{AV preconditioning for symmetric indefinite systems}\label{sec:absval_prec}

Given an SPD preconditioner $T$, 
we consider
solving a linear system with 
the \textit{preconditioned minimal residual method}, implemented in the form of the
preconditioned MINRES (PMINRES) algorithm~\cite{Elman.Silvester.Wathen:05, Paige.Saunders:75}.
In the absence of round-off errors, at step $i$, the method   
constructs an approximation 
$x^{(i)}$ to the solution of $Ax = b$ of the form
\begin{equation}\label{eqn:xkrylov}
x^{(i)}  \in  x^{(0)} + \mathcal{K}_i \left(T A,T r^{(0)}\right), 
%\ \mathcal{K}_i \left(TA,Tr^{(0)}\right)  =  \mbox{span}\left\{Tr^{(0)}, (TA)Tr^{(0)},\ldots,(TA)^{i-1}Tr^{(0)}\right\},
\end{equation}
such that the residual vector $r^{(i)} = b - A x^{(i)}$ satisfies the optimality condition
\begin{equation}\label{eqn:rkrylov}
\|r^{(i)}\|_T = \min_{u \in A \mathcal{K}_i \left(TA,Tr^{(0)}\right)} \| r^{(0)} - u\|_T.
\end{equation}
Here,  
$\mathcal{K}_i \left(TA,Tr^{(0)}\right) = \mbox{span}\left\{Tr^{(0)}, (TA)Tr^{(0)},\ldots,(TA)^{i-1}Tr^{(0)}\right\}$
is the %$i$-dimensional 
Krylov subspace generated by the matrix $TA$ and the vector $Tr^{(0)}$,  
%$A \mathcal{K}_i \left(TA,Tr^{(0)}\right) = \mbox{span}\left\{(AT)r^{(0)}, \ldots,(AT)^{i}r^{(0)}\right\}$
%is the corresponding Krylov residual subspace; 
the $T$-norm is defined by $\|v\|^2_T = (v,v)_T$ for any $v$, and $x^{(0)}$ is the
initial guess.
Scheme~(\ref{eqn:xkrylov})--(\ref{eqn:rkrylov}) represents an \textit{optimal} Krylov subspace method
and the PMINRES implementation is based on a \textit{short-term recurrence}.   
%The PMINRES implementation of the optimal preconditioned minimal residual method~(\ref{eqn:xkrylov})--(\ref{eqn:rkrylov}) 
%is based on a \textit{short-term recurrence}. 
%It can be viewed as the MINRES algorithm applied to~(\ref{eqn:prec_sys})
%with Euclidean inner products replaced by $T^{-1}$--based inner products,
%written in a way to avoid computations involving $T^{-1}$.
The conventional convergence rate bound for~(\ref{eqn:xkrylov})--(\ref{eqn:rkrylov}) can be found, e.g., 
in~\cite{Elman.Silvester.Wathen:05}, and relies solely on the distribution of eigenvalues of $TA$.

%Let 
%%$A \in \mathbb{R}^{n \times n}$ be a symmetric matrix with eigendecomposition $A = V \Lambda V^*$, where $V$ is an
%$A$ have an eigendecomposition $A = V \Lambda V^*$, where $V$ is an
%orthogonal matrix of eigenvectors, and $\Lambda = \mbox{diag}\{\lambda_j\}$, $j = 1, \ldots, n$, is 
%a diagonal matrix of eigenvalues of $A$. We consider the factorization
%\begin{equation}\label{eqn:absval}
%A = \left| A \right| \mbox{sign}(A) = \mbox{sign}(A) \left| A \right|,
%\end{equation}
%where $\left| A \right|  = V \left| \Lambda \right| V^*$ is a matrix absolute value of $A$, %(matrix absolute value),
%and $\mbox{sign}(A) =  V  \mbox{sign}(\Lambda) V^*$ is a matrix sign of $A$. % (matrix sign);
%%$\left| \Lambda \right| = \mbox{diag}\{\left|\lambda_j\right|\}$, 
%%$\mbox{sign}(\Lambda) = \mbox{diag}\{\mbox{sign}(\lambda_j)\}$. 
%Factorization (\ref{eqn:absval}) is, in fact, a \textit{polar decomposition}, see, e.g., \cite{Horn.Johnson:90}, of 
%the symmetric matrix $A$, 
%with the symmetric positive (semi) definite factor $\left| A \right|$ and the orthogonal factor $\mbox{sign}(A)$.
%If, additionally, $A$ is nonsingular, then $\left| A \right|$ is SPD.

The following trivial, but important, theorem regards 
%the inverted absolute value of $A$, i.e., 
$\left| A \right|^{-1}$
as an SPD preconditioner for a symmetric indefinite system. %~(\ref{eqn:sys}).
\begin{theorem}\label{thm:opt_prec} 
The preconditioned minimal residual method~(\ref{eqn:xkrylov})--(\ref{eqn:rkrylov}) 
with preconditioner $T  = \left|A\right|^{-1}$
converges to the 
%exact 
solution of $Ax = b$ in at most two steps. 
\end{theorem}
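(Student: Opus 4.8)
The plan is to show that the preconditioned matrix $TA$ with $T = |A|^{-1}$ has only two distinct eigenvalues, namely $+1$ and $-1$, and then invoke the standard fact that an optimal Krylov subspace method such as PMINRES terminates (in exact arithmetic) in at most as many steps as there are distinct eigenvalues of the preconditioned operator. First I would write $A = V\Lambda V^*$ for the eigendecomposition, so that $|A| = V|\Lambda|V^*$ and $|A|^{-1} = V|\Lambda|^{-1}V^*$. Then $TA = |A|^{-1}A = V|\Lambda|^{-1}\Lambda V^* = V\,\mathrm{sign}(\Lambda)\,V^*$, where $\mathrm{sign}(\Lambda)$ is diagonal with entries $\pm 1$ according to the signs of the eigenvalues of $A$. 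Since $A$ is indefinite, both signs actually occur, so $TA$ has exactly the two eigenvalues $1$ and $-1$; in particular $(TA)^2 = I$.

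Next I would connect this to convergence of the iteration~(\ref{eqn:xkrylov})--(\ref{eqn:rkrylov}). The cleanest route is the polynomial-in-$TA$ characterization: the error (equivalently, the residual in the relevant norm) after $i$ steps equals $p(TA)$ applied to the initial error/residual, minimized over polynomials $p$ of degree at most $i$ with $p(0)=1$ in the appropriate $T$-based norm. Because $TA$ has spectrum $\{-1,1\}$, the degree-two polynomial $p(t) = 1 - t^2$ satisfies $p(0) = 1$ and $p(\pm 1) = 0$, hence $p(TA) = I - (TA)^2 = 0$ on the whole space. Therefore $\|r^{(2)}\|_T = 0$, i.e.\ $x^{(2)}$ is the exact solution, and the method converges in at most two steps. (If $A$ happened to be definite only one of the two values would appear and one step would suffice, but that case is excluded by hypothesis.)

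There is essentially no hard part here — the result is labeled ``trivial'' for good reason — but the one point that needs a little care is making the reduction to ``number of distinct eigenvalues'' rigorous in the preconditioned setting. Since $T = |A|^{-1}$ is SPD, $TA$ is self-adjoint in the $T^{-1}$-inner product (as recalled in the text), so it has a full set of $T^{-1}$-orthonormal eigenvectors and the min-residual polynomial bound~(\ref{eqn:rkrylov}) really does reduce to a scalar polynomial approximation problem on the spectrum $\{-1,1\}$. One should also note the degenerate possibility that $Tr^{(0)}$ lies entirely in a single eigenspace of $TA$, in which case the Krylov space is one-dimensional and termination occurs in one step; the bound ``at most two steps'' covers this automatically. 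Apart from these bookkeeping remarks, the proof is just the computation $TA = V\,\mathrm{sign}(\Lambda)\,V^*$ together with $(TA)^2 = I$.
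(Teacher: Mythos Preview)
Your proposal is correct and is exactly the argument the paper has in mind. The paper actually gives no explicit proof of this theorem (it is introduced as ``trivial, but important''), but the remark immediately following it---that the result ``holds not only for the preconditioned minimal residual method~(\ref{eqn:xkrylov})--(\ref{eqn:rkrylov}), but for all methods where convergence is determined by the degree of the minimal polynomial of $TA$''---confirms that the intended reasoning is precisely your observation that $TA = |A|^{-1}A$ has spectrum $\{-1,1\}$ and hence minimal polynomial of degree two.
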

%\begin{proof}
%Minimization property (\ref{eqn:rkrylov}) at a step $i$ 
%of a preconditioned minimal residual method can be equivalently written as
%\begin{equation}\label{eqn:mr_poly}
%\|r^{(i)}\|_T = \min_{p \in \mathcal{P}_i, \ p(0)=1} \|p(AT) r^{(0)}\|_T,
%\end{equation}
%where $\mathcal{P}_i$ is the set of all polynomials of degree at most $i$.
%Then, according to decomposition~(\ref{eqn:absval}), the choice $T =  \left|A\right|^{-1}$ 
%results in the matrix $AT = \mbox{sign}(A)$ with only two distinct eigenvalues: 
%$-1$ and $1$. Hence, the minimal polynomial of $AT$ is of the second degree. Thus, by~(\ref{eqn:mr_poly}), 
%$\|r^{(i)}\|_T = 0$ for at most $i = 2$. 
%\qquad\end{proof}

Theorem~\ref{thm:opt_prec} implies that $T  = \left|A\right|^{-1}$
is an \textit{ideal SPD preconditioner}. 
%%
%A preconditioner $T$ can be considered ideal if it delivers the 
%preconditioned matrix $TA$ with the corresponding minimal polynomial of the
%least possible degree; e.g., see related discussions in~\cite{Gill.Murray.Ponceleon.Saunders:92, Murphy.Golub.Wathen:99}. 
%%
%The degree of the minimal polynomial 
%gives the number of iterations typically
%required by a Krylov subspace method to guarantee convergence.  
%
%
%Since the symmetric matrix $A$ has both positive and negative eigenvalues, 
%so does $TA$, provided that $T$ is SPD.
%%
%This implies that
%the degree of the minimal polynomial of $TA$ is at least $2$.
%% 
%Therefore, an ideal SPD preconditioner, in general, 
%ensures convergence to the  
%solution of $Ax = b$ in two steps. 
%
%One-step convergence can occur for special choices of the initial guess.     
%We also 
Note that
the theorem 
%Theorem~\ref{thm:opt_prec} 
holds not only for the preconditioned minimal residual 
method~(\ref{eqn:xkrylov})--(\ref{eqn:rkrylov}), but for all methods where convergence
is determined by the degree of the minimal polynomial of $TA$. % of the preconditioned matrix. 

In practical situations, 
the computation of an \textit{ideal} SPD preconditioner $T = \left| A \right|^{-1}$ is prohibitively costly.
However, we show that it is possible to construct inexpensive SPD preconditioners that resemble 
$\left| A \right|^{-1}$ and can
significantly accelerate the convergence of an iterative method.
%We refer to such a preconditioning strategy as the \textit{absolute value preconditioning}
%and define \textit{absolute value preconditioners} as follows.

\begin{definition}\label{def:avp}
We call 
an SPD preconditioner $T$ for a symmetric indefinite linear system $Ax = b$
an \emph{AV preconditioner} if it satisfies
\begin{equation}\label{eqn:avp_spectral}
\delta_0 (v, T^{-1} v) \leq ( v, \left| A \right| v) \leq \delta_1 (v, T^{-1} v), \ \forall v % \in \mathbb{R}^n
\end{equation} 
with constants $\delta_1 \geq \delta_0 > 0$, such that the ratio $\delta_1 / \delta_0 \geq 1$ is reasonably small. 
%
%If the linear system represents a hierarchy of mesh problems,
%then 
%%%%
%we call $T$ \emph{optimal} if the ratio is
%%%%
%%%the ratio must be 
%independent of the problem size $n$, i.e.,
%matrices $\left| A \right|$ and $T^{-1}$ are spectrally equivalent~\cite{Dyakonov:96}. 
\end{definition}

Let us remark that Definition~\ref{def:avp} of the AV preconditioner
is informal because no
precise assumption is made of
how small the ratio $\delta_1 / \delta_0$ should be.
%
%However, qualitively, i
It is clear from~(\ref{eqn:avp_spectral}) that
$\delta_1 / \delta_0$ measures how well the preconditioner $T$ approximates
$\left| A \right|^{-1}$, up to a positive scaling.
If $A$ represents a hierarchy of mesh problems then it is desirable that 
$\delta_1 / \delta_0$ is independent of the problem size. 
%, represents a hierarchy of mesh problems, and $\delta_1 / \delta_2$
%
%
In this case, if $A$ is SPD, 
%represents a hierarchy of mesh problems, and $\delta_1 / \delta_2$
%is independent of the mesh size,
%then $\left| A \right| = A$, 
%and, for mesh problems, Definition~\ref{def:avp} 
Definition~\ref{def:avp} of the AV preconditioner
%%
%of the \emph{optimal} AV preconditioner 
%%
%is consistent with the well known concept of  
is consistent with the well known concept of  
spectrally equivalent preconditioning for SPD
systems; see~\cite{Dyakonov:96}.

The following theorem provides bounds for eigenvalues of the preconditioned matrix
$TA$ in terms of the spectrum of $T \left| A \right| $. 
We note that $T$ and $A$, and thus $TA$ and $T \left| A \right| $,  do not 
in general commute. Therefore, our spectral analysis cannot be based on a traditional 
matrix analysis tool, a basis of eigenvectors. 

\begin{theorem}\label{thm:avp_spect_bounds}
Given a nonsingular symmetric indefinite $A \in \mathbb{R}^{n \times n}$ and
an SPD $T \in \mathbb{R}^{n \times n}$, let 
%\begin{equation}\label{eqn:mu}
$
\mu_1 \leq \mu_2 \leq \ldots \leq \mu_n
$
%\end{equation}
be the eigenvalues of $T \left| A \right|$. 
Then eigenvalues 
%\begin{equation}\label{eqn:lambda}
$
\lambda_1 \leq \ldots \leq \lambda_p < 0 < \lambda_{p+1} \leq \ldots \leq \lambda_n
$
%\end{equation} 
of $TA$ are located in intervals 
\begin{equation}\label{eqn:spectr_bounds}
\begin{array}{cccccl}
 - \mu_{n-j+1}  & \leq & \lambda_j  & \leq & - \mu_{p-j+1},& \ j = 1,\ldots,p; \\
 \mu_{j-p}      & \leq & \lambda_j  & \leq & \mu_j,          &\ j = p+1, \ldots, n.
\end{array}
\end{equation}
\end{theorem}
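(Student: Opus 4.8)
The plan is to run the argument through the Courant--Fischer min--max principle, applied not directly to $TA$ and $T\left|A\right|$ (which need not be symmetric) but to the equivalent generalized eigenvalue problems for the pencils $(A,T^{-1})$ and $(\left|A\right|,T^{-1})$: indeed $TAv=\lambda v$ iff $Av=\lambda T^{-1}v$, and $T\left|A\right|v=\mu v$ iff $\left|A\right|v=\mu T^{-1}v$. Conjugating by $T^{1/2}$ shows $TA$ is similar to the symmetric $T^{1/2}AT^{1/2}$ and $T\left|A\right|$ to the SPD $T^{1/2}\left|A\right|T^{1/2}$; hence all $\lambda_j$ are real, all $\mu_j$ positive, and, since $T^{1/2}AT^{1/2}$ is congruent to $A$, Sylvester's law of inertia gives that $TA$ has exactly $p$ negative and $n-p$ positive eigenvalues, where $p$ is the number of negative eigenvalues of $A$, so that the ordering assumed in the statement is justified. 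For $x\neq 0$ set $\rho_A(x)=(x,Ax)/(x,T^{-1}x)$ and $\rho_{|A|}(x)=(x,\left|A\right|x)/(x,T^{-1}x)$; then for every $j$ one has the two min--max representations $\lambda_j=\min_{\dim S=j}\max_{0\neq x\in S}\rho_A(x)=\max_{\dim S=n-j+1}\min_{0\neq x\in S}\rho_A(x)$, and the analogous ones for $\mu_j$ with $\rho_{|A|}$ in place of $\rho_A$.

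Two of the four required inequalities are easy. From $A=V\Lambda V^*$ and $\left|A\right|=V|\Lambda|V^*$ we get the pointwise bounds $-(x,\left|A\right|x)\le (x,Ax)\le (x,\left|A\right|x)$, i.e. $-\rho_{|A|}\le\rho_A\le\rho_{|A|}$ everywhere. Substituting $\rho_A\le\rho_{|A|}$ into the first representation of $\lambda_j$ yields $\lambda_j\le\mu_j$ for all $j$, and substituting $-\rho_{|A|}\le\rho_A$ into the second representation yields $\lambda_j\ge-\mu_{n-j+1}$ for all $j$. These already cover the lower bound in the first line of~(\ref{eqn:spectr_bounds}) (for $j=1,\dots,p$) and the upper bound in the second line (for $j=p+1,\dots,n$).

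The remaining bounds $\lambda_j\le-\mu_{p-j+1}$ ($j\le p$) and $\lambda_j\ge\mu_{j-p}$ ($j>p$) require the indefiniteness of $A$, and this is where I expect the real work to be, precisely because $T$ and $A$ need not commute, so the two eigenproblems cannot be diagonalized together and one must reason with subspaces only. Let $R_-$ and $R_+$ denote the $A$-invariant subspaces spanned by the eigenvectors of $A$ belonging to its negative and to its positive eigenvalues, with $\dim R_-=p$ and $\dim R_+=n-p$; on $R_-$ one has $\rho_A=-\rho_{|A|}$ and on $R_+$ one has $\rho_A=\rho_{|A|}$. For $j\le p$, restrict the outer minimum in $\lambda_j=\min_{\dim S=j}\max_{x\in S}\rho_A(x)$ to subspaces $S\subseteq R_-$ (legitimate since $j\le\dim R_-$), on which $\max_{x\in S}\rho_A(x)=-\min_{x\in S}\rho_{|A|}(x)$; then pick a subspace $U$ of dimension $n-p+j$ attaining $\mu_{p-j+1}=\max_{\dim U=n-p+j}\min_{x\in U}\rho_{|A|}(x)$, observe $\dim(U\cap R_-)\ge\dim U+\dim R_--n=j$, so $U\cap R_-$ contains a $j$-dimensional $S\subseteq R_-$ with $\min_{x\in S}\rho_{|A|}\ge\min_{x\in U}\rho_{|A|}=\mu_{p-j+1}$; combining the two observations gives $\lambda_j\le-\mu_{p-j+1}$. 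The bound for $j>p$ is the mirror image: restrict the outer maximum in $\lambda_j=\max_{\dim S=n-j+1}\min_{x\in S}\rho_A(x)$ to $S\subseteq R_+$ (legitimate since $n-j+1\le n-p$), use $\rho_A=\rho_{|A|}$ there, and intersect a subspace of dimension $n-j+p+1$ attaining $\mu_{j-p}$ with $R_+$ to extract the needed $(n-j+1)$-dimensional trial subspace. The four inequalities together are exactly~(\ref{eqn:spectr_bounds}).
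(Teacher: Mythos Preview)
Your proof is correct, and for the two ``easy'' inequalities ($\lambda_j\le\mu_j$ and $\lambda_j\ge-\mu_{n-j+1}$) it coincides with the paper's argument. For the remaining two inequalities, however, you take a genuinely different route. The paper does not use the eigenspace decomposition $R_-\oplus R_+$ of $A$ at all; instead it passes to the \emph{inverse} pencils $(A^{-1},T)$ and $(|A|^{-1},T)$, notes that $|(v,A^{-1}v)|\le (v,|A|^{-1}v)$ just as $|(v,Av)|\le (v,|A|v)$, reapplies the very same Courant--Fischer argument to get $-\xi_{n-j+1}\le\zeta_j\le\xi_j$ for the reciprocal eigenvalues $\xi_j=1/\mu_{n-j+1}$ and $\zeta_j$, and then translates these back. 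So the paper's proof is a symmetric ``run the easy argument twice, once for $A$ and once for $A^{-1}$,'' whereas yours stays with the original pencils but exploits the structure of $A$ more directly via a subspace-intersection/dimension-counting argument. Your approach has the advantage of never forming or invoking $A^{-1}$ or $|A|^{-1}$; the paper's has the advantage of needing no dimension bookkeeping beyond the single pointwise inequality $-|A|\le A\le|A|$ (and its inverse analogue). Both are clean and short.
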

\begin{proof}
We start by observing that the absolute value of the Rayleigh
quotient of the generalized eigenvalue problem $A v = \lambda \left| A \right| v$ is
bounded by $1$, i.e.,
\begin{equation}\label{eqn:f_of_val}
\left|( v, A v)\right| \leq (v,\left| A \right| v), \ \forall v \in \mathbb{R}^n. 
\end{equation}

Now, we recall that the spectra of matrices $T \left| A \right|$ and $T A$ are given by
the generalized eigenvalue problems $\left| A \right| v = \mu T^{-1} v$ and
$A v = \lambda T^{-1} v$, respectively, and introduce the corresponding Rayleigh quotients
\begin{equation}\label{eqn:rq}
\psi(v) \equiv \frac{(v,\left|A\right| v)}{(v, T^{-1} v)}, \ \phi(v) \equiv \frac{(v,A v)}{(v, T^{-1} v)}, \ v \in \mathbb{R}^n.
\end{equation}

Let us fix any index $j \in \left\{ 1, 2, \ldots, n \right\}$, and denote by $S$ 
an arbitrary subspace of $\mathbb{R}^n$ such that $\mbox{dim}(S) = j$. 
Since inequality~(\ref{eqn:f_of_val}) also holds on $S$, using~(\ref{eqn:rq}) we write  
\begin{equation}\label{eqn:rq_rel}
-\psi(v) \leq \phi(v) \leq \psi(v), \ v \in S.
\end{equation}
Moreover, taking the maxima in vectors $v \in S$, and after that the minima in 
subspaces $S \in S^j =  \left\{ S \subseteq \mathbb{R}^n : \mbox{dim}(S) = j \right\}$,
%all $j$-dimensional subspaces
%$S \subseteq \mathbb{R}^n$, 
of all parts of~(\ref{eqn:rq_rel}) preserves the inequalities, so
\begin{equation}\label{eqn:maxmin_rq_rel}
\min_{S \in S^j} \max_{v \in S} (-\psi(v)) \leq 
\min_{S \in S^j} \max_{v \in S} \phi(v) \leq 
\min_{S \in S^j} \max_{v \in S} \psi(v).
\end{equation}
%where $S^j = \left\{ S \subseteq \mathbb{R}^n : \mbox{dim}(S) = j \right\}$.
By the Courant-Fischer theorem (see, e.g.,~\cite{Horn.Johnson:90, Parlett:98}) 
for the Rayleigh quotients $\pm \psi(v)$ and $\phi(v)$ defined in~(\ref{eqn:rq}),
we conclude from~(\ref{eqn:maxmin_rq_rel}) that
%\begin{equation}\label{eqn:cf1}
\[
-\mu_{n-j+1} \leq \lambda_j \leq \mu_j. 
\]
%\end{equation}
Recalling that $j$ has been arbitrarily chosen, we obtain the following bounds on the eigenvalues
of $TA$:
\begin{equation}\label{eqn:spectr_bounds1}
\begin{array}{cccccl}
 - \mu_{n-j+1}  & \leq & \lambda_j  & < & 0,& \ j = 1,\ldots,p; \\
         0      & < & \lambda_j  & \leq & \mu_j,          &\ j = p+1, \ldots, n.
\end{array}
\end{equation}

%\min_{S \subseteq \mathbb{R}^n, \mbox{{\small dim}}(S) = j}

Next, in order to derive nontrivial upper and lower bounds for the $p$ negative and $n-p$
positive eigenvalues $\lambda_j$ in~(\ref{eqn:spectr_bounds1}), we use the fact that
eigenvalues $\xi_j$ and $\zeta_j$ of the generalized eigenvalue problems $\left| A \right|^{-1}v = \xi T v$ and 
$A^{-1}v = \zeta T v$ are the reciprocals of the eigenvalues of the problems $\left| A \right| v = \mu T^{-1} v$
and $A v = \lambda T^{-1} v$, respectively, i.e., 
\begin{equation}\label{eqn:xi}
0 < \xi_1 = \frac{1}{\mu_n} \leq \xi_2 = \frac{1}{\mu_{n-1}} \leq \ldots \leq \xi_n = \frac{1}{\mu_1}, 
\end{equation}
and
\begin{equation}\label{eqn:zeta}
\zeta_1 = \frac{1}{\lambda_p} \leq \ldots \leq  \zeta_p = \frac{1}{\lambda_1} < 0 < \zeta_{p+1} = \frac{1}{\lambda_n} \leq \ldots \leq \zeta_n = \frac{1}{\lambda_{p+1}}. 
\end{equation}

Similar to~(\ref{eqn:f_of_val}),
%\begin{equation}\label{eqn:f_of_val_inv}
\[
\left|( v, A^{-1} v)\right| \leq (v,\left| A \right|^{-1} v), \ \forall v \in \mathbb{R}^n.
\]
%\end{equation} 
Thus, we can use the same arguments as those following~(\ref{eqn:f_of_val})
to show that relations~(\ref{eqn:rq_rel}) and~(\ref{eqn:maxmin_rq_rel}),
with a fixed $j \in \left\{ 1, 2, \ldots, n \right\}$, also hold for
\begin{equation}\label{eqn:rq_inv}
\psi(v) \equiv \frac{(v,\left|A\right|^{-1} v)}{(v, T v)}, \ \phi(v) \equiv \frac{(v,A^{-1} v)}{(v, T v)}, \ v \in \mathbb{R}^n,
\end{equation}
where $\psi(v)$ and $\phi(v)$ are now the Rayleigh quotients of
the generalized eigenvalue problems $\left| A \right|^{-1}v = \xi T v$ and  $ A^{-1}v = \zeta T v$, respectively.
The Courant-Fischer theorem  
for $\pm \psi(v)$ and $\phi(v)$ in~(\ref{eqn:rq_inv})
allows us to conclude from~(\ref{eqn:maxmin_rq_rel}) that
\[
-\xi_{n-j+1} \leq \zeta_j \leq \xi_j. 
\]
Given the arbitrary choice of $j$ in the above inequality, 
by (\ref{eqn:xi})--(\ref{eqn:zeta})
we get the following bounds on the eigenvalues
of $TA$:
\begin{equation}\label{eqn:spectr_bounds2}
\begin{array}{cccccl}
 - 1/\mu_{p-j+1}  & \leq & 1/\lambda_j  & < & 0,& \ j = 1,\ldots,p; \\
         0      & < &  1/\lambda_j  & \leq & 1/\mu_{j-p},          &\ j = p+1, \ldots, n.
\end{array}
\end{equation}
Combining~(\ref{eqn:spectr_bounds1}) and~(\ref{eqn:spectr_bounds2}), we obtain~(\ref{eqn:spectr_bounds}).
%This completes the proof.  
\qquad\end{proof}

Theorem~\ref{thm:avp_spect_bounds} suggests two useful implications
%,which are 
given by the corresponding corollaries below.
In particular, the following result 
describes $\Lambda(TA)$, i.e., the spectrum of the preconditioned matrix $TA$, in terms of 
$\delta_0$ and $\delta_1$ in~(\ref{eqn:avp_spectral}). 

\begin{corollary}\label{cor:avp_spect} 
Given a nonsingular symmetric indefinite $A \in \mathbb{R}^{n \times n}$,
an SPD $T \in \mathbb{R}^{n \times n}$, and constants $\delta_1 \geq \delta_0 > 0$
satisfying~(\ref{eqn:avp_spectral}), we have 
%all the eigenvalues of $TA$ are located in the union of two intervals 
\begin{equation}\label{eqn:cond_prec_matr}
\Lambda(TA) \subset \left[ - \delta_1, -\delta_0 \right] \bigcup \left[ \delta_0, \delta_1 \right],
\end{equation}
where $\Lambda(TA)$ is the spectrum of $TA$.
\end{corollary}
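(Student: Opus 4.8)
The plan is to obtain the statement as an immediate consequence of Theorem~\ref{thm:avp_spect_bounds}, after translating the spectral equivalence~(\ref{eqn:avp_spectral}) into two-sided bounds on the eigenvalues $\mu_1 \leq \cdots \leq \mu_n$ of $T\left|A\right|$. First I would recall, as in the proof of Theorem~\ref{thm:avp_spect_bounds}, that the $\mu_i$ are the values of the generalized Rayleigh quotient $\psi(v) = (v,\left|A\right| v)/(v,T^{-1}v)$ of the problem $\left|A\right| v = \mu T^{-1} v$, so that by the Courant--Fischer characterization every $\mu_i$ satisfies $\min_{v \neq 0}\psi(v) \leq \mu_i \leq \max_{v \neq 0}\psi(v)$. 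Since $T$ is SPD, so is $T^{-1}$, and $(v,T^{-1}v) > 0$ for $v \neq 0$; dividing~(\ref{eqn:avp_spectral}) by this quantity gives $\delta_0 \leq \psi(v) \leq \delta_1$ for all $v \neq 0$. Hence $\delta_0 \leq \mu_1$ and $\mu_n \leq \delta_1$, and therefore $\mu_i \in [\delta_0,\delta_1]$ for every $i$.

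Next I would substitute these bounds into~(\ref{eqn:spectr_bounds}). For a negative eigenvalue $\lambda_j$ of $TA$ (with $1 \leq j \leq p$), Theorem~\ref{thm:avp_spect_bounds} gives $-\mu_{n-j+1} \leq \lambda_j \leq -\mu_{p-j+1}$; combining with $\mu_{n-j+1} \leq \delta_1$ and $\delta_0 \leq \mu_{p-j+1}$ yields $-\delta_1 \leq \lambda_j \leq -\delta_0$. For a positive eigenvalue $\lambda_j$ (with $p+1 \leq j \leq n$), the theorem gives $\mu_{j-p} \leq \lambda_j \leq \mu_j$, and combining with $\delta_0 \leq \mu_{j-p}$ and $\mu_j \leq \delta_1$ yields $\delta_0 \leq \lambda_j \leq \delta_1$. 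Taking the union over all $j$ produces
\[
\Lambda(TA) \subset [-\delta_1,-\delta_0] \cup [\delta_0,\delta_1],
\]
which is~(\ref{eqn:cond_prec_matr}).

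I do not expect any genuine obstacle: the corollary is essentially a restatement of Theorem~\ref{thm:avp_spect_bounds} once~(\ref{eqn:avp_spectral}) is recognized as the assertion $\Lambda(T\left|A\right|) \subset [\delta_0,\delta_1]$. The only point meriting a word of care is that the precise reindexing of the $\mu$'s appearing in~(\ref{eqn:spectr_bounds}) (the indices $n-j+1$, $p-j+1$, $j-p$, $j$) is immaterial for this weaker conclusion, because all the $\mu_i$ already lie in the single interval $[\delta_0,\delta_1]$. An alternative, self-contained derivation avoids invoking Theorem~\ref{thm:avp_spect_bounds} and instead repeats its argument: inequality~(\ref{eqn:f_of_val}) together with~(\ref{eqn:avp_spectral}) gives $|\phi(v)| \leq \psi(v) \leq \delta_1$ for the quotients in~(\ref{eqn:rq}), while the reciprocal quotients in~(\ref{eqn:rq_inv}), bounded via~(\ref{eqn:xi}), supply the lower bound $|\lambda_j| \geq \delta_0$; this again places $\Lambda(TA)$ in the two stated intervals.
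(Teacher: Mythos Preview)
Your proposal is correct and follows essentially the same route as the paper: translate~(\ref{eqn:avp_spectral}) into $\mu_i \in [\delta_0,\delta_1]$ for all $i$, then read off~(\ref{eqn:cond_prec_matr}) from the bounds~(\ref{eqn:spectr_bounds}) of Theorem~\ref{thm:avp_spect_bounds}. The paper's proof is even terser, simply citing~(\ref{eqn:avp_spectral}) and~(\ref{eqn:spectr_bounds}) at the extreme indices $j=1,p,p+1,n$, but the content is identical.
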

\begin{proof}
Follows directly from~(\ref{eqn:avp_spectral}) and~(\ref{eqn:spectr_bounds}) with $j = 1,p,p+1,n$.
\qquad\end{proof}

The next corollary
shows that
the presence of
reasonably populated clusters of eigenvalues in the spectrum of $T \left| A \right|$
guarantees the occurrence of corresponding 
clusters in the spectrum of the preconditioned matrix $TA$. 

\begin{corollary}\label{cor:avp_cluster} 
Given a nonsingular symmetric indefinite $A \in \mathbb{R}^{n \times n}$ and
an SPD $T \in \mathbb{R}^{n \times n}$,  
let $\mu_l \leq \mu_{l+1} \leq \ldots \leq \mu_{l+k-1}$
be a sequence of $k$ eigenvalues of $T\left|A\right|$, 
where $1 \leq l < l+k-1 \leq n$ and $\tau = \left| \mu_l - \mu_{l+k-1} \right|$.
Then, if $k \geq p+2$, the $k - p$ positive eigenvalues
$\lambda_{l+p} \leq \lambda_{l+p+1} \leq \ldots \leq \lambda_{l+k-1}$ of $TA$
are such that $\left| \lambda_{l+p} - \lambda_{l+k-1}\right| \leq \tau$.
Also, if $k \geq (n-p)+2$, the $k - (n-p)$ negative eigenvalues 
$\lambda_{n-k-l+2} \leq \ldots \leq \lambda_{p-l} \leq \lambda_{p-l+1}$
of $TA$ are such that $\left| \lambda_{n-k-l+2} - \lambda_{p-l+1} \right| \leq \tau$.
\end{corollary}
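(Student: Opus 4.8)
The plan is to derive Corollary~\ref{cor:avp_cluster} as a direct consequence of the two-sided bounds~(\ref{eqn:spectr_bounds}) in Theorem~\ref{thm:avp_spect_bounds}, applied selectively to the indices that bracket the claimed clusters. First I would treat the positive part. The positive eigenvalues of $TA$ satisfy $\mu_{j-p} \leq \lambda_j \leq \mu_j$ for $j = p+1,\ldots,n$. Given the block $\mu_l \leq \ldots \leq \mu_{l+k-1}$ with $k \geq p+2$, the relevant positive eigenvalues are those $\lambda_j$ whose index-shifted bounds fall inside $[\mu_l,\mu_{l+k-1}]$: taking $j = l+p$ gives the lower endpoint $\mu_{(l+p)-p} = \mu_l \leq \lambda_{l+p}$, and taking $j = l+k-1$ gives the upper endpoint $\lambda_{l+k-1} \leq \mu_{l+k-1}$. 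Hence $\lambda_{l+p}$ and $\lambda_{l+k-1}$ both lie in $[\mu_l, \mu_{l+k-1}]$, and more generally so do all $\lambda_j$ with $l+p \leq j \leq l+k-1$ by monotonicity of the $\mu$'s; that is exactly $k-p$ eigenvalues, and their spread is at most $\tau = |\mu_l - \mu_{l+k-1}|$.

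Next I would handle the negative part symmetrically, using the other line of~(\ref{eqn:spectr_bounds}), namely $-\mu_{n-j+1} \leq \lambda_j \leq -\mu_{p-j+1}$ for $j = 1,\ldots,p$. Here the role of ``position in the $\mu$-spectrum'' is played by $n-j+1$ on the lower side and $p-j+1$ on the upper side. I want to choose the indices $j$ so that $n-j+1$ and $p-j+1$ land in the window $\{l,\ldots,l+k-1\}$. Setting $n - j + 1 = l + k - 1$ gives $j = n - k - l + 2$ and then $\lambda_{n-k-l+2} \geq -\mu_{l+k-1}$; setting $p - j + 1 = l$ gives $j = p - l + 1$ and then $\lambda_{p-l+1} \leq -\mu_l$. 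So $\lambda_{n-k-l+2}$ and $\lambda_{p-l+1}$ — and all negative $\lambda_j$ between them — lie in $[-\mu_{l+k-1}, -\mu_l]$, an interval of length $\tau$. The condition $k \geq (n-p)+2$ is precisely what guarantees that $n-k-l+2 \geq 1$ and that this range of indices is nonempty and consists of negative eigenvalues, i.e.\ that $n-k-l+2 \leq p-l+1$, equivalently $k \geq n-p+1$; the $+2$ rather than $+1$ ensures the cluster has at least two members, matching the ``$k-(n-p)$'' count.

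The only real care needed is bookkeeping: verifying that the index ranges stay within $[1,n]$, that the selected $\lambda_j$ are genuinely on the correct side of zero (which follows from $j \leq p$ versus $j \geq p+1$ in~(\ref{eqn:spectr_bounds})), and that the counts $k-p$ and $k-(n-p)$ come out right. I would write this out as: fix the block, invoke~(\ref{eqn:spectr_bounds}) at the four boundary indices $j = l+p$, $j = l+k-1$ (positive case) and $j = n-k-l+2$, $j = p-l+1$ (negative case), note that the intermediate eigenvalues are sandwiched by monotonicity, and read off the two inequalities. I expect no substantive obstacle — the corollary is essentially a relabelling of Theorem~\ref{thm:avp_spect_bounds} — so the ``hard part'' is merely stating the index arithmetic cleanly enough that the reader can check the endpoint assignments without redoing them.
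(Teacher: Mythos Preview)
Your proposal is correct and is exactly the approach the paper takes: its proof reads in full ``Follows directly from bounds~(\ref{eqn:spectr_bounds}),'' and you have simply spelled out the index arithmetic that this entails. One small slip in your bookkeeping paragraph: the inequality $n-k-l+2 \geq 1$ follows from the standing hypothesis $l+k-1 \leq n$, not from $k \geq (n-p)+2$; the latter is what ensures $n-k-l+2 \leq p-l+1$, so that the index range is nonempty and stays among the negative eigenvalues.
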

\begin{proof}
Follows directly from bounds~(\ref{eqn:spectr_bounds}).
\qquad\end{proof}

Corollary~\ref{cor:avp_spect} implies that the ratio $\delta_1 / \delta_0 \geq 1$ of the constants 
from~(\ref{eqn:avp_spectral}) measures the quality of the AV
preconditioner $T$. Indeed, the convergence speed of the
preconditioned minimal residual method is determined by the spectrum of $TA$,
primarily by the intervals of the right-hand side of inclusion \eqref{eqn:cond_prec_matr}.
Additionally, Corollary~\ref{cor:avp_cluster} prompts that a ``good'' AV 
preconditioner should ensure clusters of eigenvalues in the spectrum of $T\left|A\right|$.
This implies the clustering of eigenvalues of the preconditioned matrix $TA$, which has a favorable
effect on the convergence behavior of a polynomial iterative method, such as PMINRES.
%the preconditioned minimal
%residual method. %, for linear system~(\ref{eqn:sys}).     
%

%The above discussion concerns the construction of absolute value preconditioners for a
%general symmetric indefinite linear system~(\ref{eqn:sys}).
In the next section, we construct an example of the AV 
preconditioner for a particular model problem. We apply the MG techniques.  
%The preconditioner is based 
%on the MG
%special case where $A$
%has a (block) diagonal approximation $C$ such that $\left| C \right|^{-1}$
%is available at a small computational cost. 
%A reasonable absolute value preconditioner can then be expected to be given by $T = \left| C \right|^{-1}$.

% \section{MG absolute value preconditioning for a model problem}\label{subsec:prec_constr}
\section{MG AV preconditioning for a model problem}\label{subsec:prec_constr}
Let us consider the following real boundary value problem, 
%\begin{equation}\label{eqn:helmholtz_bvp}
%\begin{array}{ll}
%-\Delta u (\mathrm x, \mathrm y) - c^2 u(\mathrm x,\mathrm y)  =  f(\mathrm x,\mathrm y), & (\mathrm x,\mathrm y) \in \Omega  = (0,1)\times(0,1),\\
%\\
%u|_\Gamma = 0, &   
%\end{array}
%\end{equation} 
\begin{equation}\label{eqn:helmholtz_bvp}
-\Delta u (\mathrm x, \mathrm y) - c^2 u(\mathrm x,\mathrm y)  =  f(\mathrm x,\mathrm y), \  (\mathrm x,\mathrm y) \in \Omega  = (0,1)\times(0,1), \ u|_\Gamma = 0,    
\end{equation} 
where $\displaystyle \Delta = \partial^2/\partial \mathrm x ^2 + \partial^2 /\partial \mathrm y^2$ is the Laplace operator 
%(Laplacian)
%$c^2 \geq 0$, $f(\mathrm x, \mathrm y) \in \mathcal{C}(\Omega)$, 
and $\Gamma$ denotes the boundary of $\Omega$.
Problem (\ref{eqn:helmholtz_bvp}) is a particular instance of
the Helmholtz equation with Dirichlet boundary conditions, where $c > 0$ is a wave number.  
%see, e.g.,~\cite{Tikhonov.Samarskii:77}.

After introducing a uniform grid of size $h$ in both directions and
using the standard $5$-point finite-difference stencil to discretize continuous 
problem (\ref{eqn:helmholtz_bvp}), 
%see, e.g.,~\cite{Godunov.Ryabenkii:87}, 
one obtains the corresponding discrete problem % $Ax = b$ of the form 
\begin{equation}\label{eqn:helmholtz_fd}
 (L - c^2 I) x = b,
\end{equation} 
where $A \equiv L - c^2 I$ represents a discrete negative Laplacian $L$ (later called ``Laplacian''),
satisfying the Dirichlet boundary condition 
%at the grid points on the boundary 
shifted by a scalar $c^2$.  
%times the identity matrix $I$.
%$n = t^2$ .

The common rule of thumb, see, e.g.,\ \cite{Elman.Ernst.OLeary:01, Harari.Hughes:91}, for discretizing~(\ref{eqn:helmholtz_bvp}) is
\begin{equation}\label{eqn:thumb}
ch \leq \pi/5.
\end{equation}
%
%The right-hand side $b$ in (\ref{eqn:helmholtz_fd}) is the vector of
%function values $f(\mathrm x,\mathrm y)$ calculated at the grid points. 
%%In our numerical tests, $b$ is generated randomly. 
%The solution 
%of system~(\ref{eqn:helmholtz_fd}) provides an approximation
%to the solution of the boundary value problem~(\ref{eqn:helmholtz_bvp}). %, evaluated at the grid points.
Below, we call~(\ref{eqn:helmholtz_fd}) the \textit{model problem}.
%
%Let $\mu_{jk} = \frac{4}{h^2} \left( \sin^2 \frac{j\pi}{2 (t+1)} +  \sin^2 \frac{k\pi}{2 (t+1)} \right)$
%be the eigenvalues of the Laplacian $L$ associated with the eigenvectors
%$v_{jk} = \sin (j\pi x_r) \sin (k \pi y_s)$, where $(x_r, y_s)$ are the grid points
%and $j,k = 1,\ldots,t$. 
%%Let $(\mu_{jk}, v_{jk})$ denote the eigenpairs of the Laplacian $L$, where
%%\begin{equation}\label{eqn:laplace_eval}
%%\mu_{jk} = \frac{4}{h^2} \left( \sin^2 \frac{j\pi}{2 (N+1)} +  \sin^2 \frac{k\pi}{2 (N+1)} \right), \ j,k = 1,\ldots,N,
%%\end{equation}
%%are the eigenvalues and 
%%\begin{equation}\label{eqn:laplace_evec}
%%v_{jk} = \sin (j\pi x_r) \sin (k \pi y_s), \ j,k,r,s = 1,\ldots,N, 
%%\end{equation}
%%are the corresponding eigenvectors; $(x_r, y_s)$ are the grid points. 
%It is clear that $v_{jk}$ are also the eigenvectors of 
%$L - c^2 I$ corresponding to the eigenvalues $\lambda_{jk} = \mu_{jk} - c^2$, i.e., 
%$(\lambda_{jk},v_{jk})$ are the eigenpairs of the shifted Laplacian.
%% 
%%Assuming 
We assume that the shift $c^2$ is different from any eigenvalue of the Laplacian 
and is greater than the smallest but less than the largest eigenvalue.
%, i.e., 
%$\lambda_{\min}(L) <c^2 < \lambda_{\max}(L)$, where $\lambda_{\min}(L) = 2\pi^2 + \mathcal{O}(h^2)$
%and $\lambda_{\max}(L) = 8h^{-2} + \mathcal{O}(1)$. 
%
Thus, the matrix $L - c^2 I$ is nonsingular symmetric indefinite.
%, which allows applying the idea of the AV preconditioning.  
In the following subsection, we apply the idea of the AV preconditioning 
to construct an MG AV preconditioner for system (\ref{eqn:helmholtz_fd}). 
%based on the idea of the absolute value 
%preconditioning introduced in the previous section.
%
%one can choose an absolute value preconditioner; 
%see Definition~\ref{def:avp} with $A = L - c^2 I$.
%i.e.,$T \approx \left|A\right|^{-1} = \left|L - c^2 I\right|^{-1}$.

While our main focus throughout the paper is on the 2D problem~(\ref{eqn:helmholtz_bvp}),
in order to simplify presentation of theoretical analysis, we also refer to the 1D
analogue 
\begin{equation}\label{eqn:helmholtz_bvp1D}
%\begin{array}{ll}
-\displaystyle  u'' (\mathrm x) - c^2 u(\mathrm x)  =  f(\mathrm x), \  u(0) = u(1) = 0.   
%\end{array}
\end{equation}
The conclusions drawn from~(\ref{eqn:helmholtz_bvp1D}), however, 
remain qualitatively the same for the 2D problem of interest, which 
we test numerically.

\subsection{Two-grid AV preconditioner}\label{subsec:2grid}

Along with the fine grid of mesh size $h$ underlying problem~(\ref{eqn:helmholtz_fd}),
let us consider a coarse grid of mesh size $H > h$. We denote the discretization of the
Laplacian on this grid by $L_H$, 
% (further called ``the coarse-level Laplacian''), 
and $I_H$ represents the identity operator of the corresponding dimension.   
We assume that the exact fine-level absolute value $\left|L - c^2 I\right|$ and its inverse  
are not computable, whereas the inverse of the coarse-level operator $\left|L_H - c^2 I_H\right|$ can be efficiently constructed. 
In the two-grid framework, we use the subscript 
$H$ to refer to the quantities defined on the coarse grid. No subscript is used for denoting the fine grid quantities.    

While $\left|L - c^2 I\right|$ is not available, let us assume 
that we have its SPD approximation $B$, i.e., $B \approx |L - c^2 I|$ and $B = B^* > 0$.
The operator $B$ can be given in the explicit matrix form or through the action 
on a vector.
We suggest the following general scheme 
as a two-grid AV preconditioner for model problem~(\ref{eqn:helmholtz_fd}).

\vspace{.1in}
\begin{algorithm}[The two-grid AV preconditioner]\label{alg:g2g}
Input: $r$, $B \approx |L - c^2 I|$. Output: $w$.
%
%Output: w
\begin{enumerate}
	\item \emph{Presmoothing}. Apply $\nu$ smoothing steps, $\nu \geq 1$: % with the zero initial guess ($w^{(0)} = 0$):
\begin{equation}\label{eqn:pre}
w^{(i+1)} = w^{(i)} + M^{-1} (r - B w^{(i)}),  \ i = 0,\ldots,\nu - 1, \ w^{(0)} = 0,
\end{equation}
where $M$ defines a smoother.  
%This results in the vector 
Set $w^{pre} = w^{(\nu)}$. %, $\nu \geq 1$. 
	\item \emph{Coarse grid correction}. Restrict ($R$) $r - B w^{pre}$ to the coarse grid, 
	apply $\left|L_H - c^2 I_H\right|^{-1}$, 
	and prolongate ($P$) to the fine grid.
	This delivers the coarse grid correction, which is added to $w^{pre}$: % to obtain the corrected vector $w^{cgc}$:
%	\begin{equation}\label{eqn:cgc}
\begin{eqnarray}
\label{eqn:cgc-1}  w_H & = &  \left|L_H - c^2 I_H \right|^{-1} R \left(r - B w^{pre}\right), \\
\label{eqn:cgc-2}	 w^{cgc} & = & w^{pre} + P w_H.
\end{eqnarray}
%	\end{equation}
%where $P$ and $R$ are prolongation and restriction operators, respectively. 
	\item \emph{Postsmoothing}. Apply $\nu$ smoothing steps: % with the initial guess $w^{(0)} = w^{cgc}$:
\begin{equation}\label{eqn:post}
w^{(i+1)} = w^{(i)} + M^{-*} (r - B w^{(i)}),  \ i = 0,\ldots,\nu - 1, \ w^{(0)} = w^{cgc},
\end{equation}
where $M$ and $\nu$ are the same as in step~1. 
Return $w = w^{post} = w^{(\nu)}$. 
%Set $w^{post} = w^{(\nu)}$.  
 \end{enumerate}
\end{algorithm}
\vspace{.1in}

In (\ref{eqn:cgc-1}) we assume that $\left|L_H - c^2 I_H \right|$ is nonsingular,
i.e., $c^2$ is different from any eigenvalue of $L_H$. 
%The number of smoothing steps in~(\ref{eqn:pre}) and~(\ref{eqn:post})
%is the same; 
The presmoother is defined by the nonsingular $M$, while the postsmoother is
delivered by $M^{*}$.
Note that the (inverted) absolute value appears only on the coarse grid, while
the fine grid computations are based on the approximation~$B$.

It is immediately seen that if $B = |L - c^2 I|$, Algorithm~\ref{alg:g2g} represents 
a formal two-grid cycle~\cite{Briggs.Henson.McCormick:00, Trottenberg.Oosterlee.Schuller:01} 
for system
\begin{equation}\label{eqn:av_sys}
\left|L - c^2 I\right| z = r.
\end{equation} 
Note that the introduced scheme is rather general in that different choices
of approximations $B$ and smoothers $M$ lead to different preconditioners. We address these
choices in more detail in the following subsections.  
%further in the paper.  

%\begin{equation}\label{eqn:av_lowrank}
%|A| = A +  2V_p |\Lambda_p| V_p^*, 
%\end{equation}

%Algorithm~\ref{alg:g2g} can be viewed as a modified two-grid cycle for linear system $\left|L - c^2 I\right| z = r$. 
%%with $A = L - c^2 I$, 
%Here, the computationally expensive or unavailable absolute value of the fine-grid operator, $\left| L - c^2 I \right|$, 
%is replaced by the easily accessible negative Laplacian 
%$L = \left|L\right|$ at
%smoothing steps~(\ref{eqn:pre}) and~(\ref{eqn:post}), 
%as well as in the restricted residual in~(\ref{eqn:cgc-1}).
%The operator $L - c^2 I$ appears only at the coarse grid~(\ref{eqn:cgc-1}).
%
%If used repeatedly, 
%Algorithm~\ref{alg:g2g} does not solve any linear system. 
%%with coefficient matrices
%%~$\left|L - c^2 I\right| z = r$.
%However, 
%as seen later, its use as a preconditioner (along with the respective MG extension described below) 
%significantly accelerates the convergence of an iterative scheme
%applied to model problem~(\ref{eqn:helmholtz_fd}) with a relatively small shift value; moreover, the
%convergence is independent of the mesh size. 

%Two-grid 
It can be verified that the AV preconditioner given by Algorithm~\ref{alg:g2g} implicitly
constructs a mapping $r \mapsto w = T_{tg} r$, where the operator is %$T = T_{tg}$ has the following structure:
\begin{equation}\label{eqn:2grid_struct}
T_{tg}  =  \left(I - M^{-*} B \right)^{\nu} P \left| L_H - c^2 I_H \right|^{-1} R \left(I -  B M^{-1} \right)^{\nu} + F,
\end{equation}
with $F = B^{-1} - \left(I - M^{-*} B \right)^{\nu} B^{-1}\left(I -  B M^{-1}\right)^{\nu}$. 
The fact that the constructed preconditioner $T = T_{tg}$ 
is SPD follows directly from the observation that the first term in~(\ref{eqn:2grid_struct}) is
SPD provided that $P = \alpha R^*$ for some nonzero scalar $\alpha$, while
the second term $F$ is SPD if the spectral radii of $I - M^{-1} B$ and
$I - M^{-*} B$ are less than $1$. The latter condition requires the pre- and postsmoothing iterations~(\ref{eqn:pre}) 
and~(\ref{eqn:post}) to represent convergent methods 
for 
%system 
%system (\ref{eqn:helmholtz_fd}) with $c = 0$ and $b = r$ 
%(i.e., for 
% the discrete Poisson equation 
%\begin{equation}\label{eqn:poisson}
$
B y = r.
$
%\end{equation}
%on their own. 
%We note that 
Note that 
the above argument 
%for the operator $T = T_{tg}$ to be SPD 
essentially repeats the one used to justify symmetry and positive definiteness of a 
preconditioner based on the
standard 
two-grid cycle for an SPD system; see, e.g.,~\cite{Bramble.Zhang:00, Tatebe:93}.  
%applied within an iterative scheme, e.g., the preconditioned conjugate gradient method (PCG), to solve an SPD system; 
%see, e.g., \cite{Bramble.Zhang:00, Tatebe:93}.  

In this paper we consider two different choices of the approximation $B$.
The first choice is given by $B = L$, i.e., it is suggested to approximate 
the absolute value $|L-c^2 I|$ by the Laplacian $L$. The second choice
is delivered by $B = p_{m} (L - c^2 I)$, where $p_{m}$ is a polynomial
of degree at most $m$ such that $p_m(L - c^2 I) \approx |L - c^2 I|$.              
\subsection{Algorithm~\ref{alg:g2g} with $B =  L$}\label{subsec:BLaplacian}
If $B = L$, Algorithm~\ref{alg:g2g} 
%can be viewed from two different perspectives.
%On the one hand, it 
can be regarded as a step of a standard two-grid
method~\cite{Briggs.Henson.McCormick:00, Trottenberg.Oosterlee.Schuller:01} applied
to the Poisson equation
\begin{equation}\label{eqn:poisson}
L y = r,
\end{equation}
modified by replacing the operator $L_H$ by $|L_H - c^2 I_H|$
on the coarse grid. The question remains if the algorithm 
%Algorithm~\ref{alg:g2g}
delivers a form of an approximate solve for absolute value problem~(\ref{eqn:av_sys}),
and hence is suitable for AV preconditioning of~(\ref{eqn:helmholtz_fd}).  
To be able to answer this question, we analyze 
the propagation of the initial error 
$e_0^{\mbox{{\tiny AV}}} = |L - c^2 I|^{-1} r$
of~(\ref{eqn:av_sys}) under the action of the algorithm. %Algorithm~\ref{alg:g2g}.
%i.e.,
%we consider the two-grid preconditioner as an inexact solve for 
%the absolute value problem~(\ref{eqn:av_sys}).   
%
%We start by relating errors of equations~(\ref{eqn:av_sys}) and~(\ref{eqn:poisson}).

We start by relating errors of~(\ref{eqn:av_sys}) and~(\ref{eqn:poisson}).
\begin{lemma}\label{lem:error}
Given a vector $w$, consider errors $e^{\mbox{{\tiny AV}}}(w) = |L-c^2I|^{-1} r - w$ 
and $e^{\mbox{{\tiny P}}} (w) = L^{-1}r - w$
for~(\ref{eqn:av_sys}) and~(\ref{eqn:poisson}), respectively. 
Then 
\begin{equation}\label{eqn:error}
e^{\mbox{{\tiny AV}}} (w)=  e^{\mbox{{\tiny P}}} (w)+  (c^2 I - W_p) L^{-1} |L - c^2I|^{-1}r,
\end{equation}
where $W_p = 2 V_p |\Lambda_p| V_p^*$, $V_p$ is the matrix of eigenvectors of $L - c^2 I$
corresponding to the $p$ negative eigenvalues $\lambda_1 \leq \ldots \leq \lambda_p < 0$, and 
$|\Lambda_p| = \mbox{diag}\left\{ |\lambda_1|, \ldots, |\lambda_p| \right\}$.
\end{lemma}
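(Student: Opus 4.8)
The plan is to compute $e^{\text{\tiny AV}}(w) - e^{\text{\tiny P}}(w)$ directly, which by the definitions equals $\left(|L-c^2I|^{-1} - L^{-1}\right)r$, and then to show this difference equals the stated correction term $(c^2 I - W_p) L^{-1}|L-c^2I|^{-1}r$. Since both $L$ and $L-c^2I$ are diagonalized by the same orthogonal matrix $V = [V_p \mid V_{n-p}]$ of eigenvectors of $L$ (the shift does not change eigenvectors), everything reduces to a scalar identity on each eigenspace. So first I would write $L = V\Lambda V^*$ with eigenvalues $\sigma_k > 0$ of $L$, so that $L - c^2I$ has eigenvalues $\lambda_k = \sigma_k - c^2$, with $\lambda_k < 0$ exactly for $k = 1,\ldots,p$ (after ordering) and $\lambda_k > 0$ for $k = p+1,\ldots,n$.

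On the subspace spanned by the $k$-th eigenvector, $|L-c^2I|^{-1} - L^{-1}$ acts as the scalar $\frac{1}{|\lambda_k|} - \frac{1}{\sigma_k}$. I would verify the scalar identity
\[
\frac{1}{|\lambda_k|} - \frac{1}{\sigma_k} = \frac{c^2 - 2\,[\![k \le p]\!]\,|\lambda_k|}{\sigma_k\,|\lambda_k|},
\]
where $[\![k \le p]\!]$ is $1$ when $\lambda_k < 0$ and $0$ otherwise. For a positive eigenvalue, $|\lambda_k| = \sigma_k - c^2$, and the left side is $\frac{1}{\sigma_k - c^2} - \frac{1}{\sigma_k} = \frac{c^2}{\sigma_k(\sigma_k-c^2)}$, matching the right side with the indicator equal to $0$. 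For a negative eigenvalue, $|\lambda_k| = c^2 - \sigma_k$, and the left side is $\frac{1}{c^2-\sigma_k} - \frac{1}{\sigma_k} = \frac{2\sigma_k - c^2}{\sigma_k(c^2-\sigma_k)}$; writing the numerator as $-(c^2 - 2\sigma_k) = -(c^2 - 2(c^2 - |\lambda_k|)) = -(2|\lambda_k| - c^2) = c^2 - 2|\lambda_k|$ confirms the match with the indicator equal to $1$.

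Finally I would reassemble: the factor $\frac{1}{\sigma_k|\lambda_k|}$ is exactly the $k$-th eigenvalue of $L^{-1}|L-c^2I|^{-1}$, the scalar $c^2$ is the $k$-th eigenvalue of $c^2 I$, and the scalar $2|\lambda_k|$ restricted to the first $p$ eigenvectors (and $0$ on the rest) is precisely the $k$-th eigenvalue of $W_p = 2 V_p|\Lambda_p|V_p^*$. Hence $|L-c^2I|^{-1} - L^{-1} = (c^2 I - W_p)L^{-1}|L-c^2I|^{-1}$ as operators, because all three matrices $c^2 I$, $W_p$, and $L^{-1}|L-c^2I|^{-1}$ are simultaneously diagonalized by $V$ and therefore commute; adding $e^{\text{\tiny P}}(w)$ back to both sides gives \eqref{eqn:error}. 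I do not anticipate a genuine obstacle here — the only place to be careful is the sign bookkeeping for the negative eigenvalues and making sure the projector $V_pV_p^*$ onto the negative eigenspace is correctly inserted so that $W_p$ acts as $2|\lambda_k|$ for $k\le p$ and annihilates the positive eigenspace; the commutativity remark is what lets us pass freely from the scalar (eigenvalue-wise) identity to the matrix identity.
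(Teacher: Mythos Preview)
Your proof is correct and follows essentially the same approach as the paper: both compute $e^{\text{\tiny AV}}(w)-e^{\text{\tiny P}}(w)=\bigl(|L-c^2I|^{-1}-L^{-1}\bigr)r$ and then identify this with $(c^2I-W_p)L^{-1}|L-c^2I|^{-1}r$ using that $L$ and $L-c^2I$ share eigenvectors. The only cosmetic difference is that the paper works directly at the matrix level via the identity $|A|=A-2V_p\Lambda_pV_p^*$ (so $L-|L-c^2I|=c^2I-W_p$ in one line), whereas you unpack the same identity eigenvalue by eigenvalue.
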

\begin{proof}
Observe that for any $w$,
\begin{eqnarray}
\nonumber e^{\mbox{{\tiny AV}}} (w)& = & |L - c^2 I|^{-1} r  - w =  |L - c^2 I|^{-1} r  + (e^{\mbox{{\tiny P}}}(w) - L^{-1}r) \\
\nonumber                       & = & e^{\mbox{{\tiny P}}}(w) + |L - c^2 I|^{-1} L^{-1} (L - |L - c^2 I|)r. 
\end{eqnarray}
Denoting  $A = L - c^2 I$, we use the expression $|A| = A -  2V_p \Lambda_p V_p^*$ to get~(\ref{eqn:error})  
\end{proof}

Algorithm~\ref{alg:g2g}
transforms the initial error
$e_0^{\mbox{{\tiny P}}} = L^{-1}r$ of equation~(\ref{eqn:poisson}) 
into
\begin{equation}\label{eqn:eP}
e^{\mbox{{\tiny P}}} = S_2^{\nu} K S_1^{\nu} e_0^{\mbox{{\tiny P}}},
\end{equation}
where $S_1 = I - M^{-1} L$ and $S_2 =  I - M^{-*} L$ are pre- and postsmoothing
operators, $K = I - P|L_H - c^2 I_H|^{-1} R L$ corresponds to the coarse grid
correction step, and $e^{\mbox{{\tiny P}}} = L^{-1} r - w^{post}$.  
%
%On the other hand, %the algorithm 
%Algorithm~\ref{alg:g2g} %can be viewed as 
%represents a two-grid cycle for linear system 
%\begin{equation}\label{eqn:av_sys}
%\left|L - c^2 I\right| z = r,
%\end{equation} 
%%with $A = L - c^2 I$, 
%where the computationally expensive or unavailable absolute value of the fine-grid operator, $\left| L - c^2 I \right|$, 
%is substituted (approximated) by the easily accessible
%Laplacian $L = \left|L\right|$ at
%smoothing steps~(\ref{eqn:pre}),~(\ref{eqn:post}) and 
%%as well as 
%in the residual in~(\ref{eqn:cgc-1}).
%%Below, we consider the multigrid components of 
%%Algorithm~\ref{alg:g2g}
%%, i.e.,
%%the smoothing and coarse grid correction steps, 
%%in more detail.
%% effects of this substitution in more detail.
%%The following discussion shows that, nevertheless, this
%%allows constructing mesh-independent 
%%SPD preconditioners for model problem~(\ref{eqn:helmholtz_fd})
%%with relatively small shift values. 
%
%on the initial error $e_0^{\mbox{{\tiny AV}}} = |L - c^2 I|^{-1} r$
%for the absolute value system~(\ref{eqn:av_sys}). 
%
%
%The following lemma relates $e^{\mbox{{\tiny AV}}}$ to the error
%$e^{\mbox{{\tiny P}}}$ in 
% = L^{-1}r  - w^{(i)}$ for Poisson equation~(\ref{eqn:poisson})
%evaluated at the same vector $w^{(i)}$. 
%The superscripts ``AV'' and ``P'' abbreviate ``absolute value'' and ``Poisson'', respectively. 
%Clearly, the analysis below also holds for the case of postsmoothing~(\ref{eqn:post}) 
%with $w^{pre}$ replaced by $w^{post}$.
%
Denoting the error of absolute value system~(\ref{eqn:av_sys})
after applying Algorithm~\ref{alg:g2g} by
$e^{\mbox{{\tiny AV}}} = |L - c^2 I|^{-1} r - w^{post}$ and
observing that $e_0^{\mbox{{\tiny P}}} = |L-c^2 I| L^{-1}e_0^{\mbox{{\tiny AV}}}$,
by~(\ref{eqn:error})--(\ref{eqn:eP}) we obtain 
% 
%combining~(\ref{eqn:eP}) and~(\ref{eqn:error}), 
%and observing that $e_0^{\mbox{{\tiny P}}} = |L-c^2 I| L^{-1}e_0^{\mbox{{\tiny AV}}}$, 
%we obtain the
%following expression for the error: 
\begin{equation}\label{eqn:eAV}
e^{\mbox{{\tiny AV}}} = \left( S_2^{\nu} K S_1^{\nu}|L-c^2 I| 
+ c^2 I - W_p \right) L^{-1}e_0^{\mbox{{\tiny AV}}}.
\end{equation}
The last expression gives an explicit form of the desired error propagation operator,
which we denote by $G$: 
\begin{equation}\label{eqn:G}
G = \left( S_2^{\nu} K S_1^{\nu}|L-c^2 I| + c^2 I - W_p \right) L^{-1}.
\end{equation}

Below, as a smoother, 
%in Algorithm~\ref{alg:g2g}, 
%we use the $\omega$-damped Jacobi iteration, i.e.,
%we use 
we use a simple Richardson's iteration,
i.e., $S_1 = S_2 = I - \tau L$, 
%in~(\ref{eqn:eAV}) and~(\ref{eqn:G}),
where $\tau$ is an iteration parameter. 
%, $\tau = \omega h^2/2$. 
The restriction $R$ is given by the full weighting
and the prolongation $P$ by the standard piecewise linear interpolation; see~\cite{Briggs.Henson.McCormick:00, Trottenberg.Oosterlee.Schuller:01}.  
%is used 
%for the restriction $R$  
%and the standard piecewise linear interpolation for the prolongation $P$; 
%see~\cite{Briggs.Henson.McCormick:00, Trottenberg.Oosterlee.Schuller:01}.  

At this point, in order to simplify further presentation, let us refer to the
one-dimensional analogue~(\ref{eqn:helmholtz_bvp1D})
of model problem~(\ref{eqn:helmholtz_bvp}).  
%\begin{equation}\label{eqn:helmholtz_bvp1D}
%%\begin{array}{ll}
%-\displaystyle  u^{''} (\mathrm x) - c^2 u(\mathrm x)  =  f(\mathrm x), \  u(0) = u(1) = 0.   
%%\end{array}
%\end{equation}
%
In this case, the matrix $L$ is
tridiagonal: $L = \mbox{tridiag}\left\{ -1/h^2,2/h^2,-1/h^2 \right\}$.
We assume that $n$, the number of interior grid nodes, is odd: $h = 1/(n+1)$. 
The coarse grid is then obtained by dropping the odd-numbered nodes.
We denote 
the size of the coarse grid problem by $N = (n+1)/2-1$; $H = 1/(N+1) = 2h$.
The tridiagonal matrix $L_H$ denotes the discretization of the 1D
Laplacian on the coarse level.   

Recall that the eigenvalues of $L$ are $\theta_j = \frac{4}{h^2} \sin^2 \frac{j \pi h}{2}$
with corresponding eigenvectors $v_j = \sqrt{2h} \left[ \sin l j \pi h \right]_{l=1}^n$.
Similarly, the eigenvalues of $L_H$ are $\theta_j^H = \frac{4}{H^2} \sin^2 \frac{j \pi H}{2}$, 
and the coarse grid eigenvectors are denoted by 
$v_j^H = \sqrt{2H} \left[ \sin l j \pi H \right]_{l=1}^N$. 
It is clear that operators $L - c^2 I$ and $L_H-c^2 I_H$ 
have the same sets of eigenvectors as $L$ and $L_H$
with eigenvalues $t_{j} = \theta_{j} - c^2$ and 
$t_{j}^H = \theta_{j}^H - c^2$, respectively.

Let $e_0^{\mbox{{\tiny AV}}} = \sum_{j=1}^n \alpha_j v_j$ 
be the expansion of the initial error in the eigenbasis of~$L$.   
Since $e^{\mbox{{\tiny AV}}}  = G e_0^{\mbox{{\tiny AV}}} = \sum_{j=1}^n \alpha_j (G v_j)$,
we are interested in the action of the error propagation operator 
(\ref{eqn:G}) on the eigenmodes $v_j$. 
\iftoggle{report}{We first consider the images of $v_j$ under the action of the
coarse grid correction operator $K$.

The action of the full weighting operator on eigenvectors $v_j$ is well known~\cite{Briggs.Henson.McCormick:00}:   
\begin{equation}\label{eqn:Rv}
R v_j = \left\{ 
\begin{array}{ll}
\displaystyle c_j^{2} v_j^{H}, & j = 1,\ldots,N, \\
\displaystyle 0, & j = N+1, \\
\displaystyle -c_j^2 v^{H}_{n+1 -j}, & j = N+2, \ldots,n.
\end{array}
\right.
\end{equation}  
Here, $c_j = \cos \frac{j\pi h}{2}$. With $s_j = \sin \frac{j \pi h}{2}$,
the action of the piecewise linear interpolation is written as 
\begin{equation}\label{eqn:Pv}
P v_j^H = c_j^2 v_j - s_j^2 v_{n+1-j}, \ j = 1,\ldots,N,
\end{equation}
where $v_{n+1-j}$ is the so-called \textit{complementary} mode of $v_j$~\cite{Briggs.Henson.McCormick:00}.
Combining~(\ref{eqn:Rv}) and~(\ref{eqn:Pv}) leads to the following expression for $Kv_j$:
} % end \iftoggle{report} 
{

The action of the operators $R$ and $P$ on $v_j$ and $v^{H}_j$, respectively, 
is well known; see, e.g.,~\cite[pp.~80--81]{Briggs.Henson.McCormick:00}. Thus, 
it is easy to obtain the following expression for~$Kv_j$:    
}
\begin{equation}\label{eqn:Kv}
K v_j = \left\{ 
\begin{array}{ll}
\displaystyle \left( 1 - c_j^{4} \frac{\theta_j}{|t_j^H|} \right) v_j + s_j^2 c_j^2 \frac{\theta_j}{|t_j^{H}|} v_{n+1-j}, & j = 1,\ldots,N, \\
\displaystyle v_j, & j = N+1, \\
\displaystyle \left( 1 - c_j^4 \frac{\theta_j}{|t^H_{n+1-j}|}\right)v_j   + s_j^2 c_j^2 \frac{\theta_j}{|t_{n+1-j}^H|}v^{H}_{n+1 -j}, & j = N +2, \ldots,n.
\end{array}
\right.
\end{equation}  
\iftoggle{report}{}
{
Here, $c_j = \cos \frac{j\pi h}{2}$ and $s_j = \sin \frac{j \pi h}{2}$.
}Since $v_j$ are the eigenvectors of $S_1 = S_2 = I - \tau L$, 
$L-c^2I$, $L^{-1}$ and $W_p$,~(\ref{eqn:G}) leads to 
explicit expressions for $Gv_j$.
\begin{theorem}\label{thm:G}
%Let $S_1 = S_2 = I - \tau L$. 
Let $c^2 < \theta_{N+1} = 2/h^2$. Then the error propagation operator $G$ in~(\ref{eqn:G}) acts on the 
eigenvectors $v_j$ of 1D Laplacian as follows: 
\begin{equation}\label{eqn:Gv}
G v_j = \left\{ 
\begin{array}{ll}
\displaystyle  g_j^{(11)} v_j + g_j^{(12)} v_{n+1-j}, & j = 1,\ldots,N, \\
\displaystyle  g_j v_j, & j = N+1, \\
\displaystyle g_j^{(21)} v_j   + g_j^{(22)} v_{n+1 -j}, & j = N +2, \ldots,n,
\end{array}
\right.
\end{equation}  
where
\begin{eqnarray}
\label{eqn:g11} 
g_j^{(11)} & = &(1 - \tau \theta_j)^{2\nu} \left( 1 - c_j^4 \frac{\theta_j}{|t_j^H|}\right) \frac{|t_j|}{\theta_j} + \frac{c^2}{\theta_j} - \frac{\beta_j}{\theta_j}  \; , \\ 
\label{eqn:g12}
g_j^{(12)} & = & (1 - \tau \theta_j)^{\nu} s_j^2 c_j^2 \frac{|t_j|}{|t_j^{H}|} (1 - \tau \theta_{n+1-j})^{\nu} \; ,  \\
\label{eqn:g}
g_j & =  & (1-\tau \theta_j)^{2 \nu} \frac{|t_j|}{\theta_j} + \frac{c^2}{\theta_j} \; , \\
\label{eqn:g21}
g_j^{(21)} & = &(1 - \tau \theta_j)^{2\nu} \left( 1 - c_j^4 \frac{\theta_j}{|t_{n+1-j}^H|}\right) \frac{|t_j|}{\theta_j} + \frac{c^2}{\theta_j} \;, \\ 
\label{eqn:g22}
g_j^{(22)} & = & (1 - \tau \theta_j)^{\nu} s_j^2 c_j^2 \frac{|t_j|}{|t_{n+1-j}^{H}|} (1 - \tau \theta_{n+1-j})^{\nu} \; ; 
\end{eqnarray}
%and 
%\begin{equation}\label{eqn:beta}
and 
$\beta_j = \left\{ 
\begin{array}{ll}
%\displaystyle  2|\lambda_j|, & \theta_j < c^2, \\
\displaystyle  2(c^2 - \theta_j), & \theta_j < c^2, \\
\displaystyle  0, & \theta_j > c^2 
\end{array}
\right.
$.
%\end{equation}  
% \left( 1 - c_j^{4} \frac{\mu_j}{|\lambda_j^H|} \right)
\end{theorem}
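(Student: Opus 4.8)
The plan is to substitute the known actions of each constituent operator in \eqref{eqn:G} on the eigenvectors $v_j$ and simply collect terms, treating the three index ranges $j = 1,\ldots,N$; $j = N+1$; and $j = N+2,\ldots,n$ separately because the coarse-grid-correction operator $K$ behaves differently on each (as recorded in \eqref{eqn:Kv}). First I would recall that $v_j$ is a common eigenvector of $S_1 = S_2 = I - \tau L$, of $L - c^2 I$, of $L^{-1}$, and of $W_p$: specifically $S_1 v_j = (1-\tau\theta_j) v_j$, $(L - c^2 I) v_j = t_j v_j$, $L^{-1} v_j = \theta_j^{-1} v_j$, and $W_p v_j = \beta_j v_j$, where $\beta_j = 2(c^2 - \theta_j)$ when $\theta_j < c^2$ (i.e.\ $v_j$ is in the negative eigenspace, so $|t_j| = c^2 - \theta_j$ and $2|\lambda|$-type weight applies) and $\beta_j = 0$ otherwise; this last identity is exactly the statement $W_p = 2 V_p |\Lambda_p| V_p^*$ from Lemma~\ref{lem:error} read eigenvector-by-eigenvector. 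Then $(c^2 I - W_p) L^{-1} v_j = (c^2 - \beta_j)\theta_j^{-1} v_j$, which produces the additive tails $c^2/\theta_j - \beta_j/\theta_j$, $c^2/\theta_j$, and $c^2/\theta_j$ appearing in \eqref{eqn:g11}, \eqref{eqn:g}, \eqref{eqn:g21} respectively (with $\beta_j = 0$ for $j \geq N+1$).

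Next I would handle the first, and only nontrivial, summand $S_2^\nu K S_1^\nu |L - c^2 I| L^{-1}$ of $G$. Applying the operators from right to left to $v_j$: $|L - c^2 I| L^{-1} v_j = (|t_j|/\theta_j) v_j$, then $S_1^\nu$ contributes the scalar $(1-\tau\theta_j)^\nu$, so we arrive at $(1-\tau\theta_j)^\nu (|t_j|/\theta_j) v_j$ before hitting $K$. For $j = 1,\ldots,N$, \eqref{eqn:Kv} turns $v_j$ into $(1 - c_j^4 \theta_j/|t_j^H|) v_j + s_j^2 c_j^2 (\theta_j/|t_j^H|) v_{n+1-j}$; since $n+1-j \geq N+2$ is itself an eigenvector of $S_2 = I - \tau L$ with eigenvalue $1-\tau\theta_{n+1-j}$, applying $S_2^\nu$ multiplies the $v_j$ component by $(1-\tau\theta_j)^\nu$ and the $v_{n+1-j}$ component by $(1-\tau\theta_{n+1-j})^\nu$. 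Multiplying through collects the factor $(1-\tau\theta_j)^{2\nu}$ onto the diagonal piece, giving $g_j^{(11)}$, and the mixed factor $(1-\tau\theta_j)^\nu(1-\tau\theta_{n+1-j})^\nu$ onto the off-diagonal piece, giving $g_j^{(12)}$ after rewriting $(\theta_j/|t_j^H|)\cdot(|t_j|/\theta_j) = |t_j|/|t_j^H|$. The case $j = N+1$ is immediate because $Kv_{N+1} = v_{N+1}$ and $S_2^\nu$ just gives one more $(1-\tau\theta_j)^\nu$, yielding $g_j$. The case $j = N+2,\ldots,n$ is symmetric to the first: here $Kv_j = (1 - c_j^4 \theta_j/|t_{n+1-j}^H|)v_j + s_j^2 c_j^2(\theta_j/|t_{n+1-j}^H|) v_{n+1-j}^H$, and one checks that $v_{n+1-j}^H$, viewed on the fine grid through $P$, is already absorbed (or one simply carries it as in \eqref{eqn:Kv}); since $S_2 v_j = (1-\tau\theta_j)v_j$ the diagonal coefficient picks up $(1-\tau\theta_j)^{2\nu}$, producing $g_j^{(21)}$, and the off-diagonal coefficient is handled analogously to yield $g_j^{(22)}$.

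The role of the hypothesis $c^2 < \theta_{N+1} = 2/h^2$ is to guarantee that all coarse-grid operators $|t_j^H|$ that appear as denominators are well defined and, more importantly, that the sign pattern used in \eqref{eqn:Kv} is the correct one: it ensures $\theta_{N+1} - c^2 > 0$, so that the modes $v_j$ with $j \geq N+1$ are not in the negative eigenspace of $L - c^2 I$, which is what makes $\beta_j = 0$ for those indices and keeps $|t_j^H|$ matched with the right branch of the absolute value. I would note this at the point where \eqref{eqn:Kv} is invoked. The main obstacle, such as it is, is purely bookkeeping: keeping straight which of $\theta_j$ and $\theta_{n+1-j}$ governs the pre- versus the post-smoothing factor on the off-diagonal (complementary) mode, and correctly simplifying the products of the ratios $\theta_j/|t_j^H|$ and $|t_j|/\theta_j$ into the stated forms $|t_j|/|t_j^H|$; there is no analytic difficulty, only the risk of an index slip, so I would carry out the $j=1,\ldots,N$ case in full and remark that $j = N+2,\ldots,n$ follows by the same computation with $t_j^H$ replaced by $t_{n+1-j}^H$.
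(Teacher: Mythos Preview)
Your proposal is correct and follows exactly the approach the paper indicates: the paper itself offers no more than the single sentence that $v_j$ are common eigenvectors of $S_1=S_2=I-\tau L$, $L-c^2I$, $L^{-1}$, and $W_p$, so that combining this with~\eqref{eqn:Kv} in the expression~\eqref{eqn:G} yields the stated formulas. Your write-up simply spells out that computation, including the role of $c^2<\theta_{N+1}$ in forcing $\beta_j=0$ for $j\ge N+1$; the only caveat is that in the third branch of~\eqref{eqn:Kv} the paper's $v^{H}_{n+1-j}$ is a typo for the fine-grid vector $v_{n+1-j}$ (as your parenthetical acknowledges), and you should carry it as such rather than re-prolongating.
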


\vspace{.07in}

Theorem~\ref{thm:G} implies that for relatively small shifts, 
Algorithm~\ref{alg:g2g} with $B = L$ and a proper choice of $\tau$ and $\nu$ 
reduces the error of~(\ref{eqn:av_sys}) in the directions of almost 
all eigenvectors $v_j$. In a few directions, however, the error may be amplified. 
These directions are given by the smooth eigenmodes associated
with $\theta_j$ 
%in the vicinity of the shift 
that are close to~$c^2$ on the right, as well as 
with $\theta_j$ that are distant from $c^2$ on the left. The number of the latter,
if any, is small if $ch$ is sufficiently small, and becomes larger as $ch$ increases.
%We discuss these findings in more detail below.   
%Additionally, as has been
%previously noted, if the number of smoothing steps $\nu$ is not large enough,
%the error may be amplified in the directions of a few oscillatory 
%eigenmodes, complementary to the smooth $v_j$ for which $\theta_j^H \approx c^2$.         

%Let $\omega = 2/3$, so that $\tau = h^2/3$. 
Indeed, let $\tau = h^2/3$, so that $|1 - \tau \theta_j| < 1$ for all $j$ and 
$|1 - \tau \theta_j| < 1/3$ for $j>N$. 
%i.e., 
This choice of the parameter provides 
%It is well known that this choice gives
%%The latter represents 
the least uniform bound for $|1 - \tau \theta_j|$ 
that correspond to the oscillatory eigenmodes~\cite[p.415]{Saad:03}. 
%
%In this case, 
It is then readily seen that~(\ref{eqn:g12}) and~(\ref{eqn:g22})
can be made arbitrarily small within a reasonably small number~$\nu$ of smoothing steps.
Similarly,~(\ref{eqn:g}) and~(\ref{eqn:g21}) can be made arbitrarily close to~$c^2/\theta_j<1$.
%where the inequlaity follows from the theorem assumption. 
%Note that 
If $c^2 << \theta_{N+1}$, then $c^2/\theta_j$ in~(\ref{eqn:g}) and~(\ref{eqn:g21})
is close to zero. 
Thus, 
%the theorem 
Theorem~\ref{thm:G} 
shows that for relatively small shifts, smoothing  
% of Algorithm~\ref{alg:g2g} with $B =  L$
provides small values of~(\ref{eqn:g12})--(\ref{eqn:g22})
and, hence, damps of the oscillatory part of the error. 
Note that the damping occurs even though the smoothing is performed with respect to~(\ref{eqn:poisson}),
not~(\ref{eqn:av_sys}).

Now let us consider~(\ref{eqn:g11}). 
%, which is not significantly reduced by smoothing.
%
Theorem~\ref{thm:G} shows that if 
%reveals a potential difficulty that is given by the case where 
$c^2$ is close to an eigenvalue $\theta_j^H$ of the coarse-level Laplacian, i.e., if
$t_j^H \approx 0$, then
%% 
%the magnitude of the 
the corresponding reduction coefficient~(\ref{eqn:g11}) 
can be large. This means that Algorithm~\ref{alg:g2g} with $B=L$ has a potential
difficulty of amplifying  
%meaning that 
the error in the directions of a few smooth eigenvectors.
%Additionally,
%if $\nu$ is too small, the corresponding coefficients~(\ref{eqn:g12}),~(\ref{eqn:g21}), and~(\ref{eqn:g22}) 
%may be large, i.e., 
%the error may also be amplified in the direction of the complementary oscillatory eigenmode. 
%%
%Note that a 
Similar effect is known to appear for
standard MG methods applied to Helmholtz type problems; %~(\ref{eqn:helmholtz_fd});
see~\cite{Brandt.Taasan:86, Elman.Ernst.OLeary:01}. 
Below, we analyze~(\ref{eqn:g11}) in more detail. % for $\theta^H_j$ (and $\theta_j$) that are distant from $c^2$. 

%Now let us consider~(\ref{eqn:g11}) in more detail.
Let $\theta_j > c^2$. 
Then, using the relation $\theta_j^H = c_j^2 \theta_j$, we can write~(\ref{eqn:g11}) as
%the expression for $g_j^{(11)}$ takes the form
%\begin{eqnarray}
%\nonumber g_j^{(11)} & = &(1 - \tau \mu_j)^{2\nu} \left( 1 - c_j^4 \frac{\mu_j}{|\lambda_j^H|}\right) \frac{|\lambda_j|}{\mu_j} + \frac{c^2}{\mu_j} \\ 
%\nonumber            & = &(1 - \tau \mu_j)^{2\nu} \left( 1 - c_j^2 \frac{1}{|1 - (c^2 / \mu_j^H )|}\right) \left( 1 - \frac{c^2}{\mu_j} \right) + \frac{c^2}{\mu_j},
%\end{eqnarray}
\[
g_j^{(11)}  = (1 - \tau \theta_j)^{2\nu} \left( 1 - c_j^2 \frac{1}{|1 - c^2 / \theta_j^H |}\right) \left( 1 - \frac{c^2}{\theta_j} \right) + \frac{c^2}{\theta_j} \; .
\]
%where $\theta_j^H = c_j^2 \theta_j$ by the definition of $\theta_j$ and $\theta_j^H$.
%
Here, it is easy to see that as $c^2/\theta_j^H \rightarrow 0$, 
%and hence $c^2/\theta_j \rightarrow 0$, 
$g_j^{(11)} \rightarrow (1-\tau \theta_j)^{2\nu}s_j^2 < 1/2$,
meaning that
%i.e.,
%
%In other words, 
%reduction coefficients~(\ref{eqn:g11}) corresponding to 
%$\theta_j$, such that 
%Thus, 
the smooth eigenmodes corresponding to
$\theta_j$ away from $c^2$ 
on the right 
are well~damped. 
%by the coarse grid
%correction. 
%, are significantly 
%smaller than $1$.
% enough
%to ensure efficient error reduction in the direction of the associated smooth eigenvectors.  
%
%Note that for $j \leq N$, $|1 - \tau \theta_j|$ are close to $1$, i.e.,~(\ref{eqn:g11}) is not
%much influenced by smoothing.  

If $\theta_j < c^2$, then~(\ref{eqn:g11}) takes the form 
%can be written as
%\begin{eqnarray}
%\nonumber g_j^{(11)} & = &(1 - \tau \mu_j)^{2\nu} \left( 1 - c_j^4 \frac{\mu_j}{|\lambda_j^H|}\right) \frac{|\lambda_j|}{\mu_j} + \left( 2 - \frac{c^2}{\mu_j} \right)\\ 
%\nonumber            & = &(1 - \tau \mu_j)^{2\nu} \left( 1 - c_j^2 \frac{1}{|1 - (c^2/\mu_j^H)|}\right) \left( \frac{c^2}{\mu_j} - 1\right) + \left( 2 - \frac{c^2}{\mu_j} \right).
%\end{eqnarray}
\[
%g_j^{(11)}  = (1 - \tau \theta_j)^{2\nu} \left( 1 - c_j^2 \frac{1}{(c^2/\theta_j^H) - 1}\right) \left( \frac{c^2}{\theta_j} - 1\right) + \left( 2 - \frac{c^2}{\theta_j} \right).
%\begin{equation}\label{eqn:g11_new}
g_j^{(11)}  = (1 - \tau \theta_j)^{2\nu} \left(\frac{c^2/\theta_j - c_j^2 - c_j^4}{c^2/\theta_j - c_j^2}\right) \left( \frac{c^2}{\theta_j} - 1\right) + \left( 2 - \frac{c^2}{\theta_j} \right).
%\end{equation}
\]
Since $c_j^2 \in (1/2,1)$, for any $c^2/\theta_j > 1$, we can obtain the bound
\[
\frac{c^2/\theta_j - 2}{c^2/\theta_j - 1} \leq \frac{c^2/\theta_j - c_j^2 - c_j^4}{c^2/\theta_j - c_j^2} \leq \frac{c^2/\theta_j - 3/4}{c^2/\theta_j-1/2} \; . 
\]
Additionally, $3^{-2 \nu} < (1 - \tau \theta_j)^{2\nu} < 1$.
%provided that $\tau = h^2/3$. 
%The inequlaities lead to the bound % on~(\ref{eqn:g11_new}): % in terms of $c^2/\theta_j$,
Thus,
\[
l_j < g_j^{(11)} < \frac{3(c^2/\theta_j)-1}{4(c^2/\theta_j)-2} \; , 
%\qquad
%l_j = \left\{ 
%\begin{array}{ll}
%%\displaystyle  2|\lambda_j|, & \theta_j < c^2, \\
%\displaystyle  0, & 1 < c^2/\theta_j < 2 \; , \\ 
%\displaystyle  \frac{8}{9}(2 - c^2/\theta_j), &  c^2/\theta_j \geq  2 \; .
%\end{array}
%\right.
\]
where $l_j = 0$ if $1 < c^2/\theta_j \leq 2$, and $l_j = 2 - c^2/\theta_j$ if $c^2/\theta_j > 2$.

The inequality implies that $|g_j^{(11)}|<1$ for $1 < c^2 / \theta_j \leq 3$, i.e.,
the algorithm 
%coarse grid correction 
reduces the error in the directions of several smooth
eigenvectors associated with $\theta_j$ to the left of $c^2$. 
%
%as $c^2/\theta_j \rightarrow 2$, $g_j^{(11)} \rightarrow \displaystyle (1 - \tau \theta_j)^{2\nu} \left( 1 - c_j^2 / |1 - 2/c_j^2|\right)$,
%where
%%\[ 
%$
%\displaystyle 0 < 1 - c_j^2/|1 - (2/c_j^2)| < 5/6.
%$
%%\]
%
At the same time, we note that as $c^2/\theta_j \rightarrow \infty$, $g_j^{(11)} \rightarrow \infty$, i.e.,
%
%Therefore, 
the smooth eigenmodes corresponding to $\theta_j$ 
that are distant from $c^2$ on the left can be amplified. 
%
%Summarizing the findings of this subsection, we have shown that 
%for sufficiently small $ch$, Algorithm~\ref{alg:g2g} with $B = L$ and a proper choice of 
%the parameters $\tau$ and $\nu$ reduces the error of~(\ref{eqn:av_sys}) in the directions of almost 
%all eigenvectors $v_j$.
%In a few directions, however, the error may possibly be amplified. 
%These directions are given by the smooth eigenmodes associated
%with several $\theta_j$ in the vicinity of the shift $c^2$, as well as 
%$\theta_j$ that are distant from $c^2$ on the left. Additionally, as has been
%previously noted, if the number of smoothing steps $\nu$ is not large enough,
%the error may be amplified in the directions of a few oscillatory 
%eigenmodes, complementary to the smooth $v_j$ for which $\theta_j^H \approx c^2$.         
%
Clearly, if $ch$ is sufficiently small then the number of such
error components is not large (or none), and grows as $ch$ increases. 
%the number of such error components
%may increase. 
%However, 
%if $ch$ is sufficiently small, 
%%the amount of such ``pathological'' 
%%eigencomponents tends to be small, i.e., the total number
%%of the corrupted eigenmodes is small relative to the problem size. 
%%%

%%%This allows concluding that 
%then 
The above analysis shows that Algorithm~\ref{alg:g2g} with $B = L$ 
%%%can be interpreted 
indeed represents a solve for~(\ref{eqn:av_sys}), 
where the solution is approximated everywhere, possibly except for a       
subspace of a small dimension. 
%%
%%If Algorithm~\ref{alg:g2g} is used as a preconditioner,
%
In the context of preconditioning, this translates into the fact that
the preconditioned matrix has spectrum clustered around $1$ and $-1$
with a few outliers generated by the amplification of the smooth eigenmodes.   
If the shift is sufficiently small, the number of such outliers is not large, 
%the increase of
%outliers in the clustered spectrum of the preconditioned matrix, which 
%results in a larger number of 
which only slightly delays the convergence of the outer PMINRES iterations
and does not significantly affect the efficiency of the overall scheme. 
\subsection{Algorithm~\ref{alg:g2g} with $B = p_m (L - c^2 I)$}\label{subsec:Bpoly}
The analysis of the previous subsection suggests that the quality 
of Algorithm~\ref{alg:g2g} with $B=L$ may deteriorate as 
%the quantity 
$ch$ increases. This result is not surprising, since for larger $ch$
the relation $L \approx |L - c^2 I|$ becomes no longer meaningful.  
Below we introduce a different approach for approximating the fine grid
absolute value. In particular, we consider constructing \textit{polynomial approximations}
$B = p_m (L - c^2 I)$, where $p_m(\lambda)$ is a polynomial of degree
at most $m>0$, such that $p_m(L - c^2 I) \approx |L - c^2 I|$.

%The fact that Algorithm~\ref{alg:avp-gmg} requires sufficiently large sizes 
%of the coarsest grids is not surprising.
%Laplacians $L_l$ constitute deteriorating approximations of the 
%absolute values $|L_l - c^2I_l|$ in~(\ref{eqn:mg-pre}),~(\ref{eqn:mg-cgc-2}),
%and~(\ref{eqn:mg-post}) as $l$ decreases. Thus, 
%for sufficiently coarse grids, the relation 
%$L_l \approx |L_l - c^2 I_l|$ is no longer meaningful, which
%motivates the immediate coarse grid solve, resulting in the requirement 
%on the coarsest grids to be fine enough.  
%%
%As has been shown in the previous subsection, this is not a problem
%for small shifts. For sufficiently large shifts, however, the sizes of the 
%coarsest grids become so large that the whole computation is dominated
%by the coarse grid solves~(\ref{eqn:mg-cgc-1}).
%%
%The question remains if Algorithm~\ref{alg:avp-gmg}
%can be extended to deliver an efficient absolute value preconditioner 
%for systems with a higher level of indefiniteness. Below, we 
%show that this can be done by using \textit{polynomial approximations} 
%of absolute value operators on coarser grids.  

Let us first refer to the ideal particular case, where
$p_m (L - c^2 I) = |L - c^2 I|$. This can happen, e.g.,
if $p_m(\lambda)$ is an interpolating polynomial of $f(\lambda) = |\lambda|$
on the spectrum of $L - c^2 I$, $m = n-1$.
In such a situation, Algorithm~\ref{alg:g2g}
with $B = p_m(L - c^2 I)$ results in the following transformation 
of the initial error: % $e_0^{\mbox{{\tiny AV}}} = |L - c^2 I|^{-1} r$: 
\begin{equation}\label{eqn:eAVpoly}
e^{\mbox{{\tiny AV}}} =  \bar S_2^{\nu} \bar K \bar S_1^{\nu}  e_0^{\mbox{{\tiny AV}}},
\end{equation}
where $\bar S_1 = I - M^{-1} |L-c^2 I|$ and $\bar S_2 =  I - M^{-*} |L - c^2 I|$ are pre- and postsmoothing
operators, and $\bar K = I - P|L_H - c^2 I_H|^{-1} R |L-c^2 I|$ corresponds to the coarse grid
correction step. 
%, and $e^{\mbox{{\tiny AV}}} = |L - c^2 I|^{-1} r - w^{post}$.  
The associated error propagation operator is further denoted by $\bar G$, 
%i.e.,  
%If $p_m(L - c^2 I) = |L - c^2 I|$, then 
%Algorithm~\ref{alg:g2g}
%with $B = p_m(L - c^2 I)$ results in the following transformation 
%of the initial error $e_0^{\mbox{{\tiny AV}}} = |L - c^2 I|^{-1} r$: 
%\begin{equation}\label{eqn:eAVpoly}
%e^{\mbox{{\tiny AV}}} =  \bar S_2^{\nu} \bar K \bar S_1^{\nu}  e_0^{\mbox{{\tiny AV}}}.
%\end{equation}
\begin{equation}\label{eqn:Gpoly}
\bar G = \bar S_2^{\nu} \bar K \bar S_1^{\nu}.
\end{equation}  

For the purpose of clarity, we again consider the 1D 
counterpart~(\ref{eqn:helmholtz_bvp1D}) of the model problem. 
As a smoother, we choose Richardson's iteration 
with respect to absolute value system~(\ref{eqn:av_sys}), i.e.,
$\bar S_1 = \bar S_2 = I - \tau |L - c^2 I|$.
%, where $\tau$ is an 
%iteration parameter. 
%
It is important to note here that the eigenvalues 
$|t_j|$ of the absolute value operator are, in general, 
no longer ascendingly ordered with respect to $j$
%corresponds to the ascending
%order 
as is the case for $\theta_j$'s and $t_j$'s. Moreover,
in contrast to $L$ and $L - c^2 I$,
%for sufficiently large shifts $c^2$,  
the top part of the spectrum of $|L - c^2 I|$ may be associated with both 
smooth and oscillatory eigenmodes. 
%More precisely, if $c^2$ is sufficiently large, there exists $N^* \leq N$, 
%such that $|\lambda_j| = |\mu_j - c^2| > |\mu_{N+1} - c^2| = |\lambda_{N+1}|$
%for $j = 1,\ldots,N^*$, whereas the associated eigenmodes $v_j$ are smooth.
%
In particular, this means that Richardson's iteration
may fail to properly eliminate the oscillatory components of the error, 
which is an undesirable outcome of the smoothing procedure. 
To avoid this, we require that $|t_1| < t_{N+1}$. It is easy
to verify that the latter condition is fulfilled if 
%which is guaranteed by the condition
\begin{equation}\label{eqn:ch_coarse}
ch < 1.
\end{equation}  
Note that~(\ref{eqn:ch_coarse}) automatically holds if discretization
rule~(\ref{eqn:thumb}) is enforced. 
%Additionally, we remark that 
Repeating the above
argument for the 2D case also leads to~(\ref{eqn:ch_coarse}).

Let the restriction and prolongation operators $R$ and
$P$ be the same as in the previous subsection. 
%exactly the same as in Subsection~\ref{subsec:BLaplacian}. 
Similar to~(\ref{eqn:Kv}), 
%we use~(\ref{eqn:Rv}) and~(\ref{eqn:Pv}) to 
we obtain 
an explicit expression for the action of the coarse grid correction operator
$\bar K$ on eigenvectors $v_j$: 
\begin{equation}\label{eqn:Kv-poly}
\bar K v_j = \left\{ 
\begin{array}{ll}
\displaystyle \left( 1 - c_j^{4}  \frac{| t_j |}{| t_j^H |}  \right) v_j + s_j^2 c_j^2  \frac{|t_j|}{|t_j^{H}|}  v_{n+1-j}, & j = 1, \ldots ,N, \\
\displaystyle v_j, & j = N+1, \\
\displaystyle \left( 1 - c_j^4 \frac{|t_j|}{|t^H_{n+1-j}|} \right)v_j  + s_j^2 c_j^2 \frac{|t_j|}{|t_{n+1-j}^H|} v^{H}_{n+1 -j}, 
& j = N +2, \ldots ,n.
\end{array}
\right.
\end{equation} 

The following theorem is the analogue of Theorem~\ref{thm:G}. 
%for the 
%ideal case, where $B = p_m(L-c^2 I) = |L - c^2 I|$ 
%We can now obtain the explicit expression for the images of eigenmodes $v_j$ under the action of the 
%error propogation operator $\bar G$ in~(\ref{eqn:Gpoly}).
%Note that~(\ref{eqn:Kv-poly}) is the same, up to the occuring absolute values, as the expression
%for the coarse-grid correction operator in~\cite[Theorem 2.1]{Elman.Ernst.OLeary:01} of the standard two-grid scheme
%applied to~(\ref{eqn:helmholtz_fd}).

\begin{theorem}\label{thm:G-poly}
The error propagation operator $\bar G$ in~(\ref{eqn:Gpoly}) acts on the 
eigenvectors $v_j$ of the 1D Laplacian as follows: 
\begin{equation}\label{eqn:Gv-poly}
\bar G v_j = \left\{ 
\begin{array}{ll}
\displaystyle  \bar g_j^{(11)} v_j + \bar g_j^{(12)} v_{n+1-j}, & j = 1,\ldots,N, \\
\displaystyle  \bar g_j v_j, & j = N+1, \\
\displaystyle \bar g_j^{(21)} v_j   + \bar g_j^{(22)} v_{n+1 -j}, & j = N +2, \ldots,n,
\end{array}
\right.
\end{equation}  
where
\begin{eqnarray}
\label{eqn:g11-poly} 
\bar g_j^{(11)} & = &(1 - \tau |t_j|)^{2\nu} \left( 1 - c_j^4 \frac{|t_j|}{|t_j^H|}  \right), \\ 
\label{eqn:g12-poly}
\bar g_j^{(12)} & = & (1 - \tau |t_j|)^{\nu} s_j^2 c_j^2 \frac{|t_j|}{|t_j^{H}|} (1 - \tau |t_{n+1-j}|)^{\nu},  \\
\label{eqn:g-poly}
\bar g_j & =  & (1-\tau |t_j|)^{2 \nu}, \\
\label{eqn:g21-poly}
\bar g_j^{(21)} & = &  (1 - \tau |t_j|)^{2\nu} \left( 1 - c_j^4 \frac{|t_j|}{|t_{n+1-j}^H|}\right), \\ 
\label{eqn:g22-poly}
\bar g_j^{(22)} & = & (1 - \tau |t_j|)^{\nu} s_j^2 c_j^2 \frac{|t_j|}{|t_{n+1-j}^{H}|} (1 - \tau |t_{n+1-j}|)^{\nu}. 
\end{eqnarray}
\end{theorem}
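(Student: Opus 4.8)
\emph{Proof proposal.} The plan is to work entirely in the eigenbasis $\{v_j\}_{j=1}^n$ of the 1D Laplacian $L$. The crucial point is that, since $v_j$ is an eigenvector of $L-c^2I$ with eigenvalue $t_j=\theta_j-c^2$, it is also an eigenvector of $|L-c^2I|$ with eigenvalue $|t_j|$, and hence of the smoother $\bar S_1=\bar S_2=I-\tau|L-c^2I|$, with $\bar S_1 v_j=(1-\tau|t_j|)v_j$. Because $\bar G=\bar S_2^{\nu}\bar K\bar S_1^{\nu}$ and the only factor that mixes modes is the coarse-grid correction $\bar K$, whose action on the $v_j$ is already recorded in~(\ref{eqn:Kv-poly}) (itself obtained from the known actions of the full-weighting $R$ and the piecewise-linear $P$, exactly as in the derivation of~(\ref{eqn:Kv})), the whole computation reduces to composing a diagonal operator with~(\ref{eqn:Kv-poly}) and another diagonal operator.

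Concretely, I would first apply $\bar S_1^{\nu}$ to $v_j$, producing the scalar $(1-\tau|t_j|)^{\nu}$. Then I would apply $\bar K$ via~(\ref{eqn:Kv-poly}): for $j=1,\ldots,N$ this gives a combination of $v_j$ with coefficient $1-c_j^4|t_j|/|t_j^H|$ and of the complementary mode $v_{n+1-j}$ with coefficient $s_j^2c_j^2|t_j|/|t_j^H|$; for $j=N+1$ the vector is unchanged; and for $j=N+2,\ldots,n$ one gets the analogous pairing with $|t_{n+1-j}^H|$ in place of $|t_j^H|$. Finally, applying $\bar S_2^{\nu}$ multiplies the $v_j$-component by a further $(1-\tau|t_j|)^{\nu}$ and the $v_{n+1-j}$-component by $(1-\tau|t_{n+1-j}|)^{\nu}$. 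Collecting the factors yields $(1-\tau|t_j|)^{2\nu}$ on the diagonal terms and $(1-\tau|t_j|)^{\nu}(1-\tau|t_{n+1-j}|)^{\nu}$ on the cross terms, which is precisely~(\ref{eqn:g11-poly})--(\ref{eqn:g22-poly}), while the case $j=N+1$ gives $\bar g_j=(1-\tau|t_j|)^{2\nu}$ as in~(\ref{eqn:g-poly}).

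The steps that require care are purely a matter of bookkeeping: one must note that the index $n+1-j$ ranges over $\{N+2,\ldots,n\}$ when $j\in\{1,\ldots,N\}$ and over $\{1,\ldots,N-1\}$ when $j\in\{N+2,\ldots,n\}$, so $j$ and $n+1-j$ are always distinct and $\bar G$ leaves each two-dimensional subspace $\mbox{span}\{v_j,v_{n+1-j}\}$ (and each $\mbox{span}\{v_{N+1}\}$) invariant; and one must track that in $\bar S_2^{\nu}\bar K\bar S_1^{\nu}v_j$ the factor attached to the complementary component is $(1-\tau|t_j|)^{\nu}$ from $\bar S_1^{\nu}$ acting on $v_j$ times $(1-\tau|t_{n+1-j}|)^{\nu}$ from $\bar S_2^{\nu}$ acting on $v_{n+1-j}$. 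Note that, in contrast to Theorem~\ref{thm:G}, no spectral hypothesis on the shift is needed: here the smoother, the coarse-grid correction, and $|L-c^2I|$ all share the eigenbasis exactly, so $\bar G$ is literally the error propagation operator of the formal two-grid cycle for $|L-c^2I|z=r$, and the formula holds whenever $|L_H-c^2I_H|$ is nonsingular. Consequently there is no substantive obstacle; the only nontrivial ingredient is the expression~(\ref{eqn:Kv-poly}) for $\bar K v_j$, which has already been established.
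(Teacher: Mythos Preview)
Your proposal is correct and follows precisely the approach the paper intends: the paper provides no formal proof of Theorem~\ref{thm:G-poly} but simply derives~(\ref{eqn:Kv-poly}) and states the theorem as its analogue to Theorem~\ref{thm:G}, relying on the fact that $v_j$ are common eigenvectors of the smoother and of $|L-c^2I|$ so that the computation reduces to composing the diagonal smoothing factors with~(\ref{eqn:Kv-poly}). Your bookkeeping of the cross-term smoothing factors and your remark that, unlike Theorem~\ref{thm:G}, no hypothesis $c^2<\theta_{N+1}$ is required are both accurate.
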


We conclude from
Theorem~\ref{thm:G-poly} 
%allows showing 
that in the ideal case where $p_m(L - c^2 I) = |L - c^2 I|$, 
Algorithm~\ref{alg:g2g} with $B = p_m(L - c^2 I)$ and a proper choice of $\tau$ and $\nu$ 
reduces the error of system~(\ref{eqn:av_sys})
in the directions of all eigenvectors $v_j$, possibly except for a few that 
% In the directions of a few eigenmodes, however,
%the error may be amplified. These eigenmodes 
correspond to $\theta_j$ close to the shift $c^2$. 
Unlike in the case of Algorithm~\ref{alg:g2g} with $B = L$,
as $ch$ grows, no amplified error components appear in the directions of eigenvectors associated 
with $\theta_j$ distant from $c^2$ on the left. 
This suggests that Algorithm~\ref{alg:g2g} with 
$B = p_m(L - c^2 I) = |L - c^2 I|$ provides a more accurate solve for~(\ref{eqn:av_sys}) 
with larger $ch$.

To see this, let us first assume that $\tau = h^2/(3 - c^2 h^2)$.
%Let us assume that the smoothing parameter $\tau$ is chosen to ensure
%that $\left| 1 - \tau|\lambda_j| \right|$ is small for $j>N$.
Since~(\ref{eqn:ch_coarse}) implies that $|t_j| = t_j$ for $j>N$, 
%the eigenvalues of $L - c^2 I$
%and $|L-c^2 I|$ associated with the oscillatory eigenmodes $v_j$ are the same,
%i.e., $|\lambda_j| = \lambda_j$ for $j>N$, 
%this can be done, e.g.,
%by choosing 
%%\begin{equation}\label{eqn:tau}
%$\tau = h^2/(3 - c^2 h^2)$.
%%\end{equation}
this choice is known to give the smallest uniform bound on $|1 - \tau |t_j||$
corresponding to the oscillatory eigenmodes $v_j$, which is $|1 - \tau |t_j|| < 1/(3-ch) < 1/2$
with the last inequality resulting from~(\ref{eqn:ch_coarse}).
%;~e.g.,~\cite{Elman.Ernst.OLeary:01}. 
%corresponds to the optimal damping of the oscillatory error components 
%and results in the 
Hence, coefficients~(\ref{eqn:g12-poly})--(\ref{eqn:g22-poly}) can be reduced 
within a reasonably small number $\nu$ of smoothing steps. 
%Note that if $c^2 = 0$ then $\tau = h^2/3$, as in the previous subsection. 

Next, we note that~(\ref{eqn:g11-poly}), which is not substantially affected by smoothing, 
%it is readily seen that reduction coefficient~(\ref{eqn:g11-poly})
can be large if $c^2$ is close to $\theta_j^H$, i.e., if $t_j^H \approx 0$.
At the same time, 
%using $\theta_j^H = c_j^2 \theta_j$, 
we can write~(\ref{eqn:g11-poly})~as
\[
g_j^{(11)} = (1 - \tau |t_j|)^{2\nu} \left(1 - c_j^2 \left| 1 + \frac{c^2 s_j^2}{t_j^H}\right| \right),
\]
which shows that $|g_j^{(11)}|$ 
%is large if $\lambda_j^H \approx 0$. 
%however, gets closer to
approaches $(1 - \tau |t_j|)^{2\nu} s_j^2 < 1/2$ 
as $|t_j^H|$ increases, i.e.,
smooth error components associated with $\theta_j$ away from $c^2$ are well damped. 
% by the coarse grid correction. 
%$j$ corresponding to larger values of $|\lambda_j^H|$. 
%
%We note that if $\nu$ is too small and $\lambda_j^H \approx 0$, 
%the corresponding coefficients~(\ref{eqn:g12-poly}), (\ref{eqn:g21-poly}), and~(\ref{eqn:g22-poly}) may be large, i.e., 
%the error may grow in the direction of the eigenmode complementary to $v_j$. 
%

Thus, if used as a preconditioner, Algorithm~\ref{alg:g2g} with $B = p_m(L - c^2 I) = |L - c^2 I|$ 
aims at clustering the spectrum of the preconditioned matrix around $1$ and $-1$, with a few
possible outliers that result from the amplification of the smooth eigenmodes associated with
$\theta_j$ close to $c^2$.    
Unlike in the case where $B = L$, the increase of $ch$ does not additionally amplify the smooth 
error components distant from $c^2$ on the left. Therefore,
Algorithm~\ref{alg:g2g} with $B = p_m(L - c^2 I) = |L - c^2 I|$ can be expected to provide 
a more accurate preconditioner for larger shifts.

%the number of the amplified error components does not increase as $ch$
%grows.  

%As already discussed, in the context of preconditioning, such a deficiency 
%results in a slight delay in the convergence of the optimal, and inexpensive, outer PMINRES iterations.   

%%
%This observation suggests that for larger $ch$ 
%Algorithm~\ref{alg:g2g} with $B = p_m(L - c^2 I) = |L - c^2 I|$ 
%results in a more accurate preconditioner.
%%This makes the option 
%%$B = p_m(L - c^2 I)$ preferable for larger $ch$. 
%%

Although our analysis targets the ideal but barely feasible case 
where $p_m(L - c^2 I) = |L - c^2 I|$, it motivates 
the use of \textit{polynomial approximations} $p_m(L - c^2 I) \approx |L - c^2 I|$
and provides a theoretical insight into the superior behavior of such an option
for larger $ch$. 
In the rest of this subsection we describe a method for constructing such polynomial
approximations. 
% to the absolute value operators. 
Our approach is based on the 
finding that the problem is easily reduced to constructing
\textit{polynomial filters}. 
%, which is reasonably well studied.       

We start by introducing the step function
\[
h_{\alpha}(\lambda) = \left\{ 
\begin{array}{ll}
\displaystyle  1, & \lambda \geq \alpha, \\
\displaystyle  0, & \lambda  < \alpha; 
\end{array}
\right.
\]
where $\alpha$ is a real number, 
and noting that $\mbox{sign}(\lambda) = 2h_0(\lambda) - 1$, so that
%
%By~(\ref{eqn:absval}) with $A \equiv L - c^2 I$, 
%this allows writing the expression for the absolute value operator
%in the form
%
\begin{equation}\label{eqn:avp_step}
|L - c^2 I|  =   \left( 2h_0(L - c^2 I) - I \right)\left(L - c^2 I \right). 
\end{equation}
Here $h_0 (L - c^2 I) = V h_0(\Lambda) V^*$, where 
$V$ is the matrix of eigenvectors of $L - c^2 I$ and 
$h_0(\Lambda) = \mbox{diag}\{0,\ldots,0,1,\ldots,1\}$ 
is obtained by applying the step function $h_0(\lambda)$ to the diagonal entries of the matrix $\Lambda$ of 
the associated eigenvalues. Clearly the number of zeros on the diagonal of $h_0 (\Lambda)$
equals the number of negative eigenvalues of $L - c^2 I$.

Let $q_{m-1}(\lambda)$ be a polynomial of degree at most $(m-1)$, such that
$q_{m-1}(\lambda)$ approximates $h_0(\lambda)$ on the interval $[a,b]$, where
$a$ and $b$ are the lower and upper bounds on the spectrum of $L - c^2 I$, 
respectively.
In order to construct an approximation $p_m(L - c^2 I)$ of $|L - c^2 I|$, we replace the step function 
$h_0(L - c^2 I)$ in~(\ref{eqn:avp_step}) by the polynomial  $q_{m-1}(L - c^2 I)$. Thus,
%so that   
%at most $d$, such that $\phi_d(\lambda)$ approximates $h_0(\lambda)$. This results in the 
%polynomial approximation 
%
\begin{equation}\label{eqn:avp_poly}
|L  - c^2 I| \approx p_m(L - c^2 I) = \left( 2 q_{m-1} (L - c^2 I) - I \right)\left(L - c^2 I\right). 
\end{equation}

The matrix $L - c^2 I$ is readily available on the fine grid. Therefore,
we have reduced the problem of evaluating the polynomial approximation
$p_m$ of the absolute value operator to constructing a polynomial
$q_{m-1}$ that approximates the step function~$h_0$.
More specifically, since Algorithm~\ref{alg:g2g} can be implemented without the explicit
knowledge of the matrix $B$, i.e., $B$ can be accessed only through its action on a vector,
we need to construct approximations of the form $q_{m-1}(L - c^2 I) v$ 
to $h_{0}(L - c^2 I) v$, where $v$ is a given vector.

The task of constructing $q_{m-1}(L - c^2 I) v \approx h_0(L - c^2 I) v$
represents an instance of \textit{polynomial filtering},
which is well known;  
see, e.g.,~\cite{Erhel.Guyomarc.Saad:01, Saad:06, Zhou.Saad.Tiago.Chelikowski:06}.
%This procedure represents an 
%instance of the \textit{polynomial filtering}
%which is reasonably well known and studied in literature;  
%see, e.g.,~\cite{Erhel.Guyomarc.Saad:01, Saad:06, Zhou.Saad.Tiago.Chelikowski:06}.
In this context, due to the property of filtering out certain undesirable 
eigencomponents, the step function $h_{0}$ is called 
a \textit{filter function}. 
The approximating polynomial $q_{m-1}$ is referred to as
a \textit{polynomial filter}.

State-of-the-art polynomial filtering techniques such as~\cite{Saad:06} 
would first replace the discontinuous step function $h_{0}(\lambda)$ 
by a smooth approximation on $[a,b]$ and then approximate the latter by a polynomial 
in the least-squares sense. 
In this paper, we follow a simpler approach based on the 
direct approximation of $h_0(\lambda)$ using  
\textit{Chebyshev polynomials}~\cite{Powell:81, Rivlin:81}.
The constructed polynomial $q_{m-1}$ allows defining 
$q_{m-1}(L - c^2 I) v \approx h_0(L - c^2 I) v$ and hence 
$p_{m}(L - c^2 I) v \approx |L - c^2 I| v$.
\iftoggle{report}{
Below
we describe the entire procedure.  

Let 
\begin{equation}\label{eqn:ksi}
\xi = \left(\frac{2}{b-a}\right)\lambda - \frac{b+a}{b-a}
\end{equation}
be a linear change of variable which maps $\lambda \in [a,b]$ to $\xi \in [-1,1]$.  
Then note that
\[
h_0 (\lambda) = h_\alpha (\xi), \ \alpha = -\frac{b+a}{b-a}.
\]

Let $T_i(\xi) = \mbox{cos}( i \, \mbox{arccos} (\xi) )$ be 
the Chebyshev polynomials of the first kind, 
%where 
$\xi \in [-1,1]$. 
%and $i = 0,\ldots,d$. 
%
Then the Chebyshev least-squares approximation $\bar q_{m-1}(\xi)$ of the 
step function $h_\alpha(\xi)$ is of the form
\begin{equation}\label{eqn:exp_cheb}
\bar q_{m-1}(\xi) = \sum_{i=0}^d \gamma_i T_i (\xi), \ \xi \in [-1,1],
\end{equation}
where the expansion coefficients are given by
\[
\gamma_i = \displaystyle  \frac{1}{n_i} \int_{-1}^{1} \frac{1}{\sqrt{1-\xi^2}} h_{\alpha} (\xi) T_i (\xi) d\xi, \ i = 0,\ldots,m-1;
%\end{array}
%\right.
\]
with $n_i = \displaystyle \int_{-1}^{1} \frac{1}{\sqrt{1 - \xi^2}} T^2_i(\xi) d \xi$.
%and $\alpha \in [-1,1]$.
%
Calculating the above integrals gives the exact expressions for $\gamma_i$,
\begin{equation}\label{eqn:coeff_cheb}
\gamma_i = \left\{ 
\begin{array}{ll}
\displaystyle  \frac{1}{\pi} \left( \mbox{arccos}(\alpha) \right), & i = 0, \\
 & \\ 
\displaystyle  \frac{2}{\pi} \left( \frac{\mbox{sin}(i \, \mbox{arccos}(\alpha))}{i} \right), & i \geq 1.
\end{array}
\right.
\end{equation}

Since $h_0(\lambda) = h_\alpha(\xi)$, we define the polynomial approximation
$q_{m-1} (\lambda)$ to $h_0 (\lambda)$ as 
%we define the desired polynomial approximation of $h_0 (\lambda)$ on $[a,b]$ as 
%\[
$
q_{m-1} (\lambda) \equiv \bar q_{m-1}(\xi), 
$
%\]
where $\xi$ is given by~(\ref{eqn:ksi}),
$\alpha = -(b+a)/(b-a)$, and $\bar q_{m-1}(\xi)$
is constructed according to~(\ref{eqn:exp_cheb}),~(\ref{eqn:coeff_cheb}).
Thus, by~(\ref{eqn:avp_poly}), we get
\[
|L  - c^2 I|v \approx p_m(L - c^2 I)v = 2 \sum_{i=0}^{m-1} \gamma_i T_i(C) t - t, \ t = (L - c^2 I) v, 
\]
where
$\displaystyle C = \frac{2}{b-a} \left(L - c^2 I\right) - \frac{b+a}{b-a}I$.
Since the Chebyshev polynomials are generated using the three-term recurrent relation~\cite{Powell:81, Rivlin:81}
\[
T_i (\xi) = 2 \xi T_{i-1} (\xi) - T_{i-2} (\xi), \ T_1 (\xi) = \xi, \ T_0(\xi) = 1, \ i = 2,3,\ldots,
\]
the procedure for constructing the desired vectors $w = p_m(L - c^2 I) v \approx |L - c^2 I| v$
can be summarized by the following algorithm.

\vspace{0.1in}
\begin{algorithm}[Polynomial approximation of the absolute value]\label{alg:av}
Input: $v$, $m$, $[a, b] \supset \Lambda(L-c^2 I)$. Output: $w = p_m(L-c^2I) v \approx |L-c^2 I|v$.
\begin{enumerate}
\item \emph{Set} $v \leftarrow (L - c^2 I)v$. 
\item \emph{Set} $\displaystyle C \leftarrow \frac{2}{b-a} \left(L - c^2 I\right) - \frac{b+a}{b-a} I$. \emph{Set} $\alpha \leftarrow - (b+a)/(b-a)$.
\item \emph{Set} $v_0 \leftarrow v$, $v_1 \leftarrow Cv$, $w_1 \leftarrow \gamma_0 v_0 + \gamma_1 v_1$.  
\emph{Throughout, compute $\gamma_i$ by~(\ref{eqn:coeff_cheb})}.
\item  \emph{\textbf{For}} $i = 2, \ldots, m-1$ \emph{\textbf{do}} 
  \item \hspace{0.2in} 
         $v_i \leftarrow 2Cv_{i-1} - v_{i-2}$
  \item  \hspace{0.2in} 
        $w_i \leftarrow w_{i-1} + \gamma_i v_i$
\item \emph{\textbf{EndFor}}
\item \emph{Return} $w \leftarrow 2 w_{m-1} - v$.
\end{enumerate}
\end{algorithm}
\vspace{0.1in}

Algorithm~\ref{alg:av} provides means to replace a matrix-vector product 
with the unavailable $|L - c^2 I|$ by a procedure that involves a few 
multiplications with the shifted Laplacian $L - c^2 I$. As we further show,
the degree $m$ of the approximating polynomial can be kept reasonably low.
Moreover, in the MG framework discussed in the 
next subsection, the algorithm has to be invoked only on sufficiently coarse grids. 
} % end \iftoggle
{
% This is the submission (shorter) version
Thus, the entire procedure provides means to replace a matrix-vector product 
with the unavailable $|L - c^2 I|$ by, essentially, a few 
multiplications with $L - c^2 I$. As we further show,
the degree $m$ of the approximating polynomial can be kept reasonably low.
Moreover, in the MG framework discussed in the 
next subsection, the algorithm has to be invoked only on sufficiently coarse grids. 
}

\subsection{The MG AV preconditioner}\label{subsec:mg-avp}

Now let us consider a hierarchy of $s + 1$ grids numbered by $l = s, s-1, \ldots, 0$ with the corresponding mesh sizes
$\left\{h_l\right\}$ in decreasing order ($h_s = h$ corresponds to the finest grid, and $h_0$ to the coarsest).
For each level $l$ we define the discretization $L_l - c^2 I_l$ of the differential operator in 
(\ref{eqn:helmholtz_bvp}), where $L_l$ is the Laplacian on grid $l$, and 
$I_l$ is the identity of the same size.

In order to extend the two-grid AV preconditioner given by Algorithm~\ref{alg:g2g} to the  
\textit{multigrid}, 
instead of inverting the absolute value $\left|L_H - c^2 I_H \right|$ 
in (\ref{eqn:cgc-1}), we recursively apply 
the algorithm to the restricted vector $R(r - B w^{pre})$. 
This pattern is then followed in the V-cycle fashion
%~\cite{Briggs.Henson.McCormick:00, Trottenberg.Oosterlee.Schuller:01}
on all levels, with the inversion of the absolute value of the shifted Laplacian on the coarsest grid.
The matrix $B$ on level $l$ is denoted by $B_l$. 
Each $B_l$ is assumed to be SPD and is expected to approximate $|L_l - c^2 I_l|$. 
%The described approach can be viewed as replacing $w_H$ in (\ref{eqn:cgc-1}) by its approximation, i.e., 
%constructing $w_H  \approx   \left|L_H - c^2 I_H \right|^{-1} R \left(r - L w^{pre}\right)$. 

In the previous subsections we have considered two choices of $B$ for the two-grid preconditioner
in Algorithm~\ref{alg:g2g}. 
In the MG framework, these choices give $B_l = L_l$ and $B_l = p_{m_l}(L_l - c^2 I_l)$,
where $p_{m_l}$ is a polynomial of degree at most $m_l$ on level $l$. 
%It is assumed that all $p_{m_l}(L_l - c^2 I_l)$ are SPD and expected that $B_l \approx |L_l - c^2 I_l|$.

The advantage of the first option, $B_l = L_l$, is that it can be easily constructed
and the application of $B_l$ to a vector is inexpensive even if the size of the operator
is very large.   
According to our analysis for the 1D model problem in subsection~\ref{subsec:BLaplacian},
the approach is suitable for $ch_l$ sufficiently small. Typically this is a case
for $l$ corresponding to finer grids. However, $c h_l$ increases with every new level. 
This may result in the deterioration of accuracy of the overall MG preconditioning scheme, 
unless the size of the coarsest level is kept sufficiently large.

The situation is different for the second option $B_l = p_{m_l} (L_l - c^2 I_l)$.
In this case, applications of $B_l$ may be expensive on finer grids because they
require a sequence of matrix-vector multiplications with large shifted
Laplacian operators. However, on coarser levels, i.e., for larger $ch_l$, this
is not restrictive because the involved operators are
significantly decreased in size compared to the finest level.     
Additionally, as suggested by the analysis in subsection~\ref{subsec:Bpoly},
if $p_{m_l} (L_l - c^2 I_l)$ represent reasonable approximations of 
$|L_l - c^2 I_l|$ on levels $l$, one can expect a higher
accuracy of the whole preconditioning scheme compared
to the choice $B_l = L_l$.   

Our idea is to combine the two options.
Let $\delta \in (0,1)$ be a ``switching'' parameter, where for finer
grids $ch_l < \delta$. We choose %$B_l = L_l$. Otherwise,
%we set $B_l = p_{m_l} (L_l - c^2 I_l)$. Thus, 
\begin{equation}\label{eqn:Bl}
B_{l} = \left\{ 
\begin{array}{ll}
\displaystyle  L_l, & ch_l < \delta, \\
\displaystyle  p_{m_l}(L_l - c^2 I_l), & ch_l \geq \delta. 
\end{array}
\right.
\end{equation}
\iftoggle{report}{
The polynomials $p_{m_l}(L_l - c^2 I_l)$ are accessed
through their action on a vector and are constructed using Algorithm~\ref{alg:av}
with $L - c^2 I \equiv L_l - c^2 I_l$ and $m \equiv m_l$. 
}
{
% Short version
The polynomials $p_{m_l}(L_l - c^2 I_l)$ are accessed
through their action on a vector. 
}

Summarizing our discussion, if started from the finest grid $l = s$, 
the following scheme gives the multilevel extension of the two-grid AV preconditioner defined
by Algorithm~\ref{alg:g2g}.  
The subscript $l$ is introduced to match quantities 
to the corresponding grid. We assume that the parameters 
$\delta$, $m_l$, $\nu_l$, and the smoothers $M_l$ are pre-specified.

%Finally, we state the new MG absolute value preconditioning
%procedure in Algorithm~\ref{alg:avp-gmg-poly}. 
%The scheme is a modification of Algorithm~\ref{alg:avp-gmg} 
%which, as we will see in the next section, 
%allows handling larger shifts in system~(\ref{eqn:helmholtz_fd}) by taking advantage
%of the polynomial approximations of the absolute value operators on coarser grids. 
\vspace{0.1in}
\begin{algorithm}[AV-MG($r_l$): the MG AV preconditioner]\label{alg:avp-gmg}
\vspace{.1in}
Input $r_l$. Output~$w_l$.
%
%Output: w
\begin{enumerate}
\item Set $B_l$ by~(\ref{eqn:Bl}). % Use Algorithm~\ref{alg:av} for matrix-vector products with $B_l = p_{m_l} (L_l - c^2 I_l)$. 
	\item \emph{Presmoothing}. Apply $\nu_l$ smoothing steps, $\nu_l \geq 1$:
\begin{equation}\label{eqn:mg-pre}
w_l^{(i+1)} = w_l^{(i)} + M_l^{-1} (r_l - B_l w_l^{(i)}),  \ i = 0,\ldots,\nu_l - 1, \ w_l^{(0)} = 0,
\end{equation}
where $M_l$ defines a smoother on level~$l$. 
Set $w_l^{pre} = w_l^{(\nu)}$. 
	\item \emph{Coarse grid correction}. Restrict ($R_{l-1}$) $r_l - B_l w_l^{pre}$ to the grid $l-1$,
        recursively apply AV-MG, and prolongate ($P_l$) back to the fine grid.
         This delivers the coarse grid correction added to $w_l^{pre}$:
\begin{equation}\label{eqn:mg-cgc-1}
w_{l-1} = \left\{ 
\begin{array}{ll}
\displaystyle  \left|L_0 - c^2 I_0 \right|^{-1} R_0 \left(r_1 - B_1 w_1^{pre}\right), & l = 1, \\
\displaystyle  \mbox{AV-MG}\left(R_{l-1} \left(r_l - B_l w_l^{pre}\right) \right), & l > 1; 
\end{array}
\right.
\end{equation}
%	If $l = 1$, then %multiply the restricted vector by the inverted coarse-level absolute value $\left|L_0 - c^2 I_0\right|$,
%	\begin{equation}\label{eqn:mg-cgc-1}
%	w_0  =   \left|L_0 - c^2 I_0 \right|^{-1} R_0 \left(r_1 - B_1 w_1^{pre}\right), \ \mbox{if} \ l=1.
%	\end{equation}
%	Otherwise, recursively apply AV-MG: 
%	\begin{equation}\label{eqn:mg-cgc-2}
%	w_{l-1}  =   \mbox{AV-MG}\left(R_{l-1} \left(r_l - B_l w_l^{pre}\right) \right), \ \mbox{if} \ l>1.
%	\end{equation}
%	Prolongate the result back to the fine grid and
%	This delivers the coarse grid correction, which is 
%add to $w_l^{pre}$ to obtain the corrected vector $w_l^{cgc}$:
\begin{equation}\label{eqn:mg-cgc-2}
  w_l^{cgc}  =  w_l^{pre} + P_l w_{l-1}.
\end{equation}
	\item \emph{Postsmoothing}. Apply $\nu_l$ smoothing steps:
\begin{equation}\label{eqn:mg-post}
w_l^{(i+1)} = w_l^{(i)} + M_l^{-*} (r_l - B_l w_l^{(i)}),  \ i = 0,\ldots,\nu_l - 1, \ w_l^{(0)} = w_l^{cgc},
\end{equation}
where $M_l$ and $\nu_l$ are the same as in step 2. 
Return $w_l = w_l^{post} =  w_l^{(\nu_l)}$.
%Set $w_l^{post} = w_l^{(\nu)}$.   
 \end{enumerate}
\end{algorithm}

The described MG AV preconditioner implicitly
constructs a mapping denoted by $r \mapsto w = T_{mg} r$, where the operator $T = T_{mg}$ has 
the following structure:
\begin{equation}\label{eqn:mg_struct}
T_{mg}  =  \left(I - M^{-*} B \right)^{\nu} P T_{mg}^{(s-1)} R \left(I -  B M^{-1} \right)^{\nu} + F,
\end{equation}
with $F$ as in (\ref{eqn:2grid_struct}) and $T_{mg}^{(s-1)}$ defined according to the recursion
%In other words, $T^{(m-1)}_{mg}$ is defined as follows:
\begin{equation}\label{eqn:vcyc}
\begin{array}{lll}
%\begin{equation}
T^{(l)}_{mg} & = & \left(I_{l} - M^{-*}_l B_l \right)^{\nu_l} P_{l}  T^{(l-1)}_{mg} R_{l-1} \left(I_l -  B_l M_l^{-1} \right)^{\nu_l} + F_l, \\
%%l & = & 1,\ldots,m-1, \\
T^{(0)}_{mg} & = & \left| L_0 - c^2 I_0 \right|^{-1}, \ l  =  1,\ldots,s-1, 
\end{array}
\end{equation} 
where $F_l = B_l^{-1} - \left(I_l - M_l^{-*} B_l \right)^{\nu_l} B_l^{-1}\left(I_l -  B_l M_l^{-1}\right)^{\nu_l}$.

%In (\ref{eqn:mg_struct}), we skip 
%the subscript in the notation for the quantities associated with the finest level $l=s$.
The structure of the multilevel preconditioner $T = T_{mg}$ in (\ref{eqn:mg_struct}) is the same as that of the 
two-grid preconditioner $T = T_{tg}$ in (\ref{eqn:2grid_struct}), with $\left| L_H - c^2 I_H \right|^{-1}$
replaced by the recursively defined operator $T^{(m-1)}_{mg}$ in (\ref{eqn:vcyc}).
Thus, the symmetry and positive definiteness of $T = T_{mg}$ 
follows from the same property of the two-grid operator through relations~(\ref{eqn:vcyc}),
provided that $P_l = \alpha_l R_{l-1}^*$ and the spectral radii of $I_l - M_l^{-1} B_l$
and $I_l - M_l^{-*} B_l$ are less than $1$ throughout the coarser levels. 
%If the assumptions on the fine-grid operators $M$, $M^*$, $R$ and $P$, are sufficient to ensure that the two-grid 
%preconditioner in (\ref{eqn:2grid_struct}) is SPD, it 
%remain valid throughout the coarser levels, i.e., $P_l = \alpha R_{l-1}^*$, and the spectral radii of $I_l - M_l^{-1} B_l$
%and $I_l - M_l^{-*} B_l$ are less than $1$, $l = 1,\ldots,s-1$,  
%then the symmetry and positive definiteness of the MG preconditioner $T = T_{mg}$ in~(\ref{eqn:mg_struct})
%can be obtained from the same property of the two-grid operator  
%through relations~(\ref{eqn:vcyc}).
%
We remark that preconditioner~(\ref{eqn:mg_struct})--(\ref{eqn:vcyc}) is non-variable, i.e., it 
preserves the global optimality of PMINRES.  
%All occurring polynomials $B_l = p_{m_l}(L_l - c^2 I_l)$ are assumed to be SPD. 

The simplest possible approach for computing $w_0$ in~(\ref{eqn:mg-cgc-1})
is to explicitly construct $|L_0 - c^2 I_0|^{-1}$ through the full 
eigendecomposition of the coarse-level Laplacian, and then apply it to $R_0(r_1 - B_1 w_1^{pre})$. 
An alternative approach
%the matrix  $V_0$ of eigenvectors associated with the negative eigenvalues of $L_0 - c^2 I_0$,
%contained in the corresponding diagonal matrix $\Lambda_0$,%~(\ref{eqn:av_lowrank}),
%and determines 
is to determine $w_0$ as a solution of the linear system
%\begin{equation}\label{eqn:cgc-system}
$(L_0 - c^2 I_0 + 2 V_0 |\Lambda_0| V^*_0) w_0 = R_0(r_1 - B_1 w_1^{pre})$, 
%\end{equation}
where $V_0$ is the matrix of eigenvectors associated with the negative eigenvalues of $L_0 - c^2 I_0$
contained in the corresponding diagonal matrix $\Lambda_0$. %~(\ref{eqn:av_lowrank}),
In the latter case, the full eigendecomposition of $L_0$ is replaced 
by the \textit{partial} eigendecomposition targeting negative eigenpairs,
followed by a linear solve.   
\iftoggle{report}{
To solve the linear system \textit{approximately},
additional gains may be obtained by applying the Woodbury formula~\cite[p.50]{Golub.VanLoan:96}
to the matrix $(L_0 - c^2 I_0 + 2 V_0 |\Lambda_0| V^*_0)^{-1}$,
provided that systems $C z = r$ can be efficiently solved 
for some $C \approx L_0 - c^2 I_0$. 
In this work, we rely only on exact coarse grid solves.
}
%
%The sizes of the coarsest-level problems are discussed in the next section. 

Since we use Richardson's iteration with respect to
$p_{m_l} (L_l - c^2 I_l)$ as a smoother on coarser grids, as 
motivated by the discussion in subsection~\ref{subsec:Bpoly}, 
the guidance for the choice of the coarsest grid is given by 
condition~(\ref{eqn:ch_coarse}).
More specifically, in the context of the standard coarsening procedure ($h_{l-1} = 2 h_l$),
we select hierarchies of grids satisfying $ch_l < 1$ for $l = s, \ldots, 1$, and $ch_0>1$.
As shown in the next section, even for reasonably large $c^2$,
the coarsest-level problems are small. %; see Table~\ref{tbl:size_n0}. 

The parameter $\delta$ in~(\ref{eqn:Bl}) should be chosen to ensure the balance between computational
costs and the quality of the MG preconditioner. In particular, if $\delta$
is reasonably large then the choices of $B_l$ are dominated by the option
$B_l = L_l$, which is inexpensive but may not be suitable for larger shifts
on coarser levels. 
%in terms of the preconditioning accuracy. 
%
On the other extreme, if $\delta$ is close to zero then the common 
choice corresponds to $B_l = p_{m_l}(L_l - c^2 I_l)$, which provides a 
better preconditioning accuracy for larger shifts but may be too
computationally intense on finer levels.  
In our numerical experiments, % of the next section, 
we keep $\delta \in [1/3 ,3/4]$.

As we demonstrate in the next section, the degrees $m_l$ of the occurring polynomials $p_{m_l}$
should not be large, i.e., only a few matrix-vector multiplications with $L_l - c^2 I_l$ 
are required to obtain satisfactory approximations of absolute value operators. 
For properly chosen $\delta$, these additional 
multiplications need to be performed on grids that are 
significantly coarser than the finest grid, i.e.,
the involved matrices $L_l - c^2 I_l$ are orders of magnitude
smaller than the original fine grid operator.
%; 
%see Table~\ref{tbl:size_n0}.
% 
As confirmed by our numerical experiments, the overhead caused by the polynomial approximations % in~Algorithm~\ref{alg:avp-gmg-poly}
appears to be marginal and does not affect much the computational cost of
the overall preconditioning scheme.

\section{Numerical experiments}\label{sec:numeric}

This section presents a numerical study of the MG preconditioner
in Algorithm~\ref{alg:avp-gmg}. 
Our goal here is twofold.
On the one hand, the reported numerical experiments serve 
as a proof of concept of the AV preconditioning
described in Section~\ref{sec:absval_prec}.  
In particular, we show %on the example of the model problem
that the AV preconditioners can be constructed at
essentially the same cost as the standard preconditioning methods
(MG in our case). 
On the other hand, we demonstrate that the MG AV 
preconditioner in Algorithm~\ref{alg:avp-gmg} combined with
the optimal PMINRES iteration, in fact, 
leads to an efficient and economical computational scheme,
further called MINRES-AV-MG, 
which outperforms several known competitive approaches 
for the model problem.

Let us briefly describe the alternative preconditioners 
used for our comparisons. Throughout, we use {\sc matlab} for our numerical examples. 

\paragraph{\emph{\textbf{The inverted Laplacian preconditioner}}} 
This strategy, introduced in~\cite{Bayliss.Goldstein.Turkel:83}, 
is a representative of an SPD preconditioning for model problem~(\ref{eqn:helmholtz_fd}),
where
%and hence PMINRES is used as a method of choice.
the preconditioner is applied through solving systems $L w = r$, i.e., $T = L^{-1}$.
As has been previously discussed, for relatively small shifts $c^2$, the Laplacian $L$
constitutes a good SPD approximation of $|L - c^2 I|$. In this sense,
the choice $T = L^{-1}$ perfectly fits, as a special case,
into the general concept of the AV preconditioning presented
in Section~\ref{sec:absval_prec}.
We refer to PMINRES with $T = L^{-1}$ as MINRES-Laplace.

%The approach based on the inverted Laplacian was further modified 
%in~\cite{Laird:Giles:02} by introducing a shift into
%the preconditioner. It was suggested  
%to apply preconditioning by solving systems $(L + c^2 I) w = r$, i.e.,
%use $T = (L+c^2 I)^{-1}$. Clearly, in this case $T$ remains SPD, and
%PMINRES can be chosen as an appropriate iterative method. We call 
%the resulting scheme MINRES-Laplace-Shift.  

Usually, one wants to solve the system $Lw = r$ 
only approximately, i.e., use $T \approx L^{-1}$. 
This can be efficiently done, e.g., by applying the 
V-cycle of a standard MG method~\cite{Briggs.Henson.McCormick:00, Trottenberg.Oosterlee.Schuller:01}.
In our tests, however, we perform the exact solves using the {\sc matlab}'s ``backslash'', so that 
the reported results reflect the best possible convergence with the inverted Laplacian
type preconditioning.

%Throughout, we use {\sc matlab} for our numerical examples. The solution of systems $Lw = r$ for
%MINRES-Laplace is performed using the ``backslash'' operator. 
%We utilize the standard codes for MINRES and GMRES that are available in the {\sc matlab}'s collection 
%of iterative solvers. 
%
%Note that such a V-cycle can be obtained from
%Algorithm~\ref{alg:avp-gmg} if $B_l = L_l$ on all levels and coarse-grid 
%step~(\ref{eqn:mg-cgc-1}) is replaced by the linear solve with $L_0$. 
%%\begin{equation}\label{eqn:mg-cgc-1-laplace}
%%$w_0  =   \left( L_0 \right)^{-1} R_0 \left(r_1 - L_1 w_1^{pre}\right)$.
%%\end{equation}
%We refer to PMINRES preconditioned with this V-cycle as MINRES-Laplace-MG.

\paragraph{\emph{\textbf{The indefinite MG preconditioner}}} % \label{subsubsec:laplace_prec}
We consider a standard V-cycle for problem~(\ref{eqn:helmholtz_fd}). Formally, it can be 
obtained from Algorithm~\ref{alg:avp-gmg} by setting $B_l = L_l - c^2 I_l$ on all levels and
replacing the first equality in~(\ref{eqn:mg-cgc-1}) by the linear solve with $L_0 - c^2 I_0$. 
The resulting MG scheme is used as a preconditioner for restarted GMRES and for Bi-CGSTAB.
We refer to these methods as GMRES($k$)-MG and Bi-CGSTAB-MG, respectively; $k$ denotes the restart parameter.  
A thorough discussion of the indefinite MG preconditioning 
%combined with GMRES iterations 
for Helmholtz problems can be found, e.g., in~\cite{Elman.Ernst.OLeary:01}.

%%
%\begin{table}[ht]
%\begin{center}
%\renewcommand{\arraystretch}{1.2}
%\begin{tabular}{|c|c|c|c|c|c|c|}  \hline 
%               & $c^2 = 300$ & $c^2 = 400$  & $c^2 = 1000$ & $c^2 = 1200$ & $c^2 = 1500$ & $c^2 = 3000$\\ \hline \hline
%$\delta = 1/3$ & $961$       & $961$        & $3969$       & $3969$       & $3969$       &  $16129$       \\ \hline
%$\delta = 1/2$ & $961$       & $961$        & $961$        & $3969$       & $3969$       &  $3969$       \\ \hline
%%$\delta = 3/4$ & $225$       & $225$        & $961$        & $961$        & $961$        &  $3969$       \\ \hline
%$\delta = 1$   & $225$       & $225$        & $225$        & $961$        & $961$        &  $961$       \\ \hline
%\end{tabular}
%\vspace{.1cm}
%\end{center}
%\caption{The largest problem sizes satisfying $ch_l \geq \delta$
%for different values of the shift $c^2$, ``switching''
%parameters $\delta$, and the standard coarsening scheme $h_{l-1} = 2 h_l$.
%The last row ($\delta = 1$) corresponds to the sizes of the coarsest
%problems for different $c^2$.}
%%Standard coarsening is applied to model problem~(\ref{eqn:helmholtz_bvp}) 
%%discretized on the fine grid with $h = 2^{-j}$, where $j$ is a given number.}  
%\label{tbl:size}
%\end{table}

%
\begin{table}[ht]
\caption{The largest problem sizes satisfying $ch_l \geq \delta$
for different values of the shift $c^2$, ``switching''
parameters $\delta$, and the standard coarsening scheme $h_{l-1} = 2 h_l$.
The last row ($\delta = 1$) corresponds to the sizes of the coarsest
problems for different $c^2$.}
\label{tbl:size}
\begin{center}
\renewcommand{\arraystretch}{1.2}
\begin{tabular}{|c|c|c|c|c|c|}  \hline 
               & $c^2 = 300$ & $c^2 = 400$  &  $c^2 = 1500$ & $c^2 = 3000$ & $c^2=4000$\\ \hline \hline
$\delta = 1/3$ & $961$       & $961$        & $3969$       &  $16129$      & $16129$   \\ \hline
$\delta = 1/2$ & $961$       & $961$        & $3969$       &  $3969$       & $3969$     \\ \hline
$\delta = 3/4$ & $225$       & $225$        & $961$        &  $3969$       & $3969$    \\ \hline
$\delta = 1$   & $225$       & $225$        & $961$        &  $961$        & $961$    \\ \hline
\end{tabular}
\vspace{.1cm}
\end{center}
\end{table}

%Unless otherwise explicitly stated, we consider 2D model problem~(\ref{eqn:helmholtz_fd}) 
In our tests, we consider 2D model problem~(\ref{eqn:helmholtz_fd}) 
corresponding to~(\ref{eqn:helmholtz_bvp}) discretized on the grid of size $h = 2^{-8}$ 
(the fine problem size $n = 65025$).
The exact solution $x^*$ and the initial guess $x_0$ are randomly chosen.
The right-hand side $b = (L - c^2 I) x^*$, which allows evaluating the actual errors
along the steps of an iterative method.  
%
%The right-hand side $b$ and the initial guess $x_0$ are randomly chosen.
%
All the occurring MG preconditioners are built upon the standard coarsening scheme (i.e., $h_{l-1} = 2h_l$), 
restriction is based on the full weighting, and 
prolongation on piecewise multilinear 
interpolation~\cite{Briggs.Henson.McCormick:00, Trottenberg.Oosterlee.Schuller:01}.
%Throughout, we use {\sc matlab} for our numerical examples. 
%The solution of systems $Lw = r$ for
%MINRES-Laplace is performed using the ``backslash'' operator. 
%and utilize the standard codes for MINRES and GMRES that are available in the {\sc matlab}'s collection 
%of iterative solvers. 

Let us recall that Algorithm~\ref{alg:avp-gmg} requires setting a parameter $\delta$
to switch between $B_l  = L$ and $p_{m_l}(L - c^2 I)$ on different levels; see~(\ref{eqn:Bl}).
Assuming standard coarsening, Table~\ref{tbl:size} presents the largest problem sizes
corresponding to the condition $ch_l \geq \delta$ for a few values of $\delta$ and $c^2$. 
In other words, given $\delta$ and $c^2$, each cell of Table~\ref{tbl:size} contains the
largest problem size for which the polynomial approximation of $|L_l - c^2 I_l|$ is constructed. 
Unless otherwise explicitly stated, we set $\delta = 1/3$. 
Note that according to the discussion in subsection~\ref{subsec:mg-avp} (condition~(\ref{eqn:ch_coarse})), 
the row of Table~\ref{tbl:size} corresponding to $\delta = 1$ delivers the
sizes $n_0$ of the coarsest problems for different shift values. 

Table~\ref{tbl:size} shows that the coarsest problems remain relatively small even
for large shifts. The polynomial approximations are constructed
for coarser problems of significantly reduced dimensions, which in practical applications
are negligibly small compared to the original problem size. % on the finest grid.  

As a smoother on all levels of Algorithm~\ref{alg:avp-gmg} 
we use Richardson's iteration, i.e., $M_l^{-1} \equiv \tau_l I_l$.
On the finer levels, where $B_l = L_l$, we 
%configure Algorithm~\ref{alg:avp-gmg} to 
%apply one (pre- and post-) smoothing step 
%of 
%$\omega$-damped Jacobi 
%Richardson's iteration, i.e., $M_l^{-1} \equiv \tau_l I_l$
%and $\nu_l = 1$, as a (pre- and post-) smoother 
%with 
choose $\tau_l = h^2_l/5$ and $\nu_l = 1$. 
%damping parameter $\omega = 4/5$~\cite{Briggs.Henson.McCormick:00, Trottenberg.Oosterlee.Schuller:01}.
%
%The same smoothing scheme is used on all levels of the inverted Laplacian type 
%preconditioner in MINRES-Laplace-MG. 
%
%
On the coarser levels, where $B_l = p_{m_l}(L_l - c^2 I_l)$, 
%Algorithm~\ref{alg:avp-gmg} performs
we set 
%perform smoothing  
%based on Richardson's iteration, i.e., $M_l^{-1} \equiv \tau_l I_l$.
%We choose 
$\tau_l = h_l^2/(5 - c^2 h_l^2)$ and $\nu_l = 5$. 

Similar to the 1D case considered in subsections~\ref{subsec:BLaplacian} and~\ref{subsec:Bpoly},
%Note that 
both choices of $\tau_l$ correspond to the
optimal smoothing of oscillatory eigenmodes with respect to the 2D operators 
$L_l$ and $|L_l -c^2 I_l|$, respectively~\cite{Briggs.Henson.McCormick:00, Trottenberg.Oosterlee.Schuller:01}.
%so that for the ideal case 
%$p_{m_l} (L_l - c^ 2 I_l) = |L_l - c^2 I_l|$ the 
%optimal smoothing of oscillatory eigenmodes
%is attained; 
%similar to the 1D case considered in subsection~\ref{subsec:Bpoly}.
%
Since $p_{m_l} (L_l -c^2 I_l)$ only approximates $|L_l - c^2 I_l|$ in practice, the choice of $\tau_l$
on the coarser grids is likely to be not optimal. Therefore, for $ch_l \geq \delta$, we have increased the number of smoothing
steps to $5$. 
%on coarser levels, i.e., $\nu_l = 5$ for $ch_l \geq \delta$.  
In all tests, the degrees $m_l$ of polynomials are set to $10$.
The intervals containing $\Lambda(L_l - c^2 I_l)$, required for evaluating
%the Chebyshev polynomials, 
$p_{m_l}$, are $[-c^2,8h_l^{-2} - c^2]$. 
The inverted coarse grid absolute value $|L_0 - c^2 I_0|^{-1}$ is constructed by the full eigendecomposition.

\begin{figure}[ht]
\begin{center}%
\begin{tabular}
[c]{cc}%
    \includegraphics[width=6.5cm]{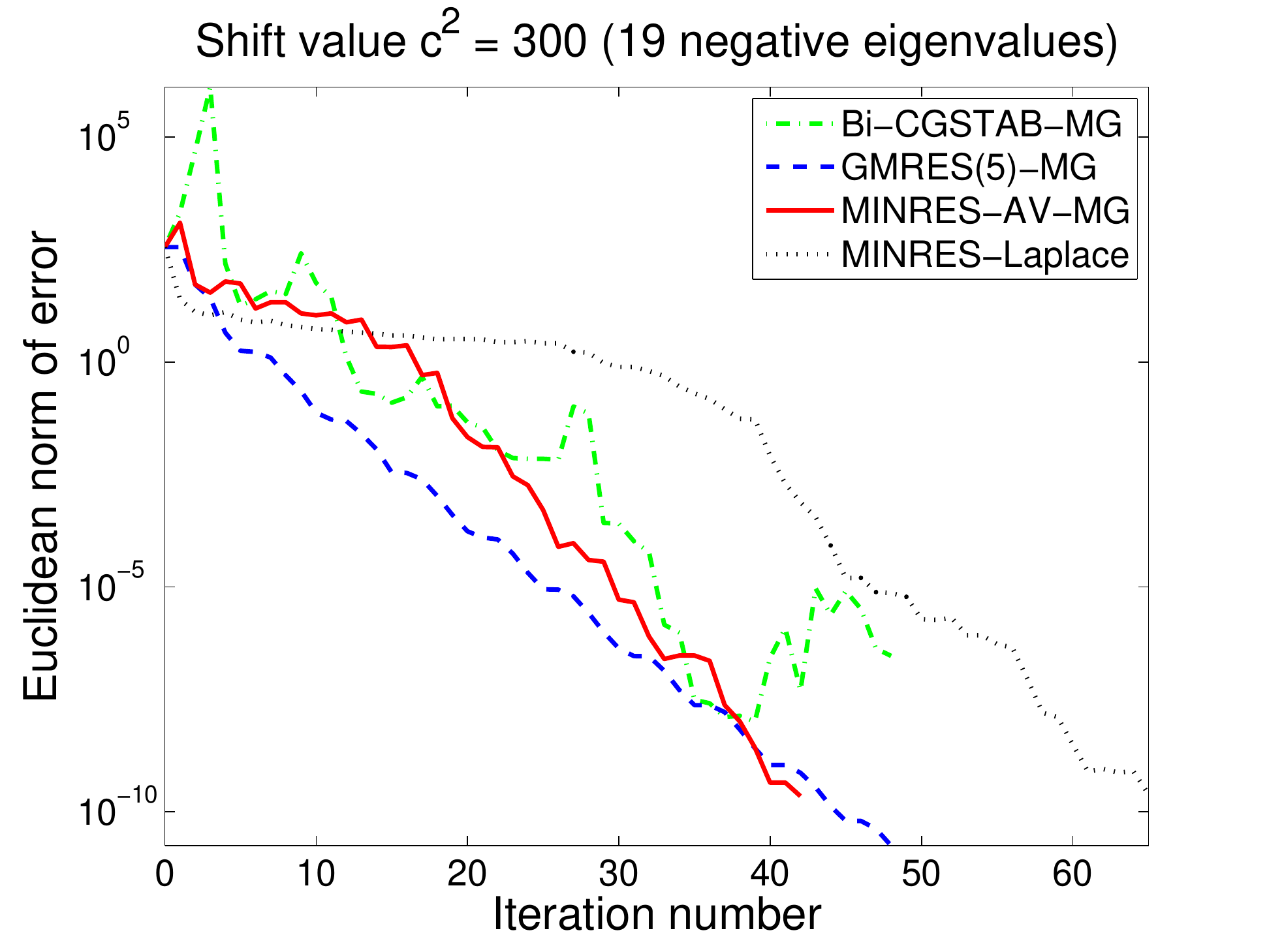}
    \includegraphics[width=6.5cm]{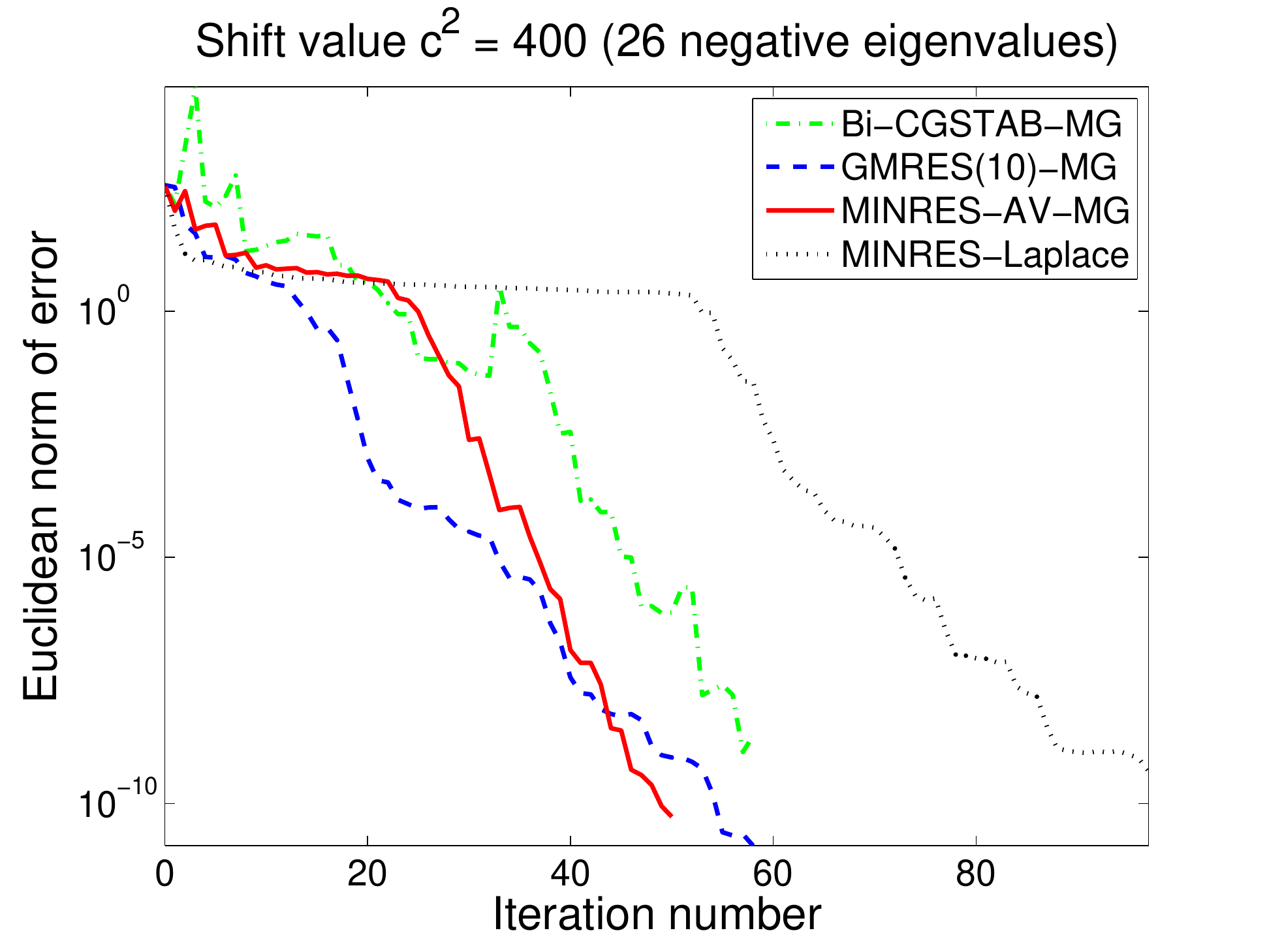} \\
    \includegraphics[width=6.5cm]{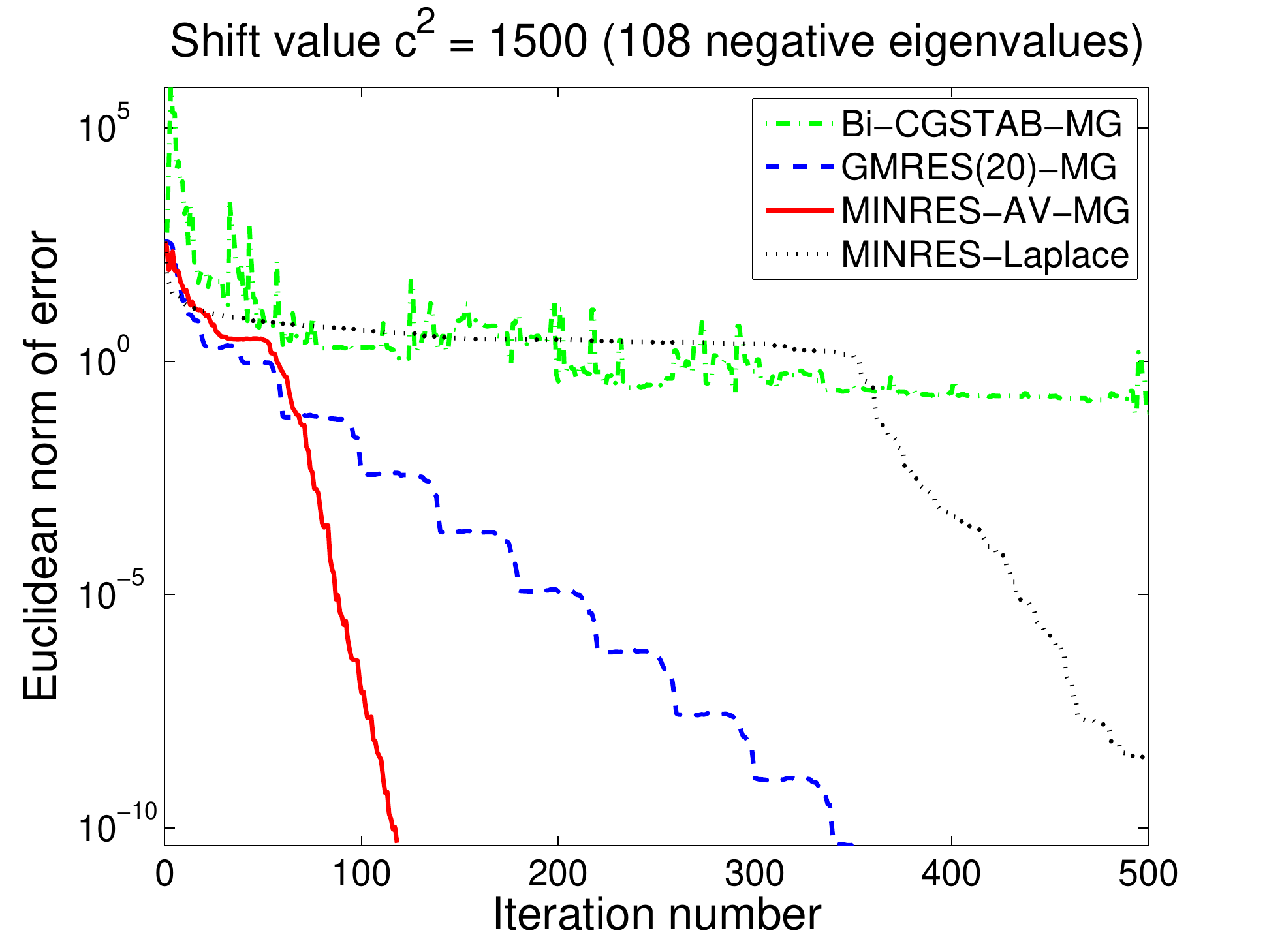}
    \includegraphics[width=6.5cm]{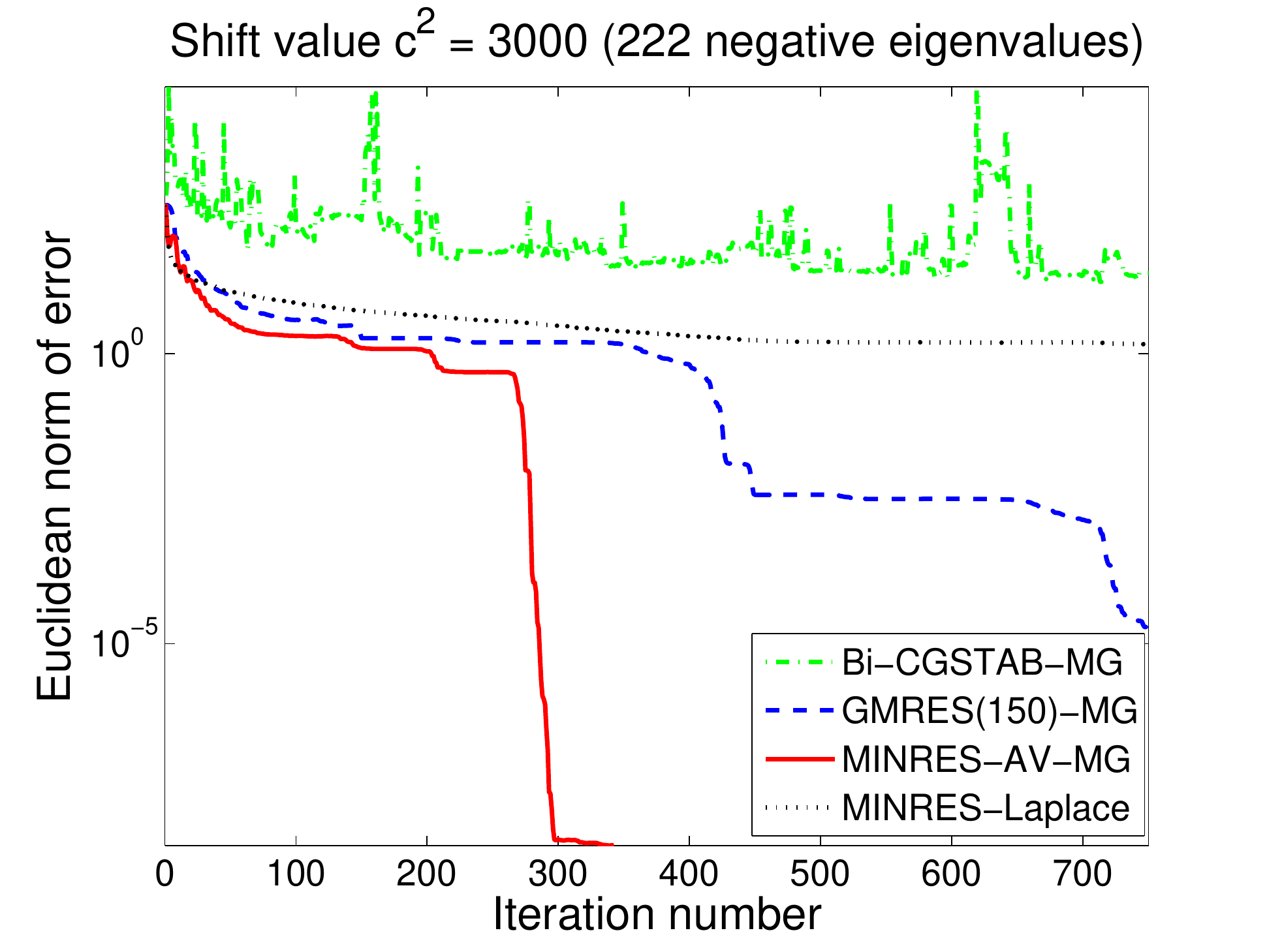}
\end{tabular}
\end{center}
\caption{Comparison of several preconditioned schemes;  
$n = 65025$.}%
\label{fig:prec}%
\end{figure}

In Figure~\ref{fig:prec}, we compare MINRES-AV-MG with the above 
introduced alternative preconditioned schemes for the model problem. 
Each plot corresponds to a different shift value. 
The restart parameter $k$ varies for all runs of GMRES($k$)-MG, increasing 
(left to right and top to bottom) as $c^2$ grows from $300$ to $3000$.
In our tests, the size $n_0$ of the coarsest problem in Algorithm~\ref{alg:avp-gmg} is $225$ (Figure~\ref{fig:prec}, top) 
and $961$ (Figure~\ref{fig:prec}, bottom);
see Table~\ref{tbl:size} with $\delta = 1$. The same 
$n_0$ is used for the MG preconditioner in the corresponding runs of GMRES($k$)-MG and Bi-CGSTAB-MG.

%The values $k$ in Figure~\ref{fig:prec} have been chosen to compromise between the storage expenses of
%GMRES($k$)-MG and its convergence behavior.    
%For a fair comparison, we set $k$ to be sufficiently small, so that the storage
%required for GMRES($k$)-MG is as close as possible to that of MINRES-AV-MG,
%while the convergence of the method is not lost. 
%%
%Since Bi-CGSTAB-MG is based on a short-term recurrence, its storage expenses
%are similar to MINRES-AV-MG.  

Figure~\ref{fig:prec} shows 
that MINRES-AV-MG noticeably outperforms
PMINRES with the inverted Laplacian preconditioner.
For smaller shifts ($c^2 = 300, \: 400$), MINRES-AV-MG is comparable, in terms
of the iteration count, to GMRES($k$)-MG and Bi-CGSTAB-MG; $k = 5, \: 10$. 
For larger shifts ($c^2 = 1500, \: 3000$), however, 
MINRES-AV-MG provides a superior convergence behavior. 
In particular, the scheme exhibits faster convergence than
GMRES($k$)-MG under less demanding storage requirements,
while Bi-CGSTAB-MG fails to converge~($k = 50, \: 150$).

If the polynomial approximations in Algorithm~\ref{alg:avp-gmg} appear only on sufficiently coarse 
grids and the size $n_0$ of the coarsest problem is relatively small, 
%i.e., the coarsest-level eigendecomposition is not much more expensive than a linear solve, 
then the additional costs introduced by the coarser grid computations 
of the AV MG preconditioner are     
negligible relative to the cost of operations on the finer grids, which are
the same as in the standard V-cycle for the indefinite problem. 
This means that the complexity of Algorithm~\ref{alg:avp-gmg} 
is similar to that of the MG preconditioner in GMRES($k$)-MG and Bi-CGSTAB-MG.
%
%Indeed, the runs of MINRES-AVP-MG on both plots in Figure~\ref{fig:spd_prec} 
%have been around $1.6$ times faster in wallclock time than those of MINRES-Laplace-MG, which is compatible 
%with the reduction in the iteration count. 
%
%
%We have already discussed that Algorithm~\ref{alg:avp-gmg} has essentially 
%the same cost as the V-cycle for the Poisson equation used as a preconditioner
%in MINRES-Laplace-MG, provided that the grids corresponding to $ch_l \geq \delta$ are sufficiently
%coarse. 
%
%The same considerations can be applied to show that the complexity 
%of Algorithm~\ref{alg:avp-gmg} is similar to 
%the one of the indefinite MG preconditioner in GMRES($k$)-MG.
 
To be more precise, in the tests reported in Figure~\ref{fig:prec} (bottom),
a single application of Algorithm~\ref{alg:avp-gmg} has required $15$--$20\%$
more time than the indefinite MG preconditioner, even though the polynomial approximations
in Algorithm~\ref{alg:avp-gmg} have been constructed for problem sizes as large as $16129$ if $c^2 = 3000$,
which is of the same order as the original problem size $n$.
For larger problem sizes, the time difference becomes negligible.  
For example if $h = 2^{-9}$ ($n = 261121$), Algorithm~\ref{alg:avp-gmg} results in only $5\%$
time increase, 
%compared to the indefinite MG preconditioner, 
and the relative time difference becomes 
indistinguishable for smaller $h$. 
The application of all the MG preconditioners in Figure~\ref{fig:prec} (top) 
required essentially the same time.  

The above discussion suggests that our numerical comparison, based on the number of iterations, is representative.
Additionally, in Table~\ref{tbl:time} we provide the time comparisons for the MG preconditioned
schemes. In particular, we measure the actual time required
by the runs of MINRES-AV-MG ($t_{AV}$), GMRES($k$)-MG ($t_G$), and Bi-CGSTAB-MG ($t_B$)
in Figure~\ref{fig:prec}, and report the speed-ups. 

\begin{table}[ht]
\caption{
Time comparison of the MG preconditioned schemes in Figure~\ref{fig:prec}.}
\label{tbl:time}
\begin{center}
\renewcommand{\arraystretch}{1.2}
\begin{tabular}{|c|c|c|c|c|}  \hline 
               & $c^2 = 300$ & $c^2 = 400$  &  $c^2 = 1500$ & $c^2 = 3000$\\ \hline \hline
$t_B/t_{AV}$ & $1.1$       & $1.1$        & $-$       &  $-$       \\ \hline
$t_G/t_{AV}$ & $1.4$       & $1.3$        & $2.6$       &  $1.9$       \\ \hline
\end{tabular}
\vspace{.1cm}
\end{center}
\end{table}

We have observed that the performance of
GMRES($k$)-MG can be improved by increasing the restart parameter. 
In Figure~\ref{fig:prec}, however, the values $k$ have been chosen to balance between storage 
%expenses of GMRES($k$)-MG and its 
and convergence behavior.    
%The values $k$ in Figure~\ref{fig:prec} 
%have been chosen to compromise between the storage expenses of
%GMRES($k$)-MG and its convergence behavior.    
%For a fair comparison, 
In particular, we set $k$ to be sufficiently small, so that the storage
required for GMRES($k$)-MG is as close as possible to that of MINRES-AV-MG,
while the convergence of the method is not lost. 
Since Bi-CGSTAB-MG is based on a short-term recurrence, its storage expenses
are similar to that MINRES-AV-MG.

The unsatisfactory performance of GMRES($k$)-MG in Figure~\ref{fig:prec} can, in part, 
be attributed to the observation that smoothing based on Richardson's (or, more generally, Jacobi) 
iteration becomes increasingly unstable as grids coarsen.
In particular, as shown in~\cite{Elman.Ernst.OLeary:01}, on the intermediate levels with $ch_l \geq 1/2$
this smoothing scheme strongly amplifies the smooth error eigenmodes.
A straightforward remedy is to invoke the coarse grid solve 
%on a sufficiently fine grid, i.e., 
on the largest grid that fails to satisfy $ch_l < 1/2$.

%We have observed that the performance of
%GMRES($k$)-MG can be improved by increasing either the restart parameter
%or the size of the coarsest problem. 
%%
%However, for $n$ sufficiently large, the former may not be feasible, while the latter
%increases the cost of the preconditioning.

% -------- begin NEW TABLES ------------------

%%
%\begin{table}[ht]
%\caption{
%Number of iterations.}
%%Time comparison of the MG preconditioned schemes in Figure~\ref{fig:prec}.}
%\label{tbl:fgmres_iter}
%\begin{center}
%\renewcommand{\arraystretch}{1.2}
%\begin{tabular}{|c|c|c|c|c|c|c|}  \hline 
%               & MINR-AV      & FG(5)      & FG(10)     & FG(20)      & FG(25)      & FG(35)\\ \hline \hline
%$c^2 = 1500$   & $80$     & $27(180)$  & $12(427)$  &  $12(296)$  & $12(296)$   & $12(296)$  \\ \hline
%$c^2 = 3000$   & $257$    & $-$     & $-$        &  $-$           & $147(?)$       & $72$      \\ \hline
%$c^2 = 4000$   & $290 (383)$  & $-$     & $-$        &  $-$      & $834$       & $239$      \\ \hline
%\end{tabular}
%\vspace{.1cm}
%\end{center}
%\end{table}

\begin{table}[ht]
\caption{
Number of iterations of MINRES-AV-MG (MINR) and GMRES($k$)-MG (GMR($k$)) required to reduce 
the initial error by $10^{-8}$; $n = 65025$. The preconditioner in GMRES($k$)-MG uses Richardson's smoothing on levels
$ch_l<1/2$ and invokes the coarse grid solve on the level that follows. Numbers in the parentheses correspond to
the right preconditioned GMRES($k$)-MG. Dash denotes that the method failed to converge within $1000$ steps.}
%Time comparison of the MG preconditioned schemes in Figure~\ref{fig:prec}.}
\label{tbl:early_coarse}
\begin{center}
\renewcommand{\arraystretch}{1.2}
\begin{tabular}{|c|c|c|c|c|c|c|}  \hline 
               & MINR   & GMR(5)          & GMR(10)     & GMR(20)       & GMR(25)     & GMR(35) \\ \hline \hline
$c^2 = 1500$   & $89$      & $29 (44)$        & $20 (22)$    & $16 (18)$      & $16 (18)$    &  $16 (18)$  \\ \hline
$c^2 = 3000$   & $282$     & $-  (-)$         & $- (-)$      &  $- (-)$       & $223 (269)$  &  $69 (69)$      \\ \hline
$c^2 = 4000$   & $310$       & $- (-)$          & $- (-)$      &  $- (-)$       & $- (-)$      &  $395 (471)$      \\ \hline
\end{tabular}
\vspace{.1cm}
\end{center}
\end{table}

% -------- end NEW TABLES ------------------

In Table~\ref{tbl:early_coarse}, we compare MINRES-AV-MG and GMRES($k$)-MG with different
values of the restart parameter. 
We report the iteration counts required to reduce the initial error by $10^{-8}$ for systems with 
$c^2 = 1500, 3000$, and $4000$. 
The indefinite MG preconditioner in GMRES($k$)-MG is configured to run 
Richardson's smoothing on grids $ch_l < 1/2$ and perform the coarse grid solve on the level 
that follows. We test the indefinite MG preconditioner in the left (used so far) 
and right preconditioned versions of GMRES($k$).
  
The above described setting of the right preconditioned GMRES($k$)-MG 
represents a special case of the Helmholtz solver introduced in~\cite{Elman.Ernst.OLeary:01}.
In this paper, 
instead of the ``early'' coarse grid solve, the MG preconditioning scheme performs further coarsening
on levels $ch_l \geq 1/2$ with GMRES used as a smoother. Since the resulting preconditioner
is nonlinear, (the full) FGMRES~\cite{Saad:93} is used for outer iterations.

It is clear that the results in Table~\ref{tbl:early_coarse} 
provide an insight into the best possible outer iteration counts that can be expected from the \textit{restarted} 
version of the method in~\cite{Elman.Ernst.OLeary:01}. In fact, the same observation is used by the authors
of~\cite{Elman.Ernst.OLeary:01}
for hand-tuning the smoothing schedule of their preconditioner to study its best-case performance.
%
%replace the coarse grid solve by further coarsening with GMRES used 
%%a Krylov subspace method
%as a smoother. In this case, the coarsest grid
%%, where the direct solve is performed, 
%can contain only a few nodes. On levels $ch_l < 1/2$, Richardson's (Jacobi) iteration is left as a
%
Table~\ref{tbl:early_coarse} then demonstrates that, regardless of the smoothing schedule and 
the smoother's stopping criterion,
%
%, even with the ``early''
%coarse grid solve, GMRES($k$)-MG 
the (F)GMRES based solvers, such as, e.g., the one in~\cite{Elman.Ernst.OLeary:01}, 
require increasing storage
%exhibit an increasing storage requirement 
to maintain the convergence as $c^2$ grows,
whereas the robustness of
MINRES-AV-MG is not lost under the minimalist memory~costs.

Thus, if the shift is large and the amount of storage is limited, so that $k$ is forced to be sufficiently small, 
the (F)GMRES($k$) outer iterations may fail to converge within a reasonable number of steps, even if the coarse grid solve 
in the MG preconditioner is performed ``early'' . 
We note, however, that if storage is available or the shifts
are not too large, the (F)GMRES based methods  
%schemes, similar to the one in~\cite{Elman.Ernst.OLeary:01} 
may represent a valid~option. 
%see Table~\ref{tbl:early_coarse} for $c^2 = 1500$.    

%It is clear that the use of the indefinite MG preconditioner 
%%with the ``early'' coarse grid solve, 
%%on the level following the grids with $ch_l < 1/2$, 
%as in Table~\ref{tbl:early_coarse} is generally not suitable for practical computations because
%the size of the coarsest problem can be 
%%too large.
%prohibitively large.
%%
%This issue has been addressed in~\cite{Elman.Ernst.OLeary:01}, where the authors 
%%suggest to 
%replace the coarse grid solve by further coarsening with GMRES used 
%%a Krylov subspace method
%as a smoother. In this case, the coarsest grid
%%, where the direct solve is performed, 
%can contain only a few nodes. On levels $ch_l < 1/2$, Richardson's (Jacobi) iteration is left as a
%smoother. Since the resulting preconditioner is nonlinear, the outer GMRES iterations are replaced
%with~FGMRES~\cite{Saad:93}.    
%  
%The performance of the method in~\cite{Elman.Ernst.OLeary:01} significantly depends on the stopping
%criterion for the GMRES smoothing.
%However, 
%the results for the right preconditioned 
%GMRES($k$)-MG 
%%with the ``early'' coarse grid solve
%in Table~\ref{tbl:early_coarse}
%provide an indication of the best possible FGMRES iteration counts that can be expected from the restarted version of the scheme in~\cite{Elman.Ernst.OLeary:01},
%regardless of the stopping rule. 
%Therefore, the above conclusions for GMRES($k$)-MG, motivated by Table~\ref{tbl:early_coarse}, also apply to the method in~\cite{Elman.Ernst.OLeary:01}
%with restarts.  

%Although not reported in this paper due to space limitations, 
We have also tested the \textit{Bunch-Parlett factorization}~\cite{Gill.Murray.Ponceleon.Saunders:92}  
as a coarse grid solve in the MG framework. 
In particular, as a preconditioner for MINRES, 
we have used Algorithm~\ref{alg:avp-gmg} with $B_l = L_l$ on all levels and the coarsest-grid 
absolute value in~(\ref{eqn:mg-cgc-1}) replaced by the application of the ``perfect''
Bunch-Parlett factorization based preconditioner~\cite{Gill.Murray.Ponceleon.Saunders:92}. 
We have obtained results that are inferior to
%MINRES-AV-MG, MINRES-Laplace, and MINRES-Laplace-Shift
the schemes considered in this paper
for shifts not too small, e.g., for $c^2 > 200$ if $h = 2^{-8}$.  
The unsatisfactory behavior may be related to the fact that the inverted Laplacian
$T = L^{-1}$ and the ideal absolute value $T = \left| L - c^2 I \right|^{-1}$ preconditioners
share the same eigenvectors with $A = L - c^2 I$, while the %``perfect''
preconditioner from~\cite{Gill.Murray.Ponceleon.Saunders:92} does not.

The standard MG preconditioners, as in GMRES($k$)-MG and Bi-CGSTAB-MG, 
are known to have optimal costs, linearly proportional to $n$. 
As discussed above, the same is true for the AV 
preconditioner in Algorithm~\ref{alg:avp-gmg}.  
Therefore, if, in addition, the number of iterations in the iterative solver preconditioned with
Algorithm~\ref{alg:avp-gmg} does not depend on the problem size, the overall 
scheme is optimal.

%\begin{table}[ht]
%\begin{center}
%\renewcommand{\arraystretch}{1.2}
%\begin{tabular}{|c|c|c|c|c|c|c|}  \hline 
%            & $h = 2^{-5}$ & $h = 2^{-6}$ & $h = 2^{-7}$ & $h = 2^{-8}$ & $h = 2^{-9}$ & $h = 2^{-10}$ \\ \hline \hline
%$c^2 = 100$ & $14$ & $14$ & $15$       & $14$         & $14$         & $14$          \\ \hline
%$c^2 = 200$ & $21$ & $21$ & $21$       & $21$         & $21$         & $21$          \\ \hline
%$c^2 = 300$ & $31$ & $32$ & $31$       & $32$         & $32$         & $30$          \\ \hline
%$c^2 = 400$ & $40$ & $40$ & $40$       & $39$         & $40$         & $40$          \\ \hline
%\end{tabular}
%\vspace{.1cm}
%\end{center}
%\caption{Mesh-independent convergence of PMINRES with the MG absolute value preconditioner.} % for the model problem.}
%\label{tbl:h_indep}
%\end{table}

\begin{table}[ht]
\caption{Mesh-independent convergence of PMINRES with the MG AV preconditioner.
The numbers in parentheses correspond $\delta = 3/4$. The default value of $\delta$ is $1/3$.} % for the model problem.}
\label{tbl:h_indep}
\begin{center}
\renewcommand{\arraystretch}{1.2}
\begin{tabular}{|c|c|c|c|c|c|c|c|}  \hline 
             & $h = 2^{-6}$  & $h = 2^{-7}$ & $h = 2^{-8}$ & $h = 2^{-9}$ &  $h =2^{-10}$ & $h = 2^{-11}$              \\ \hline \hline
$c^2 = 300$  & $31 (31)$     & $31 (31)$    & $30 (32)$    & $30 (32)$    &  $30 (32)$    &  $30 (30)$                   \\ \hline
$c^2 = 400$  & $37 (40)$     & $38 (40)$    & $37 (40)$    & $37 (40)$    &  $37 (40)$    &  $37 (39)$                \\ \hline
$c^2 = 1500$ & $67 (97)$     & $97 (119)$   & $89 (109)$   & $88(108)$    &  $89 (106)$   &  $90 (107)$              \\ \hline
$c^2 = 3000$ & $228 (229)$   & $222 (284)$  & $279(332)$   & $256(298)$   &  $257 (296)$  &  $256 (298)$             \\ \hline
\end{tabular}
\vspace{.1cm}
\end{center}
\end{table}

We verify this optimality in Table~\ref{tbl:h_indep}, which shows the mesh-independence of the convergence of
PMINRES with the MG AV preconditioner. 
% given by Algorithm~\ref{alg:avp-gmg}. 
The rows of the table correspond to the
shift values $c^2$, while the columns match the mesh size $h$. % underlying discrete problem~(\ref{eqn:helmholtz_fd}). 
The cell in the intersection
contains the numbers of steps performed to achieve the decrease by
the factor $10^{-8}$ in the error $2$-norm with the choices of the 
``switching'' parameter $\delta = 1/3$ and $\delta = 3/4$. 
%The number of steps
%corresponding to the latter value of $\delta$ is given in parentheses. 

As previously, the size of the coarsest 
grid has been set according to Table~\ref{tbl:size} with $\delta = 1$.
We conclude that the convergence does not slowdown with the decrease of $h$;
thus, PMINRES preconditioned by Algorithm~\ref{alg:avp-gmg} is optimal. 
Note that for larger shifts, $c^2 = 1500$ and $c^2 = 3000$, mesh-independent convergence
occurs for $h$ sufficiently small, when the ``switching'' pattern is stabilized, i.e.,
$B_l = L_l$ on a few finer grids and $B_l = p_{m_l}(L_l - c^2 I_l)$ on the coarser
grids that follow.

Table~\ref{tbl:h_indep} shows that as $c^2$ grows, the increase in the iteration count is mild and
essentially linear.
As expected, the smaller value of $\delta$, which leads to the construction of the
polynomial approximations earlier on finer levels, results in a higher accuracy of the AV preconditioner.

\begin{figure}[ht]
\begin{center}%
%\begin{tabular}[c]{cc}%
    \includegraphics[width = 13cm]{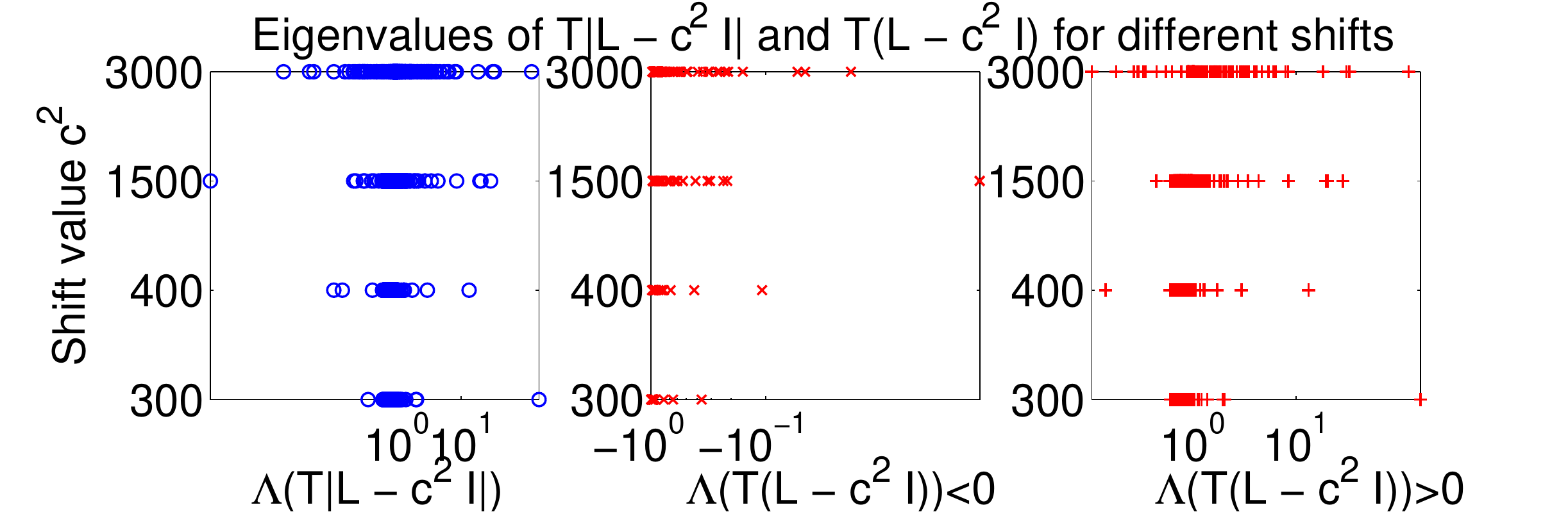}
%\end{tabular}
\end{center}
\caption{Spectrum of $T\left| L - c^2 I \right|$ (left), 
negative eigenvalues of $T\left( L - c^2 I \right)$ (center), and
positive eigenvalues of $T\left( L - c^2 I \right)$ (right); $n = 16129$.}%
\label{fig:spectr}%
\end{figure}

Finally, in Figure~\ref{fig:spectr} we plot the eigenvalues
of $T\left| L - c^2 I \right|$ and $T(L - c^2 I)$ for different shift values using the logarithmic scale; $n = 16129$.
As suggested by Corollary~\ref{cor:avp_cluster}, clusters of eigenvalues of $T\left|L - c^2 I\right|$
are preserved in the spectrum of the preconditioned matrix $T(L - c^2 I)$. 
%First, we observe that 
Almost all eigenvalues of $T(L - c^2 I)$ are clustered around $-1$ and $1$, with only a few 
%The few eigenvalues that 
falling outside of the clusters. 
%may suddenly increase the condition number 
%$\kappa(T\left|A\right|)$ with varying $h$, as seen in Table~\ref{tbl:delta}. 
%These outliers, however, do not noticeably affect PMINRES convergence, 
%as Figure~\ref{fig:spd_prec} and Table~\ref{tbl:h_indep} show.  
%
%Second, both 
We note that the clustering and the condition number $\kappa(T\left|A\right|)$ deteriorate
as $c^2$ increases from $300$ to $3000$, which is compatible with the results in Table~\ref{tbl:h_indep}. 
%This explains the slowdown of PMINRES in Figure~\ref{fig:spd_prec} and Table~\ref{tbl:h_indep} 
%with the increase of  $c^2$.  

The spectra computed in Figure~\ref{fig:spectr} allow validating numerically the 
tightness of bounds~(\ref{eqn:spectr_bounds}) in Theorem~\ref{thm:avp_spect_bounds}
for the MG AV preconditioner. In Table~\ref{tbl:bounds}, we report the number of eigenvalues $\lambda_j$
of $T(L - c^2 I)$ that satisfy either the upper or the lower bound up to machine precision. 
The table shows that the bound is numerically sharp.
\begin{table}[ht]
\caption{
Number of eigenvalues $\lambda_j$ that equal the upper/lower bound in~(\ref{eqn:spectr_bounds}) up to machine precision.}
\label{tbl:bounds}
\begin{center}
\renewcommand{\arraystretch}{1.2}
\begin{tabular}{|c|c|c|c|c|}  \hline 
              &  $c^2 = 300$ & $c^2 = 400$  &  $c^2 = 1500$ & $c^2 = 3000$\\ \hline \hline
  \mbox{Upper}            &  $0$       & $15$        & $1$       &  $115$       \\ \hline
  \mbox{Lower}            &  $0$       & $0$        & $10$       &  $0$       \\ \hline
\end{tabular}
\vspace{.1cm}
\end{center}
\end{table}

\section{Conclusions}\label{sec:conl}
We propose a new approach for SPD preconditioning for symmetric indefinite systems, based on the idea of implicitly constructing 
approximations to the inverse of the system matrix absolute value. A multigrid example of such a preconditioner is presented, for a real-valued Helmholtz problem. 
Our experiments demonstrate that PMINRES with the new MG absolute value preconditioner leads to an efficient iterative scheme, which has modest memory requirements 
and outperforms traditional GMRES based methods if available memory is tight.

%This paper presents a new approach for constructing SPD preconditioners for symmetric 
%indefinite systems.
%%
%The suggested strategy, called AV preconditioning, is based on the idea
%of constructing operators that resemble the inverse of the matrix
%absolute value. An example of a MG AV preconditioner for a real-valued Helmholtz problem
%has been constructed. 
%%
%Our experiments demonstrate that PMINRES with the new MG preconditioner
%leads to an efficient iterative scheme, which has modest memory requirements and 
%outperforms traditional GMRES based methods if only limited amount of storage is available. 
%
%
%%on %restarted GMRES with the 
%%indefinite MG preconditioning.  

\section*{Acknowledgments}
The authors thank Michele Benzi, Yvan Notay, and Joe~Pasciak 
for their comments on the draft of the manuscript.
% \vskip12pt

%\newpage

%\bibliographystyle{plainnat}
\def\refname{\centerline{\footnotesize\rm REFERENCES}}

\end{document}